\documentclass[a4paper,11pt]{article}

%Texte (typographie , couleur, encodage)
\usepackage[latin1]{inputenc}				%encodage des caractères
\usepackage[english]{babel}					%regles de typographie
\frenchspacing										%pour qu'il n'y aie pas d'espace supplementaire apres un full stop (sûrement inutile maintenant que j'ai mis la typo fr)
\usepackage{indentfirst}						%pour l'indentation du premier paragraphe de chaque section
\usepackage{color}								%pour pouvoir écrire du texte en couleur.
\usepackage{verbatim}							%pour pouvoir utiliser l'environnement comment (begin/end{comment})

%Mise en page globale.
%\usepackage[top=3.6cm, bottom=5.3cm, left=4.4cm, right=4.4cm]{geometry}		%Marges d'origine.
%\usepackage[top=3.5cm, bottom=4.5cm, left=2.5cm, right=2.5cm]{geometry}		%Marges perso.
\usepackage[top=3.2cm, bottom=4.4cm, left=2.5cm, right=2.5cm]{geometry}		%Marges perso.

%pour les maths
\usepackage{amsmath}
\usepackage{amsthm}							%package pour les styles des énoncés
\usepackage{amsfonts}							%pour les polices mathematiques vraisemblablement	
\usepackage{amssymb}							%pour les symboles vraisemblablement	

%Définition des théorèmes
\newtheorem{definition}{Definition}
\newtheorem{theorem}[definition]{Theorem}
\newtheorem{proposition}[definition]{Proposition}

\theoremstyle{definition}
\newtheorem{remark}[definition]{Remark}

%Informations-page de garde
\author{	Arnaud LIONNET \\ 
				\small  Oxford-Man Institute and Mathematical Institute \\
		         \small  University of Oxford, UK\\ 
    		 }
\title{Some results on general quadratic reflected BSDEs driven by a continous martingale}
\date{} %\today

%Quelques commandes personnelles :
															%Note for myself (à cacher au besoin)
																			%Note (tout public)
											%Note en bleu (tout public)
											%Important
											%warning (à completer, erreur, etc).
																			%d droit dans les intégrales
		%symbole d'indépendance
							%symbole d'indépendance (suite et fin)
\newcommand{\N}{\mathbb{N}}																				%natural numbers
																				%rational numbers
\newcommand{\R}{\mathbb{R}}																					%real numbers
\newcommand{\F}{\mathcal{F}}																					%for sigma-fields
																				%for sigma-fields
\newcommand{\B}{\mathcal{B}}																					%for sigma-fields
\renewcommand{\H}{\mathcal{H}}																				%S pour les espaces de (semi-)martingales
\renewcommand{\S}{\mathcal{S}}																				%H pour les espaces de processus continus
\newcommand{\Exp}{\mathcal{E}}																				%E majuscule rond pour les exponentielles stochastiques
\newcommand{\abs}[1]{\lvert#1\rvert }																		%commande pour la valeur absolue
\newcommand{\norm}[1]{\lVert#1\rVert}																	%commande pour norme
														%commande pour la valeur absolue
\newcommand{\Norm}[1]{\left\lVert#1\right\rVert}													%commande pour norme
\newcommand{\half}{\frac{1}{2}}																				%pour écrire 1/2
									%pour écrire des matrices, sans devoir se taper des \begin or \end{environment} à chaque fois
\newcommand{\bracket}[1]{\langle #1 \rangle}															%crochet des martingales
\DeclareMathOperator*{\esssup}{ess\, sup}																%definition de l'opérateur ess sup
\newcommand{\Et}[1]{E\bigg( #1 \Big| \F_t \bigg)}													%commande pour espérance sachant F_t

\begin{document}
\maketitle
%%%%%%%%%%%%%%%%%%%%%%%%%%%%%%%%%%%%%%%%%%%%%%%%%%%%%%%%%%%%%%%%%%%%%%%%%%%%%%%%%%%%%%%%%%%%%%%%%%%%%%%%%%%%%%%%%%%%%%%%%%%%%%%%%%%%%%%%%%%%%%%%%%%
%%%%%%%%%%%%%%%%%%%%%%%%%%%%%%%%%%%%%%%%%%%%%%%%%%%%%%%%%%%%%%%%%%%%%%%%%%%%%%%%%%%%%%%%%%%%%%%%%%%%%%%%%%%%%%%%%%%%%%%%%%%%%%%%%%%%%%%%%%%%%%%%%%%
%%%%%%%%%%%%%%%%%%%%%%%%%%%%%%%%%%%%%%%%%%%%%%%%%%%%%%%%%%%%%%%%%%%%%%%%%%%%%%%%%%%%%%%%%%%%%%%%%%%%%%%%%%%%%%%%%%%%%%%%%%%%%%%%%%%%%%%%%%%%%%%%%%%
%%%%%%%%%%%%%%%%%%%%%%%%%%%%%%%%%%%%%%%%%%%%%%%%%%%%%%%%%%%%%%%%%%%%%%%%%%%%%%%%%%%%%%%%%%%%%%%%%%%%%%%%%%%%%%%%%%%%%%%%%%%%%%%%%%%%%%%%%%%%%%%%%%%
%%%%%%%%%%%%%%%%%%%%%%%%%%%%%%%%%%%%%%%%%%%%%%%%%%%%%%%%%%%%%%%%%%%%%%%%%%%%%%%%%%%%%%%%%%%%%%%%%%%%%%%%%%%%%%%%%%%%%%%%%%%%%%%%%%%%%%%%%%%%%%%%%%%
\abstract{
We study the well-posedness of general reflected BSDEs driven by a continuous martingale, when the coefficient $f$ of the driver has at most quadratic growth in the control variable $Z$, with a bounded terminal condition and a lower obstacle which is bounded above. We obtain the basic results in this setting : comparison and uniqueness, existence, stability. 
For the comparison theorem and the special comparison theorem for reflected BSDEs (which allows one to compare the increasing processes of two solutions), we give intrinsic proofs which do not rely on the comparison theorem for standard BSDEs. This allows to obtain the special comparison theorem under minimal assumptions.
We obtain existence by using the fixed point theorem and then a series of perturbations, first in the case where $f$ is Lipschitz in the primary variable $Y$, and then in the case where $f$ can have slightly-superlinear growth and the case where $f$ is monotonous in $Y$ with arbitrary growth. 
We also obtain a local Lipschitz estimate in $BMO$ for the martingale part of the solution.
}

\ \\
\textbf{Mathematical Subject Classification :} 60H10, 60H30 \\
\textbf{Key words :} reflected BSDEs, quadratic growth, BMO, continuous-martingale setting, perturbations.

%\tableofcontents

%%%%%%%%%%%%%%%%%%%%%%%%%%%%%%%%%%%%%%%%%%%%%%%%%%%%%%%%%%%%%%%%%%%%%%%%%%%%%%%%%%%%%%%%%%%%%%%%%%%%%%%%%%%%%%%%%%%%%%%%%%%%%%%%%%%%%%%%%%%%%%%%%%%
%%%%%%%%%%%%%%%%%%%%%%%%%%%%%%%%%%%%%%%%%%%%%%%%%%%%%%%%%%%%%%%%%%%%%%%%%%%%%%%%%%%%%%%%%%%%%%%%%%%%%%%%%%%%%%%%%%%%%%%%%%%%%%%%%%%%%%%%%%%%%%%%%%%
%%%%%%%%%%%%%%%%%%%%%%%%%%%%%%%%%%%%%%%%%%%%%%%%%%%%%%%%%%%%%%%%%%%%%%%%%%%%%%%%%%%%%%%%%%%%%%%%%%%%%%%%%%%%%%%%%%%%%%%%%%%%%%%%%%%%%%%%%%%%%%%%%%%
\section{Introduction}		

Since Pardoux and Peng initiated the systematic study of (non-linear) backward stochastic differential equations (BSDEs thereafter) in \cite{PardouxAndPeng}, these equations have been proved useful for a number of areas. They are intimately linked with the stochastic version of the maximum principle for stochastic optimal control problems, they provide probabilistic representations for the solutions to some partial differential equations (PDEs thereafter), and they are natural in finance where, beyond coming in through the use of the above fields, they provide a natural language to express the replication of a European option. 
Reflected BSDEs (RBSDEs) have applications in the same areas, where they are linked to optimal stopping, PDEs with obstacle (variational inequalities) and American options.
Consequently, a significant effort has been directed to studying the conditions under which these equations are well-posed.

``Reflected'' BSDEs are in fact BSDEs subject to a constraint : the solution process $Y$ is required to remain above a lower obtacle $L$. In order to achieve this, it is necessary to add to the usual dynamics $dY_s=-f_sds + Z_sdW_s$ a ``force'' $dK$ that drives $Y$ upward. One wants that extra term to be minimal, so that $K$ is only active to prevent $Y$ from passing below the obstacle $L$. This optimality condition (known as the Skorohod condition) is often expressed as $\int_0^T 1_{\{Y_s>L_s\}}dK_s =0$. So a reflected BSDE takes the following form :
	\begin{align} \label{equation-RBSDE.in.Brownian.setting}
		\left\{ \begin{aligned}
			dY_s &= - f(s,Y_s,Z_s)ds - dK_s + Z_s dW_s , \\
			Y_T &=\xi , \\
			Y_t &\ge L_t \ \text{for all $t \in [0,T]$} , \\
			K &\ \text{is continuous, increasing, starts from $0$ and}\ \int_0^T 1_{\{Y_s>L_s\}}dK_s=0	,
		\end{aligned} \right.		
	\end{align}
where the solution to be determined is now the triple $(Y,Z,K)$.

\paragraph*{}
Reflected BSDEs were introduced by El Karoui, Kapoudjian, Pardoux, Peng and Quenez in \cite{EKnKnPnPnQ}. These authors considered the case where $f$ is Lipschitz, the terminal condition is square-integrable and the lower obstacle a continuous square-integrable semimartingale, the natural extension of Pardoux and Peng \cite{PardouxAndPeng}.
The first results for quadratic BSDEs (that is, when $f$ is allowed to have a quadratic growth in $z$) were obtained by Kobylanski in \cite{Kobylanski}, under the assumption that $\xi$ is bounded and $f$ Lipschitz in $y$. Lepeltier and San Martin \cite{LepeltierandSanMartin98} allowed $f$ to have slightly superlinear growth in $y$. Kobylanski, Lepeltier, Quenez and Torrès \cite{KobylanskiAndLepeltierAndQuenezAndTorres} were then able to prove the analogue results for RBSDEs by extending the techniques of \cite{Kobylanski} and \cite{LepeltierandSanMartin98}. 
The coefficient $f$ can have any growth in the $y$ variable if it satisfies the monotonicity condition (see for instance Pardoux \cite{Pardoux}), which is encountered in reaction-diffusion equations. Briand, Lepeltier and San Martin \cite{BriandAndLepeltierAndSanMartin} studied the case of quadratic growth BSDEs with such an assumption, and this was then extended to reflected BSDEs by Xu \cite{Xu}.
Briand and Hu \cite{BriandAndHu2006,BriandAndHu2008} extended Kobylanski's results \cite{Kobylanski} to the case of an unbounded terminal condition, and this case was further studied in the recent works of Delbaen, Hu and Richou \cite{DelbaenAndHuAndRichou2011, DelbaenAndHuAndRichou2013} and Barrieu and El Karoui \cite{BarrieuAndElKaroui}.  
Lepeltier and Xu \cite{LepeltierAndXu} could then treat the case of RBSDEs with unbounded $\xi$, while Bayraktar and Yao \cite{BayraktarAndYao} removed the condition that $L$ be bounded. 
Work has also been done regarding the regularity of the obstacle $L$, for instance Peng and Xu \cite{PengAndXu} worked with $L^2$ obstacles.
However, the case of quadratic BSDEs remains significantly more difficult than that of Lipschitz BSDEs, and the methods used initially are often quite involved. 
Recently, Tevzadze \cite{Tevzadze} and Briand and Elie \cite{BriandAndElie} gave simpler approaches for the case when $\xi$ is bounded.

The above works concerning the well-posedness of reflected BSDEs considered a Brownian setting. However, BSDEs have been studied in a general martingale setting (see El Karoui and Huang \cite{ElKarouiAndHuang}, Tevzadze \cite{Tevzadze}, Morlais \cite{Morlais}, Barrieu and El Karoui \cite{BarrieuAndElKaroui}), and in a general filtered probability space in Cohen and Elliott \cite{CohenAndElliott}. 
In this paper, we obtain the well-posedness of a general class of quadratic RBSDEs driven by a continuous martingale and with a bounded terminal condition in a simple, self-contained way. We show the existence of solutions in the cases where the dependence of $f$ in $y$ is Lipschitz, slightly-superlinear or monotone with arbitrary growth. We also obtain the special comparison theorem for the increasing processes under minimal assumptions. Finally, we obtain a local Lipschitz estimate in BMO for the martingale part of the solution.

\paragraph*{}
In section \ref{section-comparisontheorem}, 
we first obtain the standard comparison theorem in our setting, using a linearization and the BMO argument from Hu, Imkeller and Müller \cite{HuAndImkellerAndMuller}, as opposed to via an optimal stopping representation and comparison for BSDEs (see \cite{KobylanskiAndLepeltierAndQuenezAndTorres}). 
We note that this result, which guarantees uniqueness, holds naturally for $f$ only locally Lipschitz in $y$, instead of globally Lipschitz as often assumed (\cite{BriandAndElie}, \cite{Tevzadze}).
We then prove the special comparison theorem for reflected BSDEs, which allows one to compare the increasing processes when one RBSDE solution dominates another. 
This theorem was first proved in Hamadène, Lepeltier and Matoussi \cite{HamadeneAndLepeltierAndMatoussi}, and reused in Peng and Xu \cite{PengAndXu}. In the papers (\cite{HamadeneAndLepeltierAndMatoussi}, \cite{PengAndXu}, \cite{LepeltierAndMatoussiAndXu}, \cite{KobylanskiAndLepeltierAndQuenezAndTorres}) where it appears, the proof always relies on the penalization approach to reflected BSDEs and the comparison theorem for standard BSDEs, comparing quantities which, at the limit, become the increasing processes. 
The statement and the new proof we provide here are more intimately related to the nonlinear Snell envelope approach reflected BSDEs and hold under minimal assumptions. In particular, because we don't rely on a comparison theorem, they hold without the regularity assumptions usually made on $f$. 

In section \ref{section-existenceandstability} we prove the existence of solutions to the reflected BSDEs when $f$ is quadratic in $z$ an Lipchitz in $y$. To this end, we generalize the technique introduced by Tevzadze \cite{Tevzadze} for BSDEs.
The idea there is to first use the fixed point theorem to obtain a solution to a quadratic BSDEs when $f(\cdot,0,0)$ and $\xi$ are sufficiently small ($f(s,0,0)ds$ is the residual drift, that drives the solution even if $(Y_s,Z_s)=(0,0)$), and then to build upon this partial result to obtain a solution for general $f(\cdot,0,0)$ and $\xi$.

This technique can be understood as a type of ``vertical'' splitting and recombination, and is in that sense an analogue to what is done for Lipschitz BSDEs. In that classical case, if one works with the natural norm on the space where one looks for solutions, which in that context is the space of square-integrable processes, one finds that the fixed point theorem applies if the time interval is small enough. A natural way to use this is then to split (``horizontally'') a general time interval into pieces small enough that one can obtain a solution on each interval, and patch them together to obtain a solution on the whole interval. For quadratic BSDEs, since one can apparently solve the BSDE only for small data, the idea is to split a general set of data into pieces small enough that one can obtain a solution for each piece, and then combine them to obtain a solution to the initial problem (see \cite{Touzi} and \cite{KaziTaniAndPossamaiAndZhou}). One can also understand this method as a series of perturbations. One first solves a BSDE with microscopic data, then successively solves pertubation equations and adds the associated solutions, allowing the size of the data to grow at each step. At the end, one has built a solution to the initial BSDE with macroscopic data. 

In order to use the fixed point theorem, one mainly needs to understand the underlying backward stochastic problem. For BSDEs, this underlying problem is the semimartingale decomposition. For reflected BSDEs, it is a Snell envelope problem. However, for the perturbation procedure to work well, the underlying problem should be a linear problem (for instance, it has been applied recently in Kazi-Tani, Possamai and Zhou \cite{KaziTaniAndPossamaiAndZhou} to BSDEs with jumps). This way, the equations satisfied by the perturbations are of the same nature as the equations satisfied by the solutions. This is not the case for reflected BSDEs.
%, as the underlying problem is an optimization problem, and therefore non-linear. 
It is however possible to identify the equation that a perturbation should satisfy. The obstacle cannot be perturbed during the procedure, but this can be dealt with by assuming from the start that it is negative, a case which covers all the others by a simple translation. In particular, unlike in \cite{KobylanskiAndLepeltierAndQuenezAndTorres}, we don't need $L$ to be bounded but only require it to be upper bounded.

We then study the stability of the solution with respect to changes in the terminal condition $\xi$ and in the residual drift $f(\cdot,0,0)$. 
We obtain for the martingale part of the solution a local Lipschitz estimate in the space $BMO$. Global Lipschitz bounds in $\H^p$ were obtained already in Briand, Delyon, Hu, Pardoux and Stoica \cite{BriandAndDelyonAndHuAndPardouxAndStoica} (see also Briand and Confortola \cite{BriandAndConfortola}, Ankirchner, Imkeller and Dos Reis \cite{AnkirchnerAndImkellerAndDosReis}). Kazi-Tani, Possamai and Zhou \cite{KaziTaniAndPossamaiAndZhou} provide a global $\half$-Hölder estimate in the smaller space $BMO$. Here, we can obtain a stronger regularity for small perturbations, essentially by bootstrapping a weaker regularity result.

Finally, in section \ref{section-generalisation}, we extend the scope of the existence theorem of section \ref{section-existenceandstability}. 
In that latter case, $f$ is Lipschitz in $y$ and the sequence of perturbations described above can be performed uniformly without problem. However, when the first derivative $f_y$ is not a bounded function of $y$, the maximal allowed size for a perturbation depends on the size of the solution to the reflected BSDE that one wants to perturb, so it is not clear \textit{a priori} that the procedure would terminate after finitely many perturbations. We show, however, that this is the case as soon as one can obtain an \textit{a priori} bound for $Y$ in $\S^\infty$.
We can therefore extend the existence theorem of section \ref{section-existenceandstability} to the case where $f$ is slightly-superlinear and to the case where $f$ is monotone with arbitrary growth (as studied respectively by \cite{KobylanskiAndLepeltierAndQuenezAndTorres} and \cite{Xu} in a Brownian setting), using the same perturbation technique.

In the following section, we motivate the general continuous-martingale setting, and specify the notation and the framework that will be used throughout the paper.

%%%%%%%%%%%%%%%%%%%%%%%%%%%%%%%%%%%%%%%%%%%%%%%%%%%%%%%%%%%%%%%%%%%%%%%%%%%%%%%%%%%%%%%%%%%%%%%%%%%%%%%%%%%%%%%%%%%%%%%%%%%%%%%%%%%%%%%%%%%%%%%%%%%
%%%%%%%%%%%%%%%%%%%%%%%%%%%%%%%%%%%%%%%%%%%%%%%%%%%%%%%%%%%%%%%%%%%%%%%%%%%%%%%%%%%%%%%%%%%%%%%%%%%%%%%%%%%%%%%%%%%%%%%%%%%%%%%%%%%%%%%%%%%%%%%%%%%
%%%%%%%%%%%%%%%%%%%%%%%%%%%%%%%%%%%%%%%%%%%%%%%%%%%%%%%%%%%%%%%%%%%%%%%%%%%%%%%%%%%%%%%%%%%%%%%%%%%%%%%%%%%%%%%%%%%%%%%%%%%%%%%%%%%%%%%%%%%%%%%%%%%
  
\section{Setting}					\label{section-setting}

\paragraph{General, continuous-martingale setting.}\ \\
The reflected BSDE (\ref{equation-RBSDE.in.Brownian.setting}) is set in a Brownian setting. Even in this setting, it is sometimes useful to consider that the reference increasing process is not the time $ds$. For instance, Pardoux and Zhang observed in \cite{PardouxAndZhang} that when looking at the BSDE associated with a semilinear parabolic PDE in a domain with Neumann boundary conditions (so-called ``generalized BSDEs''), the drift term is of the form $f(Y_s,Z_s)ds + f'(Y_s)dA_s$ where $A$ is the local time of the underlying diffusion on the regular boundary, and is therefore orthogonal to the Lebesgue measure. In that case, one can enhance the increasing process by setting $dC = ds + dA$, and find $f''$ such that $f(Y_s,Z_s)ds + f'(Y_s)dA_s = f''(Y_s,Z_s)dC$ (see El Karoui and Huang \cite{ElKarouiAndHuang}). 

The reference martingale $M$ need not be a Brownian motion, and in particular need not enjoy the martingale representation property (see El Karoui, Peng and Quenez \cite{ElKarouiAndPengAndQuenez}). The martingale part $N$ of the solution then has the decompotion $N = \int ZdM + N^\perp$ on the reference martingale $M$, with $N^\perp$ orthogonal to $M$ (i.e. $\bracket{N^\perp , M}=0$). The quadratic variation $d\langle M \rangle$ of $M$ is assumed to be absolutely continuous (component-wise) with respect to $dC$ (one can always enhance $dC$ so that it becomes the case). Write then $d\langle M \rangle = a dC = \sigma \sigma^* dC$. Since $Z_s$ is uniquely determined only when no component of $d\langle M \rangle_s$ is zero, the drift term will only depend on $Z\sigma$. 
The drift is also allowed to depend on $N^\perp$ through a quadratic term $g_s d\langle N^\perp \rangle_s$ and a term $d\langle \nu,N^\perp \rangle_s$ linear in $N^\perp$.
%The orthogonal component $N^\perp$ of the martingale part $N$ is meaningful in the context of Föllmer-Schweizer strategies in incomplete markets (see \cite{ElKarouiAndPengAndQuenez}). One wants in that context to have in the drift a quadratic term $g_s d\langle N^\perp \rangle_s$. By the perturbative nature of the method used later to prove existence, one is naturally led to consider also a term $d\langle \nu,N^\perp \rangle$ linear in $N^\perp$.

So in the end, we will study the following general reflected BSDE :
	\begin{align} \label{equation-RBSDE.of.reference}
		\left\{ \begin{aligned}
			dY_s &= - dV(Y,N)_s - dK_s + dN_s ,	\\
			Y_T &=\xi  , 									\\
			Y_t &\ge L_t \text{ for all } t \le T	\text{, and}	\\
			K &\text{ increasing, continuous, starting from $0$ and such that } 1_{\{Y_s > L_s\}} dK_s=0	
		\end{aligned} \right.
	\end{align}
where the drift is given by
	\begin{align*}
		dV(Y,N)_s=f(s,Y_s,Z_s \sigma_s)dC_s + d\langle \nu, N^\perp \rangle_s + g_s d\langle N^\perp \rangle_s .
	\end{align*}
This is referred to as the reflected BSDE of data $(V,\xi,L)=(f,\nu,g,\xi,L)$.

\paragraph*{}
The framework is a filtered probability space $\big( \Omega, \F=(\F_t)_{t \in [0,\mathbb{T}]},P \big)$ satisfying the usual conditions, where $\mathbb{T} > 0$ is a finite time horizon. $T$ is an $\F$-stopping time valued in $[0,\mathbb{T}]$ (bounded stopping time).
The continuous square-integrable martingale $M$ is assumed to be $BMO$ (see below). All the processes considered are continuous.

$C$ is a continuous and progressively measurable increasing process (starting from $0$) such that, roughly, all the finite variational processes which are related to the data (not depending on the solution) are absolutely continuous with respect to it. In particular, $d\langle M \rangle_s = a_s dC_s = \sigma_s \sigma_s^* dC_s$. It is assumed that the positive symmetric matrix $a$ (or equivalently $\sigma$) is bounded away from 0 and infinity (i.e. bounded and uniformly elliptic). 

The data of the BSDE (coefficients $f,\nu,g$ of the drift $V$, terminal condition $\xi$, obstacle $L$) are as follows :
	\begin{itemize}
		\item $f : \Omega \times [0,T] \times \R \times M_{1,d}(\R) \rightarrow \R$ is $Prog \otimes \B(\R) \otimes \B(M_{1,d}(\R))$-measurable, where $M_{n,d}(\R)$ is the space of $n \times d$ matrices with entries in $\R$. $Prog = Prog(\F_T)$ is the progressively measurable sigma-field on the interval $[|0,T|]$ (the set of pairs $(\omega,t)$ such that $t \le T(\omega)$).
		\item $\nu$ is a BMO martingale orthogonal to $M$ (that is $\langle \nu,M \rangle=0$)
		\item $g$ is a progressively measurable and bounded scalar process.
		\item $\xi$ is an $\F_T$-measurable, bounded random variable.
		\item $L$ is a continuous semimartingale bounded above.
	\end{itemize}
Throughout the paper, we assume that $f$ has at most quadratic growth in the variable $z$, in the following sense :
	\begin{itemize}
		\item[($\mathbf{A_{qg}}$)] There exists a growth function $\lambda(\cdot)$ (i.e. $\lambda : \R \rightarrow \R_+$ symmetric, increasing on $\R_+$, bounded below by 1) and a positive process $h \in L^2_{BMO}$ (i.e. $\int h dM \in BMO$, see below) such that :
			\begin{align*}
				\abs{f(t,y,z)} \le \lambda(y) \big(h_t^2 + \abs{z}^2 \big) .
			\end{align*}
	\end{itemize}
The assumption as written above allows for any growth in $y$, although more specific assumptions on this are made in sections \ref{section-comparisontheorem}, \ref{section-existenceandstability} and \ref{section-generalisation}.
 
\paragraph{Solutions to the reflected BSDE.}\ \\
A solution to the reflected BSDE is generally understood as a triple $S=(Y,N,K)$ where Y is a semimartingale, N a square-integrable martingale ($\in \H^2$) and K an increasing process (starting from $0$), such that (\ref{equation-RBSDE.of.reference}) is satisfied, with $N=\int ZdM + N^\perp$.

Note that a solution can also be understood as a pair $S=(Y,N)$ such that, definining $K$ from $K_0=0$ and the dynamics equation in (\ref{equation-RBSDE.of.reference}), K is indeed found to be increasing and satisfies the Skorohod condition. This will often be what is meant by solution in the rest of the paper. 

Under the assumption of quadratic growth and bounded terminal condition, we consider only bounded solutions : $Y \in \S^\infty$. For those, $N$ is found to be a BMO martingale (see the a priori estimate of proposition \ref{proposition-aprioriestimate} below). So a solution will always be understood as being in $\S^\infty \times BMO$ ($\times \mathcal{A}$), where BMO and $\mathcal{A}$ are described below.

\paragraph{Spaces of processes, notation.}\ \\
We will make use of the following particular spaces.
	\begin{itemize}
		\item $BMO(P)$ is the space of all the BMO $P$-martingales, that is those for which the norm 
				\begin{align*}
					\norm{N}^2_{BMO(P)} = \sup_{t \in \mathcal{T}_0^T} \norm{E_P( \langle N \rangle_T - \langle N \rangle_t | \F_t)}_\infty
				\end{align*}				
			 is finite, where $\mathcal{T}_0^T$ is the set of stopping times $t$ such that $0 \le t \le T$. The mention of the measure $P$ will be omitted whenever no confusion is possible. When $X \in BMO$, then $\mathcal{E}(X)$ is a UI martingale, so one can define a measure $Q$ by stating that on $\F_t$, $\frac{dQ}{dP}=\mathcal{E}(X)_t$. Also, we will use frequently the fact that for any $N \in BMO(P)$, $\widetilde{N}=N-\bracket{X,N}$ is in $BMO(Q)$ (cf Kazamaki \cite{Kazamaki}, theorems 2.3 and 3.3). 
		\item $L^2_{BMO}$ is the space of processes $h$ such that $\int h dM \in BMO$. We equip it with the norm
			\begin{align*}
				\norm{h}_{L^2_{BMO}}^2 = \sup_t \Norm{ \Et{ \int_t^T h_s^2 dC_s } }_\infty
			\end{align*}
		\item $\mathcal{A}$ is the space of accumulators, that is : progressively measurable, continuous, increasing processes starting from $0$.
		\item $L^{\infty,2}$ is the space of processes $x$ such that $\int_0^T \abs{x_s}^2 dC_s \in L^\infty$, and $L^{\infty,1}$ is that of processes such that $\int_0^T \abs{x_s} dC_s \in L^\infty$, with norms $\norm{x}_{\infty,2}^2 = \norm{\int_0^T \abs{x_s}^2 dC_s}_\infty$ and $\norm{x}_{\infty,1} = \norm{\int_0^T \abs{x_s} dC_s}_\infty$. \\
	\end{itemize}
In the growth assumptions on $y$ made later on, we use a fixed positive process $r \in L^{\infty,2}$, which is part of the framework (like $T$, $M$, $C$). 
It is there to take into account the fact that $C_T$ might not be bounded (in facts, if $C$ incorporates a local time of a diffusion on a regular boundary, it has exponential moments but is not bounded). 
In the case where $dC_s=ds$ and $T$ is a constant, $r=1$ and $\norm{r}_{\infty,2} = \sqrt{T}$.

For processes $S=(Y,N) \in \S^\infty \times BMO(Q)$, we use the norm $\norm{S}_Q^2 := \norm{Y}_{\S^\infty}^2 + \norm{N}_{BMO(Q)}^2$.
As usual the mention of the measure $Q$ is omitted whenever no confusion arises.
By $L^\infty$ we denote the space of bounded random variables, and we use the norm $\norm{\ \ }_\infty$, whether those random variables are $\R$-valued (like $\xi$) or path-valued (like $g$).

\paragraph{BMO property for $N$.}
	\begin{proposition}			\label{proposition-aprioriestimate}
		Let $f$ satisfy $(\mathbf{A_{qg}})$, $\nu \in BMO$ and $g$ be bounded. 
		Let $Y$ be a continuous semimartingale, $N$ be a square-integrable martingale and $K$ be an increasing process such that $Y$ has the decomposition :
			\begin{align*}
					dY &= -dV(Y,N) - dK + dN	 , 
			\end{align*}
		where $dV(Y,N)_s = f(s,Y_s,Z_s \sigma_s)dC_s + d\langle \nu,N^\perp \rangle_s + g(s)d\langle N^\perp \rangle_s$.
		If $Y$ is bounded (i.e. $Y \in \mathcal{S}^\infty$), then $N \in BMO$ and $K \in \mathcal{A}_{BMO}$.
	\end{proposition}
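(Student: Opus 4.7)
The strategy is to apply Itô's formula to $\phi(Y)$ for a carefully chosen convex function $\phi$, with the aim that the second-order term $\tfrac{1}{2}\phi''(Y)\,d\langle Y\rangle$ produces a positive multiple of $|Z\sigma|^2\,dC+d\langle N^\perp\rangle$ large enough to dominate the quadratic contributions coming from the bound on $f$ and from the cross-bracket with $\nu$. The natural choice is $\phi(y)=e^{\alpha y}$, with $\alpha>0$ to be fixed, whose crucial property is $\phi''=\alpha\phi'$; since $Y\in\mathcal{S}^\infty$, both $\phi'(Y)$ and $\phi''(Y)$ are bounded above and bounded below by positive constants depending only on $\alpha$ and $\|Y\|_\infty$.

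Applied between an arbitrary stopping time $\tau\in\mathcal{T}_0^T$ and $T$, Itô's formula combined with the decomposition of $dY$ yields, after rearrangement, an identity whose left-hand side is $\tfrac{\alpha}{2}\int_\tau^T\phi'(Y_s)\bigl(|Z_s\sigma_s|^2\,dC_s+d\langle N^\perp\rangle_s\bigr)+\int_\tau^T\phi'(Y_s)\,dK_s$ and whose right-hand side collects the boundary term $\phi(Y_T)-\phi(Y_\tau)$, the drift integrals $\int_\tau^T\phi'(Y_s)\bigl(f_s\,dC_s+d\langle\nu,N^\perp\rangle_s+g_s\,d\langle N^\perp\rangle_s\bigr)$, and the stochastic integral $-\int_\tau^T\phi'(Y_s)\,dN_s$. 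Since $\phi'(Y)$ is bounded and $N\in\mathcal{H}^2$, the latter is a genuine $L^2$-martingale and disappears once I take conditional expectation given $\F_\tau$. I would then bound the three drift integrals using $(\mathbf{A_{qg}})$, giving $|f|\le\Lambda(h^2+|Z\sigma|^2)$ with $\Lambda=\lambda(\|Y\|_\infty)$; the bound $|g|\le\|g\|_\infty$; and the Kunita--Watanabe inequality in the form $|d\langle\nu,N^\perp\rangle|\le\tfrac{1}{2\epsilon}d\langle\nu\rangle+\tfrac{\epsilon}{2}d\langle N^\perp\rangle$.

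Collecting terms after these bounds, the coefficient of $\int_\tau^T\phi'(Y_s)|Z_s\sigma_s|^2\,dC_s$ becomes at least $\tfrac{\alpha}{2}-\Lambda$, and that of $\int_\tau^T\phi'(Y_s)\,d\langle N^\perp\rangle_s$ at least $\tfrac{\alpha}{2}-\|g\|_\infty-\tfrac{\epsilon}{2}$; fixing $\epsilon=1$ and taking $\alpha$ strictly larger than $2\Lambda$ and $2\|g\|_\infty+1$ makes both coefficients positive. Dividing through by the positive lower bound of $\phi'(Y)$ then yields uniform-in-$\tau$ bounds on $E\bigl[\int_\tau^T|Z_s\sigma_s|^2\,dC_s\,\big|\,\F_\tau\bigr]$, $E\bigl[\langle N^\perp\rangle_T-\langle N^\perp\rangle_\tau\,\big|\,\F_\tau\bigr]$ and $E[K_T-K_\tau\mid\F_\tau]$ in terms of $\|Y\|_\infty$, $\|h\|_{L^2_{BMO}}$, $\|\nu\|_{BMO}$ and $\|g\|_\infty$, which establishes $N\in BMO$ and $K\in\mathcal{A}_{BMO}$. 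The main point is really a bookkeeping one: a single choice of $(\alpha,\epsilon)$ must simultaneously absorb the quadratic-in-$Z$, quadratic-in-$N^\perp$, and $\nu$-cross-bracket contributions, and the identity $\phi''/\phi'\equiv\alpha$ is what keeps the inequality linear and the thresholds for $\alpha$ explicit.
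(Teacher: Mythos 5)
Your overall strategy --- the exponential transform, killing the stochastic integral by conditioning, bounding the drift via $(\mathbf{A_{qg}})$ and Kunita--Watanabe, and using $d\langle N\rangle_s=\abs{Z_s\sigma_s}^2dC_s+d\langle N^\perp\rangle_s$ so that a large enough exponent absorbs the quadratic contributions --- is exactly the paper's. But there is a sign error on the $dK$ term that breaks the argument. With $dY=-dV-dK+dN$ and $\phi(y)=e^{\alpha y}$, It\^o's formula gives
\[
\frac{1}{2}\int_\tau^T\phi''(Y_s)\,d\langle N\rangle_s-\int_\tau^T\phi'(Y_s)\,dK_s
=\phi(Y_T)-\phi(Y_\tau)+\int_\tau^T\phi'(Y_s)\,dV_s-\int_\tau^T\phi'(Y_s)\,dN_s ,
\]
so the $K$-term enters the left-hand side with a \emph{minus} sign, not the plus sign you wrote. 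Since you take $\alpha>0$, you have $\phi'(Y)>0$ and hence $-\int\phi'(Y)\,dK\le 0$; moved to the right-hand side it becomes a nonnegative term $+\int\phi'(Y)\,dK$ over which you have no a priori control (bounding $K$ is part of what is to be proved). Your inequality therefore does not close, either for $\langle N\rangle$ or for $K$.

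The fix is precisely what the paper does: take the exponent \emph{negative}. With $\mu<0$ the function $e^{\mu y}$ is still convex, but now $\phi'(Y)=\mu e^{\mu Y}<0$, so $-\int\phi'(Y)\,dK=\abs{\mu}\int e^{\mu Y}dK\ge 0$ sits on the left-hand side together with $\frac{\mu^2}{2}\int e^{\mu Y}d\langle N\rangle$, and both quantities are controlled simultaneously; the paper takes $\mu=-4b$ with $b=\Lambda+1+\norm{g}_\infty$, so that $\frac{\mu^2}{2}-\abs{\mu}b=4b^2>0$ dominates the drift contributions --- exactly your bookkeeping with $\alpha$ replaced by $\abs{\mu}$. (One could instead keep $\alpha>0$ and first bound $E(K_T-K_\tau\mid\F_\tau)$ directly from the decomposition $K_T-K_\tau=Y_\tau-Y_T-\int_\tau^T dV_s+N_T-N_\tau$ in terms of $\norm{Y}_{\S^\infty}$ and $E(\langle N\rangle_T-\langle N\rangle_\tau\mid\F_\tau)$ and reinject, but that extra step is not in your write-up.) Apart from this sign issue, your outline is correct and matches the paper's proof.
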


Here, $\mathcal{A}_{BMO}$ refers to the increasing processes $K \in \mathcal{A}$ such that the norm $\norm{K}_{\mathcal{A}_{BMO}} = \sup_t \norm{ E\big( K_T - K_t |\F_t \big) }_\infty$ is finite.
Note that this statement is, to some extent, not so much about solutions to a (possibly reflected) BSDE but about quadratic semimartingales (see Barrieu and El Karoui \cite{BarrieuAndElKaroui}), and quadratic semimartingales are considered here up to a monotonous process. 
		
	\begin{remark}			\label{remark-aprioriestimate}
		The result implies in particular the following :  if $Y$ is a bounded semimartingale with decomposition $dY=-dV-dK+dN$, with $K$ monotonous (which boils down to increasing, up to considering $-Y$) and if the process $V$ is in $L^1_{BMO}$ (i.e. $\sup_t \norm{ E\big( \int_t^T \abs{dV_s} \big|\F_t\big) }_\infty < +\infty$), then $N \in BMO$ and $K \in \mathcal{A}_{BMO}$.
	\end{remark}

	\begin{proof}
		The proof uses the usual exponential transform. Let $\mu \in \R$, whose sign and value will be chosen later. By Itô's formula for the process $\exp(\mu Y)$ between a stopping time $t \in \mathcal{T}_0^T$ and $T$ one has 
			\begin{equation} \label{equation-exptransform}
			\begin{split}
				e^{\mu Y_t} - \mu \int_t^T e^{\mu Y_s}dK_s + \frac{\mu^2}{2} \int_t^T e^{\mu Y_s} d\langle N \rangle_s &= e^{\mu Y_T} + \mu \int_t^T e^{\mu Y_s} dV_s 						\\
					&\qquad - \mu \int_t^T e^{\mu Y_s} dN_s \ .
			\end{split}
			\end{equation}
		Since $Y \in \mathcal{S}^\infty$, the process $e^{\mu Y}$ is bounded, and since $N$ is a square-integrable martingale, $\int e^{\mu Y} dN$ is a martingale. 
		We have 
			\begin{align*}
				\abs{dV_s} &\le \abs{f(s,Y_s,Z_s\sigma_s)}dC_s + \abs{d\langle \nu, N^{\perp} \rangle_s} + \abs{g_s}\abs{d\langle N^{\perp} \rangle_s} ,
			\end{align*}
		and using the quadratic growth assumption on $f$ we have 
			\begin{align*}
				\abs{f(s,Y_s,Z_s\sigma_s)} \le \lambda(Y_s) \big(h_s^2 + \abs{Z_s\sigma_s}^2\big) \le \Lambda \big(h_s^2 + \abs{Z_s\sigma_s}^2\big) ,
			\end{align*}	
		where $\Lambda = \lambda(\norm{Y}_{\S^\infty})$.
		Using the Kunita-Watanabe inequality and $ab \le a^2 + b^2$, we see that
			\begin{align*}
				E\bigg(  \int_t^T e^{\mu Y_s} \abs{dV_s} \Big| \F_t \bigg) &\le \Lambda \Et{ \int_t^T e^{\mu Y_s} \big( h_s^2 + \abs{Z_s \sigma_s}^2 \big) dC_s } 
						+ \Et{ \int_t^T e^{\mu Y_s} d\bracket{\nu}_s } 	\\
						&\qquad   \Et{ \int_t^T e^{\mu Y_s} d\bracket{N^\perp}_s } + \norm{g}_\infty \Et{ \int_t^T e^{\mu Y_s} d\bracket{N^\perp}_s } \ .
			\end{align*}
		Recall that by the orthogonality of $M$ and $N^\perp$, $d\bracket{N} = \abs{Z\sigma}^2dC + d\bracket{N^\perp}$, and therefore both $\abs{Z\sigma}^2dC$ and $d\bracket{N^\perp}$ are less than or equal to $d\bracket{N}$. Therefore, 
			\begin{align*}
				E\bigg(  \int_t^T e^{\mu Y_s} \abs{dV_s} \Big| \F_t \bigg) &\le \Lambda \Et{ \int_t^T e^{\mu Y_s} h_s^2 dC_s }	+ \Et{ \int_t^T e^{\mu Y_s} d\bracket{\nu}_s } 	\\
						&\qquad   \big(\Lambda + 1 + \norm{g}_{\infty}\big) \Et{ \int_t^T e^{\mu Y_s} d\bracket{N}_s }  \ .
			\end{align*}		
		So, setting $b=(\Lambda + 1 + \norm{g}_\infty)$, and taking the conditional expectation of (\ref{equation-exptransform}) with respect to $\F_t$, one has 
			\begin{align*}
				0 -& \mu \Et{ \int_t^T e^{\mu Y_s} dK_s } + \Big\{\frac{\mu^2}{2} - \abs{\mu} b \Big\} \Et{ \int_t^T e^{\mu Y_s} d\bracket{N}_s }  \\
					&\qquad\qquad \le e^{\abs{\mu}\norm{Y}_{\S^\infty}} + \abs{\mu}\bigg[ \Lambda \Et{ \int_t^T e^{\mu Y_s} h_s^2 dC_s } + \Et{ \int_t^T e^{\mu Y_s}  d\bracket{\nu}_s } \bigg] - 0 .
			\end{align*}
		We now choose $\mu = -4b$, so $\frac{\mu^2}{2} - \abs{\mu} b = 4b^2$. 
		Since $b \ge 1$, $b^2 \ge b$. 
		We now use the fact that $e^{-\abs{\mu}\norm{Y}_{\S^\infty}} \le e^{\mu Y_s} \le e^{\abs{\mu}\norm{Y}_{\S^\infty}}$ and take the the $\sup_t$, so we obtain finally
			\begin{align*}
				\norm{K}_{\mathcal{A}_{BMO}} + \norm{N}_{BMO}^2 \le \frac{e^{8b\norm{Y}_{\S^\infty}}}{2b} \Big[ 1 + 4b \big( \Lambda \norm{h}_{L^2_{BMO}}^2 + \norm{\nu}_{BMO}^2 \big) \Big]
						< +\infty	.
			\end{align*}
	\end{proof}

Note however that while it indeed gives a bound for $N \in BMO$, this estimate does not guarantee that if $\norm{Y}_{\S^\infty} \longrightarrow 0$ then $\norm{N}_{BMO} \longrightarrow 0$.

%%%%%%%%%%%%%%%%%%%%%%%%%%%%%%%%%%%%%%%%%%%%%%%%%%%%%%%%%%%%%%%%%%%%%%%%%%%%%%%%%%%%%%%%%%%%%%%%%%%%%%%%%%%%%%%%%%%%%%%%%%%%%%%%%%%%%%%%%%%%%%%%%%%
%%%%%%%%%%%%%%%%%%%%%%%%%%%%%%%%%%%%%%%%%%%%%%%%%%%%%%%%%%%%%%%%%%%%%%%%%%%%%%%%%%%%%%%%%%%%%%%%%%%%%%%%%%%%%%%%%%%%%%%%%%%%%%%%%%%%%%%%%%%%%%%%%%%
%%%%%%%%%%%%%%%%%%%%%%%%%%%%%%%%%%%%%%%%%%%%%%%%%%%%%%%%%%%%%%%%%%%%%%%%%%%%%%%%%%%%%%%%%%%%%%%%%%%%%%%%%%%%%%%%%%%%%%%%%%%%%%%%%%%%%%%%%%%%%%%%%%%
  
\section{Comparison theorems and uniqueness.}					\label{section-comparisontheorem}

%//////////////////////////////////////////////////////////////////////////////////////////////////////////////////////////////////////////////////////////////////////////////////////////
%//////////////////////////////////////////////////////////////////////////////////////////////////////////////////////////////////////////////////////////////////////////////////////////
\subsection{Comparison theorem.}

We prove below the comparison theorem in our setting, which guarantees uniqueness in the existence theorems of sections \ref{section-existenceandstability} and \ref{section-generalisation}.
The regularity assumption that we require for the theorem to hold is, for notational simplicity, the following :
	\begin{itemize}
		\item[($\mathbf{A_{Df}}$)] The function $f$ is of class $\mathcal{C}^1$ (in the variable $(y,z)$, for all $\omega,t$) with 
			\begin{align*}
				\abs{f_y(t,y,z)} \le \rho(y) r_t^2	\qquad\text{and}\qquad \abs{f_z(t,y,z)} \le \rho'(y) (h_t+\abs{z}) ,
			\end{align*}					
		for some growth functions $\rho$ and $\rho'$, and some positive process $h \in L^2_{BMO}$.
	\end{itemize}

	\begin{theorem}			\label{proposition-comparisontheorem}
		Consider two sets of data $(f,\nu,g,\xi,L)$ and $(f',\nu,g',\xi',L')$, and assume that :
			\begin{enumerate}
				\item there exist solutions $(Y,N,K)$ and $(Y',N',K')$ to the corresponding reflected BSDEs,
				\item the parameters are ordered : $f' \le f$, $g' \le g$, $\xi' \le \xi$ and $L' \le L$,
				\item $f$ is regular enough : it satisfies ($\mathbf{A_{Df}}$).
			\end{enumerate}
		Then one has $Y' \le Y$. 
	\end{theorem}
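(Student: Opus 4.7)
The plan is to linearize the difference between the two reflected BSDEs into a linear equation, apply Girsanov and an exponential transform to eliminate the drift and martingale-bracket coefficients, and then exploit the Skorohod condition on the set $\{Y<Y'\}$ via a stopping-time argument to conclude that $\delta Y := Y-Y' \ge 0$. The key structural observation driving the whole argument is that whenever $Y'_s > Y_s$ one has $Y'_s > Y_s \ge L_s \ge L'_s$, so the Skorohod condition $\int_0^T \mathbf{1}_{\{Y'_s > L'_s\}} dK'_s = 0$ forces $dK'_s = 0$ and hence $d(K-K')_s = dK_s \ge 0$ on this set.

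First I would rewrite $dV(Y,N) - dV'(Y',N')$ by inserting intermediate points: the $f$-contribution splits as $[f(s,Y,Z\sigma) - f(s,Y',Z\sigma)] + [f(s,Y',Z\sigma) - f(s,Y',Z'\sigma)] + [f(s,Y',Z'\sigma) - f'(s,Y',Z'\sigma)]$, the first two linearizing via the fundamental theorem of calculus into $\alpha_s \delta Y_s\, dC_s$ and an inner product $\beta_s \cdot \delta Z_s \sigma_s \, dC_s$, while the third is non-negative by $f \ge f'$. For the $N^\perp$-quadratic part, the polarization
\begin{align*}
g_s \, d\bracket{N^\perp}_s - g'_s \, d\bracket{(N')^\perp}_s = g_s \, d\langle \delta N^\perp,\, N^\perp + (N')^\perp\rangle_s + (g_s - g'_s) \, d\bracket{(N')^\perp}_s
\end{align*}
produces a piece linear in $\delta N^\perp$ plus a non-negative piece, while the $\nu$-contribution gives $d\langle \nu, \delta N^\perp\rangle$. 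Using $(\mathbf{A_{Df}})$ together with the fact that $Y, Y' \in \S^\infty$ (so that proposition \ref{proposition-aprioriestimate} yields $N,N' \in BMO$), I would bound $|\alpha_s| \le C r_s^2$ (so $A_t := \int_0^t \alpha_s\, dC_s$ is uniformly bounded since $r \in L^{\infty,2}$) and $|\beta_s| \le C(h_s + |Z_s\sigma_s| + |Z'_s\sigma_s|)$ (so $\beta \in L^2_{BMO}$), whence the martingale $L := \int \beta \sigma^{-1}\, dM$ (well-defined by uniform ellipticity of $\sigma$) is in $BMO$ and satisfies $d\langle L,\delta N\rangle_s = \beta_s \cdot \delta Z_s \sigma_s\, dC_s$. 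The martingale $\mu := \int g\, d(N^\perp + (N')^\perp)$ is BMO since $g$ is bounded and both orthogonal parts are BMO. Packaging everything together the dynamics of $\delta Y$ become
\begin{align*}
d(\delta Y)_s = -\alpha_s \delta Y_s \, dC_s - d\langle \Gamma, \delta N\rangle_s - d\Lambda_s - d(K-K')_s + d(\delta N)_s,
\end{align*}
with $\Gamma := L + \nu + \mu \in BMO(P)$ and $\Lambda$ an increasing process.

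The final analytic step is a change of measure and an exponential transform. Defining $Q$ by $dQ/dP = \mathcal{E}(\Gamma)_T$ (well-defined since $\Gamma \in BMO(P)$), Kazamaki's theorem yields $\widetilde{\delta N} := \delta N - \langle \Gamma, \delta N\rangle \in BMO(Q)$, and applying It\^o to $e^{A_t}\delta Y_t$ removes the $\alpha \delta Y$ term to give $d(e^{A_t}\delta Y_t) = e^{A_t}[-d\Lambda_t - d(K-K')_t + d\widetilde{\delta N}_t]$. For any stopping time $\tau \in \mathcal{T}_0^T$, set $\sigma := \inf\{s \ge \tau : \delta Y_s \ge 0\} \wedge T$. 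On the event $\{\delta Y_\tau < 0\}$, continuity forces $\delta Y < 0$ on $[\tau, \sigma)$, hence $dK'_s = 0$ and $d(K-K')_s \ge 0$ on this interval by the observation above. Integrating from $\tau$ to $\sigma$ and taking conditional $Q$-expectation (the stochastic integral being a true martingale because $e^A$ is bounded and $\widetilde{\delta N} \in BMO(Q)$) yields
\begin{align*}
e^{A_\tau} \delta Y_\tau = E_Q\Big[ e^{A_\sigma}\delta Y_\sigma + \int_\tau^\sigma e^{A_s}\big(d\Lambda_s + d(K-K')_s\big) \,\Big|\, \F_\tau \Big] \ge 0,
\end{align*}
since $\delta Y_\sigma \ge 0$ by continuity on $\{\sigma < T\}$ and by $\xi \ge \xi'$ on $\{\sigma = T\}$, contradicting $\delta Y_\tau < 0$.

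The main technical obstacle I expect is the handling of the quadratic $g$-term: converting $g\,d\bracket{N^\perp} - g'\,d\bracket{(N')^\perp}$ into a bracket of $\delta N^\perp$ against a genuinely BMO martingale is what makes the linearized equation truly linear in $\delta N$, and it relies crucially on the fact that both $N^\perp$ and $(N')^\perp$ are BMO, which is precisely what proposition \ref{proposition-aprioriestimate} delivers when applied to each of the two solutions.
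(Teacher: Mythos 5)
Your proposal is correct, and its core coincides with the paper's argument: the same linearization of the drift difference (splitting the $f$-increment into a $y$-part, a $z$-part and the signed term $f-f'\ge 0$; polarizing the $g$-brackets into a term linear in $\delta N^\perp$ plus a signed remainder), the same absorption of the $z$-increment and the $N^\perp$-brackets into a single $BMO$ Girsanov change of measure $\Gamma=\int\beta\sigma^{-1}dM+\nu+\int g\,d(N^\perp+(N')^\perp)$, the same bounded integrating factor $e^{A}$ killing the $y$-increment, and the same key observation that wherever $Y'>Y\ge L\ge L'$ the Skorohod condition forces $dK'=0$, so the difference of increasing processes has the right sign exactly where it is needed. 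The one genuine difference is the concluding step: the paper applies the It\^o--Tanaka formula to $(\Delta Y)^+$ (with $\Delta Y=Y'-Y$), packages the signed terms and the local time into a single decreasing process $D$, and reads off $(\Delta Y)^+\le 0$ from an explicit representation of $(\Delta Y)^+$ as the solution of a linear equation with decreasing forcing; you avoid local times altogether by a first-exit-time argument from the event $\{\delta Y_\tau<0\}$ and reach a contradiction. Both routes are sound. The paper's version yields a representation formula for the positive part; yours is slightly more elementary (no Tanaka formula) at the cost of the continuity argument identifying $\delta Y_\sigma\ge 0$ at the exit time and of passing from ``$\delta Y_\tau\ge 0$ for every stopping time $\tau$'' to ``$\delta Y\ge 0$ everywhere'' (immediate here by continuity and deterministic $\tau$). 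Your justification that $\Gamma\in BMO$ correctly isolates the only place where Proposition \ref{proposition-aprioriestimate} is needed, namely that $N,N'\in BMO$ so that both $\beta\in L^2_{BMO}$ and $\int g\,d(N^\perp+(N')^\perp)\in BMO$; the only cosmetic blemish is the reuse of the symbol $\sigma$ for both the volatility and the exit time.
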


While the proof given in \cite{KobylanskiAndLepeltierAndQuenezAndTorres} in a Brownian setting uses an optimal stopping representation and the comparison theorem for BSDEs, 
we rely here on a classical linearization argument and the properties of solutions to a linear BSDE. 
More precisely, we study the positive part $(\Delta Y)^+$, where $\Delta X = X' - X$ for a generic quantity $X$, and show that $(\Delta Y)^+ \le 0$. 

	\begin{proof}
		Denoting by $l$ the local time of $\Delta Y$ in $0$, the Itô-Tanaka formula gives
			\begin{equation}	\label{equation-DeltaY+}
			\begin{split}
				d(\Delta Y)_s^+ &= 1_{\{ \Delta Y_s > 0 \}} d\Delta Y_s +\frac{1}{2}dl_s\\
				&= 1_{\{ \Delta Y_s > 0 \}} \Big[ - d\Delta V_s - d\Delta K_s + d\Delta N_s \Big] + \frac{1}{2}dl_s \ .
			\end{split}
			\end{equation}			
		Now,  gathering terms, rewriting differences, and linearizing some,
			\begin{align*}
				d\Delta V_s &= \Big[ f'(s,Y'_s,Z'_s\sigma_s)-f(s,Y_s,Z_s\sigma_s) \Big]dC_s 			\\
					&\qquad + d\langle \nu, (N')^\perp - N^\perp \rangle + g'_s d\langle (N')^\perp \rangle_s - g_sd\langle N^\perp \rangle_s			\\
				\\				
				&= \Big[ (\Delta f)(s,Y'_s,Z'_s\sigma_s) + f(s,Y'_s,Z'_s\sigma_s)-f(s,Y_s,Z_s\sigma_s) \Big]dC_s 		\\
					&\qquad +  d\langle \nu, \Delta N^\perp \rangle_s +  (\Delta g)_s\, d\langle (N')^\perp \rangle_s +	g_s \Big[ d\langle (N')^\perp \rangle - d\langle N^\perp \rangle_s \Big]	\\
				%&= \Big[ \delta_y f(s,Y_s,Y'_s,Z_s\sigma_s)\Delta Y_s + \delta_z f(s,Y'_s,Z_s\sigma_s, Z'_s\sigma_s)\Delta Z + \Delta f(s,Y'_s,Z'_s\sigma_s) \Big]dC_s 		\\
				\\				
				&= \Big[ (\Delta f)(s,Y'_s,Z'_s\sigma_s) + f(s,Y'_s,Z'_s\sigma_s) - f(s,Y_s,Z'_s\sigma_s) + f(s,Y_s,Z'_s\sigma_s) - f(s,Y_s,Z_s\sigma_s) \Big]dC_s 		\\
					&\qquad +  d\langle \nu, \Delta N^\perp \rangle_s +  (\Delta g)_s\, d\langle (N')^\perp \rangle_s +	g_s  d\langle (N')^\perp + N^\perp , \Delta N^\perp \rangle \\
				\\
				&= \Big[ \Delta f + F_y \Delta Y + F_z \Delta Z \sigma \Big]dC 		\\
					&\qquad +  d\langle \nu', \Delta N^\perp \rangle +  (\Delta g)\, d\langle (N')^\perp \rangle	, \\
			\end{align*}		
		where 
			\begin{align*}
				F_y(s)&=F_y(s,Y_s,Y'_s,Z'_s\sigma_s)=\int_0^1 f_y(s,Y_s+u\Delta Y_s,Z'_s\sigma_s)du  \ , \\
				F_z(s)&=F_z(s,Y_s,Z_s\sigma_s,Z'_s\sigma_s)=\int_0^1 f_z(s,Y_s,Z_s\sigma_s + u\Delta Z_s\sigma_s)du \text{ and}\\
				\nu' &= \nu + \int gd(N^\perp+(N')^\perp)																		\ .
			\end{align*}			
		So we can rewrite (\ref{equation-DeltaY+}) as
			\begin{align} \label{equation-DeltaY+linear}
				d(\Delta Y)^+ &= -dD - F_y (\Delta Y)^+ dC + 1_{\{ \Delta Y > 0 \}} \Big[ d\Delta N - F_z \Delta Z\sigma dC -  d\langle \nu', \Delta N^\perp \rangle \Big] \ ,
			\end{align}
		where
			\begin{align*}
				dD = 1_{\{ \Delta Y > 0 \}} \Big[ \Delta f dC_s + \Delta g\, d\langle (N')^\perp \rangle + d\Delta K \Big] - \frac{1}{2}dl
			\end{align*}		
		is a decreasing process. Indeed $\Delta f \le 0$, $\Delta g \le 0$, $dl \ge 0$ and
			\begin{align*}
				1_{\{ \Delta Y > 0 \}} d\Delta K = \underbrace{ 1_{\{ \Delta Y > 0 \}} dK'}_{=0} - \underbrace{ 1_{\{ \Delta Y > 0 \}} dK}_{\ge 0} \le 0
			\end{align*}
		because $dK \ge 0$ and on $\{ \Delta Y > 0 \}$ we have $Y'>Y \ge L \ge L'$, hence $dK'=0$.
		
		$(\Delta Y)^+$ is therefore seen as the solution to a linear equation (\ref{equation-DeltaY+linear}). 
		Define the integrating factor $B_t=e^{\int_0^t F_y(u) dC_u}$ and the measure $Q$ by $\frac{dQ}{dP}=\Exp(\int F_z \sigma^{-1} dM + \nu')_t$. 
		By the assumption on $f_z$ and the fact that $h \in L^2_{BMO}$ and $\nu$, $N$, and $N'$ are in BMO (recall proposition \ref{proposition-aprioriestimate} and the definition of a solution), 
		$\int F_z \sigma^{-1} dM + \nu'$ is in BMO, and therefore Q is indeed well defined. 
		Then, $\widetilde{\Delta N} =  \Delta N - \int F_z \Delta Z\sigma dC_s -  \langle \nu', \Delta N^\perp \rangle $ is a $BMO(Q)$-martingale. 
		By the assumption on $f_y$, the process $B_\cdot$ is bounded so $\int B_u 1_{\Delta Y_u > 0} d\widetilde{\Delta N_u}$ is again a $Q$-martingale. 
		Therefore, looking at the dynamic of $\widehat{Y}=BY$ under $Q$ we finally find that
			\begin{equation*}
				0 \le (\Delta Y_t)^+ = E_Q \bigg( e^{\int_t^T F_y(v)dC_v} (\Delta \xi)^+ + \int_t^T e^{\int_t^u F_y(v)dC_v} dD_u \Big| \F_t \bigg) \le 0 \ .
			\end{equation*}	
	\end{proof}

	\begin{remark}
		The previous theorem is stated, for convenience, for a function $f$ which is $C^1$. Typically, for the comparison theorem one only requires $f$ to be locally Lipschitz, in which case the processes $F_y,F_z$ have to be replaced by the differential quotients :  $\delta_y f(Y,Y',Z'\sigma) = \frac{f(Y',Z')-f(Y,Z')}{Y'-Y} 1_{Y \neq Y'}$, etc, and the above proof works as long as $F_y \in L^{\infty,1}$ and $F_z \in L^2_{BMO}$. 
		These criteria are satisfied as soon as 
			\begin{itemize}
				\item[($\mathbf{A_{\text{lLip}}}$)] There exist growth functions $\rho$ and $\rho'$, and a process $h \in L^2_{BMO}$ such that 
					\begin{align*}
						\abs{f(t,y',z') - f(t,y,z)} \le \rho(y,y') r_t^2 \abs{\Delta y} +  \rho'(y,y') \big(h_t + \abs{z} + \abs{z'} \big) \abs{\Delta z} \ . 
					\end{align*}		
				\end{itemize}
			Note : when $\rho, \rho'$ are constants this is the standard assumption of local Lipchitz regularity made in the quadratic BSDE literature (see for instance Briand and Elie \cite{BriandAndElie} and Tevzadze \cite{Tevzadze}). However, since we are dealing with bounded solutions, the assumption can be weakened to the case ($\mathbf{A_{\text{lLip}}}$) where $\rho,\rho'$ are growth functions.
	\end{remark}

%//////////////////////////////////////////////////////////////////////////////////////////////////////////////////////////////////////////////////////////////////////////////////////////
%//////////////////////////////////////////////////////////////////////////////////////////////////////////////////////////////////////////////////////////////////////////////////////////
  
\subsection{Special comparison theorem.}

When the two sets of data are in a comparison configuration and when the lower obstacles are the same, one can say more than $Y' \le Y$ and also compare the increasing processes of the two solutions, $K'$ and $K$. 

	\begin{proposition}			\label{proposition-refinedcomparisontheorem}
		Let $(f,g,\nu,\xi,L)$ and $(f',g',\nu,\xi',L)$ be some data, and assume that :
			\begin{enumerate}
				\item there exist solutions $S=(Y,N,K)$ and $S'=(Y',N',K')$ to the corresponding RBSDEs,
				\item the drift coefficients are ordered : $f' \le f$, $g' \le g$, %and the obstacles are the same $L = L'$,
				\item $Y'$ is dominated by $Y$ : $Y' \le Y$.
			\end{enumerate}
		Then it is the case that $dK_t \le dK'_t$.
	\end{proposition}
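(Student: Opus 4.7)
The plan is to work directly with the nonnegative semimartingale $\Delta Y := Y - Y'$ and to establish that $dK - dK' \le 0$ as a signed measure, by analyzing the two random sets $\{\Delta Y > 0\}$ and $\{\Delta Y = 0\}$ separately. On $\{\Delta Y > 0\}$ the argument is immediate: since $Y > Y' \ge L$ one has $Y > L$, so the Skorohod condition $1_{\{Y > L\}} dK = 0$ gives $1_{\{\Delta Y > 0\}} (dK - dK') = -1_{\{\Delta Y > 0\}} dK' \le 0$.

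The substantive work is on $\{\Delta Y = 0\}$. I would apply It\^o--Tanaka to the nonnegative semimartingale $(\Delta Y)^+ = \Delta Y$ to obtain $1_{\{\Delta Y = 0\}} d\Delta Y = \frac{1}{2} dl$, where $l$ is the local time of $\Delta Y$ at $0$, and then restrict the semimartingale decomposition $d\Delta Y = -(dV - dV') - (dK - dK') + d(N - N')$ to $\{\Delta Y = 0\}$. The pivotal input is the occupation times formula, which gives $1_{\{\Delta Y = 0\}} d\bracket{\Delta Y} = 0$; writing $\bracket{\Delta Y} = \bracket{N - N'} = \int \abs{(Z - Z')\sigma}^2 dC + \bracket{N^\perp - (N')^\perp}$ as a sum of two nonnegative measures forces each to vanish on $\{\Delta Y = 0\}$. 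As a consequence, the stochastic integral $\int 1_{\{\Delta Y = 0\}} d(N - N')$ is a continuous local martingale with zero quadratic variation, hence identically zero.

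For $d(V - V')$ restricted to $\{\Delta Y = 0\}$, three simplifications happen simultaneously. Using $Y = Y'$ and the derived fact $Z\sigma = Z'\sigma$ ($dC$-a.e.) on this set, the driver piece reduces to $1_{\{\Delta Y = 0\}}(f - f')(s,Y_s,Z_s\sigma_s) dC \ge 0$. The Kunita--Watanabe inequality combined with $1_{\{\Delta Y = 0\}} d\bracket{N^\perp - (N')^\perp} = 0$ annihilates the $\bracket{\nu, N^\perp - (N')^\perp}$ term. Writing $g\, d\bracket{N^\perp} - g'\, d\bracket{(N')^\perp} = g\, d\bracket{N^\perp - (N')^\perp, N^\perp + (N')^\perp} + (g - g')\, d\bracket{(N')^\perp}$, the first piece vanishes again by Kunita--Watanabe, leaving $(g - g')\, d\bracket{(N')^\perp} \ge 0$. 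Thus $1_{\{\Delta Y = 0\}} d(V - V') \ge 0$, and assembling everything gives $1_{\{\Delta Y = 0\}} (dK - dK') = -1_{\{\Delta Y = 0\}} d(V - V') - \frac{1}{2} dl \le 0$. Combining the two sets yields $dK \le dK'$.

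The main obstacle is precisely the collapse of the martingale contribution and of the three orthogonal pieces of $d(V - V')$ on the contact set $\{\Delta Y = 0\}$; everything rests on the single occupation-times identity and the equalities $Z\sigma = Z'\sigma$ and $\bracket{N^\perp} = \bracket{(N')^\perp}$ (in the $dC$-measure sense) that it forces on that set. This is where the nonlinear Snell-envelope spirit pays off compared to the penalization/comparison approach: no regularity assumption on $f$ or $f'$ is ever used, only the minimal hypotheses of the proposition, which matches the claim in the introduction.
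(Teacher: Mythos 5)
Your proof is correct and follows essentially the same route as the paper's: split into $\{\Delta Y>0\}$ and $\{\Delta Y=0\}$, use the Skorohod condition on the first set, and on the contact set kill the martingale part and the orthogonal bracket terms via $1_{\{\Delta Y=0\}}\bigl(\abs{\Delta Z\sigma}^2 dC + d\bracket{\Delta N^\perp}\bigr)=0$ together with Kunita--Watanabe, leaving only the nonnegative measure $(f-f')\,dC+(g-g')\,d\bracket{(N')^\perp}$ and the local time. The only cosmetic difference is that you derive the vanishing of $d\bracket{\Delta N}$ on $\{\Delta Y=0\}$ from the occupation-times formula, whereas the paper identifies the finite-variation and martingale parts of $\Delta Y=(\Delta Y)^+$ and then invokes It\^o's isometry; the two are interchangeable.
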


The intuition is quite clear. First, since one has $Y'_t \le Y_t$, if $Y$ doesn't touch the barrier ($Y_t > L_t$), then $dK_t=0$ and whether $Y'_t > L_t$ or  $Y'_t = L_t$, one has $dK'_t \ge 0 = dK_t $. So the only non-trivial case is when $Y$ touches the barrier, and therefore $Y'$ as well. In that case, since the extra forces $dK'$ and $dK$ are minimal, they only prevent the drifts $dV'$ and $dV$ from driving the solutions $Y'$ and $Y$ under the obstacle. But since $dV'_t \le dV_t$ in that case, the correction that could be needed for $Y$ will be less than that needed for $Y'$. The proof makes this heuristics rigorous.

Unlike in \cite{HamadeneAndLepeltierAndMatoussi}, \cite{PengAndXu}, \cite{LepeltierAndMatoussiAndXu}, \cite{KobylanskiAndLepeltierAndQuenezAndTorres}, the proof we give here works under minimal assumptions and in particular does not require a regularity assumptions on $f$, since it does not rely on the comparison theorem for BSDEs.

	\begin{proof}
		In this proof, contrary to the rest of the paper, $\Delta X$ denotes $X-X'$ for a generic quantity $X$. In order to deal with what happens locally when the process $\Delta Y$ touches $0$, we proceed as in El Karoui \textit{et al.} \cite{EKnKnPnPnQ} : write down the structure of $\Delta Y$ and $\Delta Y^+$, argue that these two processes are equal (since by assumption $\Delta Y \ge 0$), identify their finite variational and martingale parts, and then extract the relevant information. Our goal is to prove that $d\Delta K \le 0$. 
		
		We have
			\begin{align*}
				d\Delta Y &= -d\Delta V - d\Delta K + d\Delta N	\qquad\text{ and}		\\
				d(\Delta Y)^+ &= 1_{\{\Delta Y>0\}} d\Delta Y + \half dl \ ,
			\end{align*}
		where $l$ is the local time of $\Delta Y$ at 0. Identifying the finite variational and martingale parts, we see that 
			\begin{align*}
				-d\Delta V - d\Delta K  &= 1_{\{\Delta Y>0\}} \Big( -d\Delta V - d\Delta K \Big) + \half dl	\qquad\text{ and}		\\
				d\Delta N	 &= 1_{\{\Delta Y>0\}} d\Delta N	\ ,
			\end{align*}
		that is to say
			\begin{align*}
				1_{\{\Delta Y=0\}} \Big( -d\Delta V - d\Delta K \Big) &= \half dl	\qquad\text{ and}	\\
				1_{\{\Delta Y=0\}} d\Delta N	&= 0 \ .
			\end{align*}
		
		The second equation implies, by Itô's isometry and the orthogonality between $M$ and $\Delta N^\perp$, that  $1_{\Delta Y=0} \Big( \abs{\Delta Z \sigma}^2 dC + d\langle \Delta N^\perp \rangle \Big) =0 $. So we know that on the set $\{ Y' = Y \}$ (i.e. against $1_{\{\Delta Y=0\}}$) we have $Y=Y'$ and $Z=Z'$. 
		We also notice that by the Kunita-Watanabe inequality, $1_{\{\Delta Y=0\}} d\langle \nu',\Delta N^\perp \rangle = 0$ for any continuous semimartingale $\nu'$.
		
		The drift term can be rewritten, using $\Delta \nu = \nu - \nu = 0$,
			\begin{align*}
				d\Delta V_t &= \big( f(S) - f'(S') \big) dC + d\bracket{\nu, N^\perp} - d\bracket{\nu, (N')^\perp} + g d\bracket{N^\perp} - g' d\bracket{(N')^\perp}														\\
				&=  \big( f(S) - f(S') + (\Delta f)(S') \big) dC + d\bracket{\nu, \Delta N^\perp} + d\bracket{\Delta \nu	, (N')^\perp} 																										\\
							&\qquad\qquad + g \Big[ d\bracket{N^\perp} - d\bracket{(N')^\perp} \Big] + (\Delta g) d\bracket{(N')^\perp}								\\
				&= \Big[ \big( f(S) - f(S') \big)dC + d\bracket{\nu, \Delta N^\perp} + g d\bracket{N^\perp+(N')^\perp, \Delta N^\perp } \Big] 																					\\
							&\qquad\qquad + \Big[ (\Delta f)(S') dC + d\bracket{(\Delta \nu)	, (N')^\perp} + (\Delta g) d\bracket{(N')^\perp} \Big]																			\\
				&= \Big[ \big( f(S) - f(S') \big)dC + d\bracket{\nu', \Delta N^\perp}  \Big] + \Big[ d(\Delta V)(S') \Big] \ ,
			\end{align*}
		where $\nu' = \nu + \int g d(N^\perp + (N')^\perp)$. By the assumptions on the coefficients, we know that $d(\Delta V)(S')_t =: dI_t \ge 0$. 
		So we find that against $1_{\{\Delta Y_t=0\}}$ we have
			\begin{align*}
				1_{\{\Delta Y_t=0\}} d\Delta V_t = 0  + 1_{\{\Delta Y_t=0\}} dI_t \ .
			\end{align*}
		
		In the end, 
			\begin{align*}
				1_{\{\Delta Y=0\}} \big( -dI - d\Delta K \big) = \half dl \ ,
			\end{align*}
		so
			\begin{align*}
				1_{\{\Delta Y=0\}} d\Delta K = -1_{\{\Delta Y=0\}} \underbrace{dI}_{\ge 0} - \half \underbrace{dl}_{\ge 0} \le 0 \ ,
			\end{align*}
		and so we have proven that $1_{\{\Delta Y=0\}} d\Delta K \le 0$. And when $\Delta Y > 0$, one has $Y > Y' \ge L' = L$ so $dK=0 \le dK'$, and therefore $1_{\{\Delta Y>0\}} d\Delta K \le 0$, which completes the proof.
	\end{proof}

%%%%%%%%%%%%%%%%%%%%%%%%%%%%%%%%%%%%%%%%%%%%%%%%%%%%%%%%%%%%%%%%%%%%%%%%%%%%%%%%%%%%%%%%%%%%%%%%%%%%%%%%%%%%%%%%%%%%%%%%%%%%%%%%%%%%%%%%%%%%%%%%%%%
%%%%%%%%%%%%%%%%%%%%%%%%%%%%%%%%%%%%%%%%%%%%%%%%%%%%%%%%%%%%%%%%%%%%%%%%%%%%%%%%%%%%%%%%%%%%%%%%%%%%%%%%%%%%%%%%%%%%%%%%%%%%%%%%%%%%%%%%%%%%%%%%%%%
%%%%%%%%%%%%%%%%%%%%%%%%%%%%%%%%%%%%%%%%%%%%%%%%%%%%%%%%%%%%%%%%%%%%%%%%%%%%%%%%%%%%%%%%%%%%%%%%%%%%%%%%%%%%%%%%%%%%%%%%%%%%%%%%%%%%%%%%%%%%%%%%%%%
\section{Existence and stability.}					\label{section-existenceandstability}

In this section we work under the assumption that the derivatives of $f$ are controlled in the following way :
	\begin{itemize}
		\item[($\mathbf{A_{der}}$)] $f$ is twice continuously differentiable in the variables $(y,z)$ 
			and there exists $\rho,\rho',\lambda >0$, and $h\in L^2_{BMO}$ such that
			\begin{align*}
				\abs{f_{y}(t,y,z)} \le \rho r_t^2 \qquad \text{ and } \qquad \abs{f_{z}(t,y,z)} \le \rho' (h_t + \abs{z})		\ , \\
				\abs{f_{yy}(t,y,z)} \le \lambda r_t^2, \quad \abs{f_{yz}(t,y,z)} \le \lambda r_t \quad\text{ and }\quad \abs{f_{zz}(t,y,z)} \le \lambda \ .
			\end{align*}
	\end{itemize}
Rather than aiming to construct a solution to (\ref{equation-RBSDE.of.reference}) by an approximation procedure on the data, as was done in the Brownian setting (see \cite{KobylanskiAndLepeltierAndQuenezAndTorres}), 
we work in a more direct way, as in section 5 of \cite{EKnKnPnPnQ}, and for this we adapt the pertubation procedure introduced in \cite{Tevzadze} for BSDEs. We then analyze the dependence of the solution on the data.

%//////////////////////////////////////////////////////////////////////////////////////////////////////////////////////////////////////////////////////////////////////////////////////////
%//////////////////////////////////////////////////////////////////////////////////////////////////////////////////////////////////////////////////////////////////////////////////////////
\subsection{Principle.}

As said in the introduction, the strategy is to first apply the fixed point theorem. To perform this, one will use only the following assumption on $f$ :
	\begin{itemize}
		\item[$(\mathbf{A_{locLip}})$] The function $f$ is differentiable at $(0,0)$ (in $(y,z)$, for all $(\omega, s)$\,), and there exist $\lambda > 0$  such that, writing $\beta_s=f_y(s,0,0)$ and $\gamma_s=f_z(s,0,0)$, one has
			\begin{itemize}
				\item  for all $\omega, s, y_1,y_2, z_1, z_2$ : 
					\begin{multline*}
						\abs{f(s,y_1,z_1) - f(s,y_2,z_2) - \beta_s (y_1-y_2) - \gamma_s (z_1-z_2) } \\ 
							\le \lambda \Bigl( r_s \abs{y_1} + r_s \abs{y_2} + \abs{z_1} + \abs{z_2}  \Bigr) \bigl( r_s \abs{y_1-y_2} + \abs{z_1-z_2} \bigr) \ ,
					\end{multline*}
				\item $\gamma \in L^2_{BMO}$ and $\beta \in L^{\infty,1}$ (that is : $\int_0^T \abs{\beta_s} dC_s \in L^\infty$),
			\end{itemize}		
	\end{itemize}
which follows naturally from the assumption on the second derivative of $f$ in $(\mathbf{A_{der}})$. In all generality, it allows also for quadratic growth in $y$. So what one actually proves first is that when $f$ satisfies this assumption (with possibly quadratric growth in $y$ and $z$), and when the data are small enough (in a sense to specify), there exists a solution. 

\paragraph*{}
The perturbations procedure is then carried as follows for a reflected BSDE with obstacle $L \le 0$. Split the initial data in $n$ pieces : $(\xi^i)_{i=1 \ldots n}$ and $(\alpha^i)_{i = 1 \ldots n}$ such that $\sum_{i=1}^n \xi^i = \xi$ and $\sum_{i=1}^n \alpha^i = \alpha$, where $\alpha=f(\cdot,0,0)$, and such that for each $i$, $(\xi^i,\alpha^i)$ is small enough. For the sake of the proof we take the particular decomposition given by $\xi^i := \xi^{(n)} = \frac{1}{n} \xi$ and $\alpha^i := \alpha^{(n)} = \frac{1}{n} \alpha$, for $n$ big enough, though other decompositions would do.

First, there is a solution $S^1=(Y^1,N^1,K^1)$ to the reflected BSDE (\ref{equation-RBSDE.of.reference}) with small data $(f-\alpha+\alpha^1 , \nu , g, \xi^1,L)$.

Now, unless otherwise specified, we denote by $\overline{x}^k$ the sum $\Sigma_{j=1}^{k} x^j$, for a general quantity $x$ indexed by $\{ 1, \ldots, n \}$. 
For $i = 2$ to $n$, having obtained a solution $\overline{S}^{i-1}=(\overline{Y}^{i-1},\overline{N}^{i-1},\overline{K}^{i-1})$ to the reflected BSDE (\ref{equation-RBSDE.of.reference}) with parameters $(f-\alpha+\overline{\alpha}^{i-1} , \nu , g, \overline{\xi}^{i-1},L)$, one incorporates one more $(\alpha^i,\xi^i)$ in the system. One first constructs the perturbation $S^i=(Y^i,N^i,K^i)$ solving the pertubation equation  
	\begin{equation}  \label{equation-perturbation.equation.of.reference.procedure} 
		\left\{ \begin{aligned}
			dY^i &= -dV^i(Y^i,N^i) - dK^i + dN^i	\ ,\\
			Y^i_T &=\xi^i	\ ,\\
			\overline{Y}^{i-1} &+Y^i \ge \overline{L}^{i-1}+L^i \ ,\\
			d\overline{K}^{i-1}&+dK^i \ge 0 \ \text{and}\ (d\overline{K}^{i-1}+dK^i)(\overline{Y}^{i-1}+Y^i > \overline{L}^{i-1}+L^i)=0
		\end{aligned} \right.
	\end{equation}
with drift given by
	\begin{align*}
		dV^i(Y^i,N^i)_s &= \big[ f(\overline{S}^{i-1}+S^i) - f(\overline{S}^{i-1}) + \alpha^i_s \big]dC_s	+ d\langle \nu + \int 2g d(\overline{N}^{i-1})^\perp,(N^i)^\perp \rangle_s 
							+ gd\langle (N^i)^\perp \rangle_s			\\
		&=\big[ \overline{f}^{i-1}(S^i) + \alpha^i_s \big]dC_s	+ d\langle \overline{\nu}^{i-1},(N^i)^\perp \rangle_s + g d\langle (N^i)^\perp \rangle_s \ ,
	\end{align*}	
where $\overline{\nu}^{i-1} = \nu + \int 2g d(\overline{N}^{i-1})^\perp$, and $\overline{f}^{i-1}$ is the function $f$ recentered around $\overline{S}^{i-1}$. It satisfies $\overline{f}^{i-1} (0)=0$ so the residual-drift (constant part) in this equation is given by $\alpha^i$. So the parameters $(\overline{f}^{i-1} + \alpha^i , \nu^{i-1}, g, \xi^i , L)$ here are small in the required sense. Finally, one sums $\overline{S}^i := \overline{S}^{i-1} + S^i$ to obtain a solution to the reflected BSDE (\ref{equation-RBSDE.of.reference}) of parameters $(f-\alpha+\overline{\alpha}^{i} , \nu , g, \overline{\xi}^{i},L)$. For $i=n$ this provides a solution to the reflected BSDE of interest. 

This allows us to conclude to existence for those reflected BSDEs with negative obtacles. But then we can show that up to translation, this covers all the cases where the obstacle is upper-bounded.

Note already that the above perturbation equation (\ref{equation-perturbation.equation.of.reference.procedure}) is not a RBSDE in the variable $S^i=(Y^i,N^i,K^i)$ because $K^i$ is not necessarily increasing. It could be viewed as a reflected BSDE in the variable $(Y^i,N^i,\overline{K}^i)$ but this point of view will not be used (see the remark after proposition \ref{proposition-smallperturbationtoaRBSDE} and remark \ref{remark-uniqueness.for.perturbations.} after its proof). Also, note that the solution $S^1$ to the initial, small RBSDE can be viewed as a perturbation : $\overline{S}^1=0+S^1$, $0$ being the solution to the RBSDE of parameters $(f-f(\cdot,0,0), \nu , g,0,L)$. So it would be enough to study only the pertubation equations, but it seemed clearer to treat first the small reflected BSDEs and then deal with what changes for the perturbation equations.

%//////////////////////////////////////////////////////////////////////////////////////////////////////////////////////////////////////////////////////////////////////////////////////////
%//////////////////////////////////////////////////////////////////////////////////////////////////////////////////////////////////////////////////////////////////////////////////////////
\subsection{Existence for small reflected BSDEs.}

%***********************************************************************************************************************************************************************************************************
\subsubsection{Underlying problem.}

In order to use the fixed point theorem, we need to check that the underlying problem, that is to say the backward stochastic problem that one sees when the drift $dV_t$ is a fixed process and doesn't depend on the solution, defines indeed a map from $\S^\infty \times BMO$ to itself. For reflected BSDEs, as was explained in El Karoui \emph{et al.} \cite{EKnKnPnPnQ}, the solution is the Snell envelope of a certain process (more precisely, $Y+\int_0^\cdot dV_s$ is the Snell envelope of $L + \int_0^\cdot dV_s$). 

	\begin{proposition}			\label{proposition-underlyingproblem}
		Let $V \in L^1_{BMO}$ (in the sense that $\sup_t \norm{ E\big( \int_t^T \abs{dV_s} \big|\F_t \big) }_\infty < +\infty$), $\xi \in L^\infty$, and $L$ be upper bounded. There exist a unique $(Y,N,K) \in \mathcal{S}^\infty \times BMO \times \mathcal{A}$ solution to the reflected BSDE : 
			\begin{align}	\label{equation-RBSDE.with.non-circular.driver} 
				\left\{ \begin{aligned}
					dY &= -dV - dK + dN	\ ,\\
					Y_T &=\xi	\ ,\\
					Y &\ge L \ \text{and}\ 1_{\{Y>L\}}dK=0 \ .
				\end{aligned} \right.
			\end{align}			
		In particular, this applies when $dV_s = dV(y,n)_s = f(s,y_s,z_s \sigma_s)dC_s + d\langle \nu,n^\perp \rangle_s + g_s d\langle n^\perp \rangle_s$, for $f$ satisfying the quadratic growth condition $(\mathbf{A_{qg}})$, $\nu \in BMO$, $g \in L^\infty$ and $(y,n) \in \mathcal{S}^\infty \times BMO$.
	\end{proposition}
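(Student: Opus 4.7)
The plan is to recognize this as a Snell envelope problem, which is natural since here the driver $V$ is a \emph{fixed} process (independent of the unknown $(Y,N,K)$). Introducing $\tilde Y_t := Y_t + \int_0^t dV_s$, $\tilde L_t := L_t + \int_0^t dV_s$ and $\tilde \xi := \xi + \int_0^T dV_s$, the reflected BSDE (\ref{equation-RBSDE.with.non-circular.driver}) is equivalent to requiring that $d\tilde Y = -dK + dN$ with $\tilde Y_T = \tilde\xi$, $\tilde Y \ge \tilde L$, and the Skorohod condition against $\tilde Y - \tilde L$. I would then define
\begin{align*}
\tilde Y_t := \esssup_{\tau \in \mathcal{T}_t^T} E\Big[ \tilde L_\tau \, 1_{\{\tau < T\}} + \tilde \xi\, 1_{\{\tau = T\}} \,\Big|\, \F_t \Big],
\end{align*}
the essential supremum being well-defined because $V \in L^1_{BMO}$ (so $\int_0^T |dV_s| \in L^1$), $L$ is upper bounded and $\xi \in L^\infty$. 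By continuity of $L$ and $V$ (and the implicit compatibility $\xi \ge L_T$), $\tilde L$ is continuous up to time $T$, and the classical theory of Snell envelopes of continuous processes (see e.g.\ \cite{EKnKnPnPnQ}) provides the Doob--Meyer decomposition $\tilde Y = \tilde Y_0 + N - K$ with $N$ a continuous martingale starting from $0$ and $K \in \mathcal{A}$ satisfying $\int_0^T 1_{\{\tilde Y_s > \tilde L_s\}} dK_s = 0$. Setting $Y := \tilde Y - \int_0^\cdot dV$ then yields a candidate solution to (\ref{equation-RBSDE.with.non-circular.driver}).

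For $Y \in \mathcal{S}^\infty$, I would subtract $\int_0^t dV_s$ inside the essential supremum to rewrite
\begin{align*}
Y_t = \esssup_{\tau \in \mathcal{T}_t^T} E\Big[ L_\tau \, 1_{\{\tau < T\}} + \xi \, 1_{\{\tau = T\}} + \int_t^\tau dV_s \,\Big|\, \F_t \Big],
\end{align*}
and bound $|Y_t|$ uniformly using the upper bound on $L$, $\norm{\xi}_\infty$, the $L^1_{BMO}$ norm of $V$, and (for a lower bound) the specific choice $\tau = T$. Once $Y$ is known to lie in $\mathcal{S}^\infty$, remark \ref{remark-aprioriestimate} applied to the decomposition $dY = -dV - dK + dN$ (with $K$ monotonous and $V \in L^1_{BMO}$) yields $N \in BMO$ and $K \in \mathcal{A}_{BMO}$. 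Uniqueness follows from the uniqueness of the Snell envelope, which pins down $\tilde Y$ hence $Y$, together with the uniqueness of the Doob--Meyer decomposition.

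For the ``In particular'' statement, what remains is to verify $V(y,n) \in L^1_{BMO}$ whenever $(y,n) \in \mathcal{S}^\infty \times BMO$. The quadratic term is handled using $(\mathbf{A_{qg}})$ with $\Lambda := \lambda(\norm{y}_{\S^\infty})$: the bound $|f(s,y_s,z_s\sigma_s)|\, dC_s \le \Lambda (h_s^2 dC_s + d\bracket{n}_s)$, combined with $h \in L^2_{BMO}$ and $n \in BMO$, gives uniformly bounded conditional expectations. The cross term $|d\bracket{\nu,n^\perp}|$ is controlled by Kunita--Watanabe followed by conditional Cauchy--Schwarz (using $\nu, n^\perp \in BMO$), and the last term by $\norm{g}_\infty$ times the BMO norm of $n^\perp$. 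The main point of care throughout is the BMO/$L^1_{BMO}$ bookkeeping needed to establish $Y \in \mathcal{S}^\infty$ and to trigger the a priori estimate; the existence step itself reduces to a standard invocation of continuous Snell envelope theory, and no deeper idea is needed.
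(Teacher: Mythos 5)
Your proposal is correct and follows essentially the same route as the paper: both identify $Y+\int_0^\cdot dV$ as the Snell envelope (citing the classical theory of El Karoui \textit{et al.}), obtain the $\S^\infty$ bounds from the esssup representation (your choice $\tau=T$ for the lower bound is equivalent to the paper's use of the equation with $K$ increasing), invoke remark \ref{remark-aprioriestimate} for $N\in BMO$, and verify $V(y,n)\in L^1_{BMO}$ via $(\mathbf{A_{qg}})$ and Kunita--Watanabe. No substantive difference.
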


	\begin{proof}
		We know from proposition 5.1 in El Karoui \textit{et al.} \cite{EKnKnPnPnQ} that $Y_t$ is given by 
			\begin{align} \label{equation-explicit.solution.underlying.problem}
				Y_t = \esssup_{\tau \in \mathcal{T}_t^T} E\bigg( \int_t^\tau dV_s + L_\tau 1_{\tau<T} + \xi 1_{\tau=T} \Big| \F_t \bigg) \ ,
			\end{align}
		where $\mathcal{T}_t^T$ are the stopping times $\tau$ such that $t\le\tau\le T$, and that the square integrable martingale $N$ and the increasing process $K$ are the Doob-Meyer decomposition of the supermartingale $Y+V$. Our goal is to check that 
		$(Y,N)$ is indeed in $\S^\infty \times BMO$.
		For an upper bound on $Y_t$, we have
			\begin{align*}
				E\bigg( \int_t^\tau dV_s +  L_\tau 1_{\tau<T} + \xi 1_{\tau=T} \Big| \F_t \bigg)  &\le E\bigg( \int_t^T \abs{dV_s} \Big| \F_t \bigg) + E\bigg( L_\tau^+ \Big| \F_t \bigg) 	
							+ E\bigg( \xi^+ \Big| \F_t \bigg) \\
				&\le \norm{V}_{L^1_{BMO}} + \norm{L^+}_{\infty} + \norm{\xi^+}
			\end{align*}
		for any stopping time $\tau$, so $Y_t \le \norm{V}_{L^1_{BMO}} + \norm{L^+}_{\infty} + \norm{\xi^+}$. 
		For a lower bound, since $Y$ solves (\ref{equation-RBSDE.with.non-circular.driver}), and using the fact that $K$ is increasing, we have
			\begin{align*}
				Y_t &= E\bigg( \xi + \int_t^T dV_s + \big(K_T - K_t \big) \Big| \F_t \bigg)														\\
				&\ge	 E\bigg( \xi + \int_t^T dV_s  \Big| \F_t \bigg)															\\
				&\ge -\norm{\xi^-}_\infty - \norm{V}_{L^1_{BMO}} \ ,
			\end{align*}	
		so $Y$ is indeed in $\S^\infty$.		
		One can then invoke remark \ref{remark-aprioriestimate} after proposition \ref{proposition-aprioriestimate} to conclude that $N \in BMO$. 
		
		\paragraph*{} 
		We now prove the second assertion. For a drift process $V$ of the form described above,
			\begin{align*}
				\abs{dV_s} \le \abs{f(s,y_s,z_s\sigma_s)} dC_s + \abs{d\bracket{\nu,n^\perp}_s} + \abs{g_s} \, \abs{d\bracket{n^\perp}_s}.
			\end{align*}
		Using the assumption $(\mathbf{A_{qg}})$ on $f$ and the Kunita-Watanabe inequality, 
		we have, similarly as in proposition \ref{proposition-aprioriestimate}, 
			\begin{align*}
				\Et{ \int_t^T \abs{dV_s} } &\le \lambda(\norm{y}_{\S^\infty}) \Et{ \int_t^T h_s^2 + \abs{z_s\sigma_s}^2 dC_s}  \\
									&\qquad + \Et{ \int_t^T \abs{d\bracket{\nu, n^\perp}_s} } + \norm{g}_\infty \Et{ \int_t^T d\bracket{n^\perp}_s }	\\
					&\le \Lambda \norm{h}_{L^2_{BMO}}^2 + \Big( \Lambda + 1 + \norm{g}_\infty \Big) \norm{n}_{BMO}^2 + \norm{\nu}_{BMO}^2 ,
			\end{align*}
		where $\Lambda = \lambda(\norm{y}_{\S^\infty}) $. Hence $V \in L^1_{BMO}$ as wanted.
	\end{proof}
	
%***********************************************************************************************************************************************************************************************************
\subsubsection{Existence for RBSDEs with small data.}

First one proves that there is a solution when the data are small and when, essentially, the drift is purely quadratic in the solution.

	\begin{proposition}				\label{proposition-smallRBSDE.purelyquadratic}
		Let $\lambda > 0$. Let $f$ satisfy assumption $(\mathbf{A_{locLipz}})$, with parameters $(\beta=0, \gamma=0, \lambda, r)$ and be such $\alpha = f(\cdot,0,0) \in L^{\infty,1}$ (i.e. : $\int_0^T \abs{\alpha_s}dC_S \in L^\infty $). Let $\nu=0 \in BMO$ and $g$ be bounded by $\lambda$. There exists $\epsilon_0=\epsilon_0(\lambda,r) > 0$ such that if the size of the data
			\begin{align*}
				\boldsymbol{D} =  \norm{\xi}_{\infty} + \norm{f(\cdot,0,0)}_{\infty,1} + \norm{L^+}_\infty \le \epsilon_0 \ ,
			\end{align*}
		then there exists a solution $S=(Y,N,K) \in \mathcal{S}^\infty \times BMO(P) \times \mathcal{A}$ to the reflected BSDE (\ref{equation-RBSDE.of.reference}) with data $(V,\xi,L)$, where $dV(Y,N)_s =  f(s,Y_s,Z_s \sigma_s)dC_s + g_s d\langle N^\perp \rangle_s$ .
		
		More precisely,  
			\begin{align*}
				\epsilon_0(\lambda,r) = \frac{1}{2^{10} \lambda \big( \norm{r}_{\infty,2}^2 + 2 \big)} \ .
			\end{align*}
		
		Also, for any $R \le R_0(\lambda,r) = \frac{1}{2^{5} \lambda \big( \norm{r}_{\infty,2}^2 + 2\big)}$, if $\boldsymbol{D} \le \frac{R}{2^{5}}$, then this solution is known to satisfy
			\begin{align*}
				\norm{S}^2 = \norm{Y}^2_{\S^\infty} + \norm{N}^2_{BMO(P)} \le R^2 \ .
			\end{align*}
	\end{proposition}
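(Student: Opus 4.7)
The plan is to apply the Banach fixed point theorem on a closed ball $\overline{B}(0,R)$ of $\mathcal{S}^\infty \times BMO$ for an appropriate radius $R \le R_0$. For $(y,n)$ in such a ball, define $\Phi(y,n) = (Y,N)$ to be the solution of the reflected BSDE \eqref{equation-RBSDE.with.non-circular.driver} with fixed driver $dV(y,n)_s = f(s,y_s,z_s\sigma_s)dC_s + g_s d\langle n^\perp\rangle_s$. Proposition \ref{proposition-underlyingproblem} ensures that $\Phi$ is well defined as a map into $\mathcal{S}^\infty \times BMO$, provided one checks that $V(y,n) \in L^1_{BMO}$, which one does by using the assumption $(\mathbf{A_{locLip}})$ with $\beta = \gamma = 0$: it gives $|f(s,y,z)| \le |\alpha_s| + \lambda(r_s|y| + |z|)^2$, hence $\|V(y,n)\|_{L^1_{BMO}} \le \|\alpha\|_{\infty,1} + \lambda R^2 \cdot A$ with $A = 2\|r\|_{\infty,2}^2 + 3$ (bounding $r_s^2 y_s^2$ by $\|y\|^2_{\mathcal{S}^\infty}\|r\|^2_{\infty,2}$, $|z_s\sigma_s|^2 dC_s$ and $d\langle n^\perp\rangle_s$ by $d\langle n\rangle_s$, and using $\|g\|_\infty \le \lambda$).

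The first main step is the \emph{self-mapping} estimate. Using the Snell envelope formula \eqref{equation-explicit.solution.underlying.problem}, one obtains $\|Y\|_{\mathcal{S}^\infty} \le \boldsymbol{D} + \|V(y,n)\|_{L^1_{BMO}} \le \boldsymbol{D} + \lambda R^2 A$. For the BMO norm of $N$, apply It\^o to $Y^2$ between a stopping time $t$ and $T$: this yields
\begin{align*}
E\bigl( \langle N\rangle_T - \langle N\rangle_t \bigm| \F_t\bigr) &\le \|\xi\|_\infty^2 + 2\|Y\|_{\mathcal{S}^\infty}\bigl(\|V\|_{L^1_{BMO}} + \|K\|_{\mathcal{A}_{BMO}}\bigr),
\end{align*}
and the dynamics equation together with $K \ge 0$ gives $\|K\|_{\mathcal{A}_{BMO}} \le 2\|Y\|_{\mathcal{S}^\infty} + \|V\|_{L^1_{BMO}}$. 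Combining these, $\|\Phi(y,n)\|^2 \le C_1 \boldsymbol{D}^2 + C_2 \lambda^2 A^2 R^4$, so choosing $R \le R_0 \sim (\lambda A)^{-1}$ and $\boldsymbol{D} \lesssim R$ ensures $\|\Phi(y,n)\|^2 \le R^2$. Tracking the constants carefully reproduces the explicit values $R_0 = \tfrac{1}{2^5 \lambda(\|r\|_{\infty,2}^2+2)}$ and $\epsilon_0 = R_0/2^5$ announced in the statement.

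The second main step, which I expect to be the main obstacle, is the \emph{contraction} estimate. For $(y,n), (y',n') \in \overline{B}(0,R)$, one first uses the stability of the Snell envelope to get $\|Y - Y'\|_{\mathcal{S}^\infty} \le \|V(y,n) - V(y',n')\|_{L^1_{BMO}}$. The driver difference is then controlled via $(\mathbf{A_{locLip}})$:
\begin{align*}
|f(s,y,z) - f(s,y',z')| \le \lambda \bigl( r_s(|y|+|y'|) + |z|+|z'| \bigr) \bigl( r_s|\Delta y| + |\Delta z|\bigr),
\end{align*}
whereupon a conditional Cauchy--Schwarz bound, together with Kunita--Watanabe applied to the $g$-term $g \, d\langle n^\perp - (n')^\perp, n^\perp + (n')^\perp\rangle$, produces a bound of the form $\|V - V'\|_{L^1_{BMO}} \le C\lambda A R \cdot \|(y,n) - (y',n')\|$. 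Re-applying It\^o to $(\Delta Y)^2$ as above (noting that $\Delta Y$ is a genuine semimartingale even if $\Delta K$ is not monotone, but one can still use the Skorohod-type orthogonality $\int (Y - Y') dK \le \int (L - Y') dK \le 0$ and the symmetric term) yields $\|\Delta N\|_{BMO}^2 \le C' \lambda A R \cdot \|\Delta S\|^2$, so the total Lipschitz constant is of order $\lambda A R$, strictly less than $1$ when $R \le R_0$. Banach's fixed point theorem then delivers a solution $S \in \overline{B}(0,R)$, and one recovers the associated $K$ from the dynamics equation; it is automatically increasing and satisfies the Skorohod condition since it came from Proposition \ref{proposition-underlyingproblem}. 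The delicate points to keep track of are the exact numerical constants (to hit the stated $2^{10}$) and the handling of $\Delta K$ in the $(\Delta Y)^2$ computation, where one must exploit the Skorohod conditions of both solutions.
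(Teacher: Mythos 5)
Your proposal is correct and follows essentially the same route as the paper: a Banach fixed point argument on a small ball of $\S^\infty \times BMO$, with the self-map and contraction estimates obtained by applying It\^o's formula to $Y^2$ and $(\Delta Y)^2$ and disposing of the reflection terms via the Skorohod conditions ($\int \Delta Y\, d\Delta K \le 0$ and $\int Y\,dK = \int L\,dK \le \norm{L^+}_\infty (K_T-K_t)$ with $K_T-K_t$ controlled through the dynamics), exactly as in the paper's analysis of the map $Sol$. The only cosmetic differences are your use of the Snell-envelope stability for the $\S^\infty$ bounds (the paper extracts these from the same It\^o computation) and a power slip in the intermediate display $\norm{\Delta N}_{BMO}^2 \le C'\lambda A R \norm{\Delta S}^2$ (the right-hand side should carry $(\lambda A R)^2$), which does not affect your stated conclusion that the Lipschitz constant is of order $\lambda A R$.
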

	
	\begin{proof}
		We study the map $Sol : \mathcal{S}^\infty \times BMO \rightarrow \mathcal{S}^\infty \times BMO$ which sends $(y,n)$ on the solution $(Y,N)$ to the reflected BSDE  
			\begin{align} \label{equation-RBSDE.with.token.variables.in.driver} 
				\left\{ \begin{aligned}
					dY &= -dV(y,n) - dK + dN	\ ,\\
					Y_T &=\xi	\ ,\\
					Y &\ge 0 \ \text{and}\ 1_{\{Y>0\}} dK=0 \ ,
				\end{aligned} \right.
			\end{align}
			where $dV(y,n)_s =  f(s,y_s,z_s \sigma_s)dC_s + g_s d\langle n^\perp \rangle_s$ .
		This map is well defined according to proposition \ref{proposition-underlyingproblem}, and $(Y,N) \in \mathcal{S}^\infty \times BMO$ is a solution of (\ref{equation-RBSDE.of.reference}) if and only if it is a fixed point of $Sol$. It will be seen that $Sol$ is not a contraction on the whole space, but it is on a small ball, and it stabilizes such a small ball if the data are small enough. Therefore there exists at least one fixed point in the space.
		
		\paragraph*{}		
		We study first the regularity of $Sol$. Take $s=(y,n)$ and $s'=(y',n')$ in $\mathcal{S}^\infty \times BMO$, write $S=Sol(s)$, $S'=Sol(s')$, and $\Delta x=x'-x$ for a generic quantity $x$. The  semimartingale decomposition of $\Delta Y$ is $d\Delta Y = -d\Delta V - d\Delta K + d\Delta N$, and the terminal value is $0$. Therefore, applying Itô's formula to $(\Delta Y)^2$ between $t \in \mathcal{T}_0^T$ and $T$, and taking the expectation conditional to $\F_t$ one has, since $\int_0^\cdot \Delta Y d\Delta N$ is a martingale,
			\begin{align} \label{equation-small.Ito.fixedpoint.DeltaSol}
				(\Delta Y_t)^2 + E \Big( \int_t^T d\langle \Delta N \rangle_s | \F_t \Big) &= 0^2 + 2  E \Big( \int_t^T \Delta Y_s d\Delta V_s | \F_t \Big) \\
					&\quad + 2  E \Big( \int_t^T \Delta Y_s d\Delta K_s  | \F_t \Big) - 0 \ .																									\notag
			\end{align} 
		Let us now look at the third term on the right-hand side. Using the fact that $YdK=LdK$ and $Y'dK'=LdK'$ one has 
			\begin{align*}
				\Delta Y d\Delta K &= (Y'-Y)dK' - (Y'-Y)dK																								\\
				&= \underbrace{(L-Y)}_{\le 0}\underbrace{dK'}_{\ge 0} - \underbrace{(Y'-L)}_{\ge 0}\underbrace{dK}_{\ge 0}	\le 0 \ .
			\end{align*}
		Let us now deal with the second term : 
			\begin{align*}
				E \Big( \int_t^T \Delta Y_s d\Delta V_s | \F_t \Big) \le \norm{\Delta Y}_\infty E \Big( \int_t^T \abs{d\Delta V_s} | \F_t \Big) \ .
			\end{align*}
		The assumption on $f$ gives
			\begin{align*}
				\abs{d\Delta V_s} &\le \lambda \Big( r_s\abs{y_s}+r_s\abs{y'_s}+\abs{z_s \sigma_s} +\abs{z'_s \sigma_s} \Big) \big( r_s\abs{\Delta y_s} + \abs{\Delta z_s \sigma_s} \big) dC_s	\\
					&\qquad  + \abs{g_s} \abs{d \langle \Delta n^\perp, n^\perp+(n')^\perp \rangle_s} .
			\end{align*}
		Consequently, using the Cauchy-Schwartz and the Kunita-Watanabe inequalities, and the elementary inequality $(\sum_{i=1}^n a_i)^2 \le n \sum a_i^2$, we have
			\begin{align*}
				E \bigg(& \int_t^T \abs{d\Delta V} \Big| \F_t \bigg)			\\
					& \le 2^{\frac{3}{2}}\lambda \Et{ \int_t^T r_s^2\abs{y_s}^2 +r_s^2\abs{y'_s}^2 + \abs{z_s\sigma_s}^2 + \abs{z'_s\sigma_s}^2 \ dC_s }^\half 
							\Et{ \int_t^T r_s^2 \abs{\Delta y_s}^2 	+ \abs{\Delta z_s \sigma_s}^2 \ dC_s }^\half \\
											&\qquad + \norm{g}_\infty \Et{ \int_t^T d\bracket{\Delta n^\perp}_s}^{\half} \Et{ \int_t^T d\bracket{n^\perp + (n')^\perp}_s }^{\half}\ .
			\end{align*}
		Now, by orthogonality, one has $\abs{z_s\sigma_s}^2 dC_s + d\bracket{n^\perp}_s = d\bracket{n}_s$ so in particular each term on the left-hand side of this equation is smaller than or equal to the right-hand side. So
			\begin{align*}
				E &\bigg( \int_t^T \abs{d\Delta V} \Big| \F_t \bigg) 	\\
					& \le 2^{\frac{3}{2}}\lambda \Et{ \int_t^T \Big( r_s^2\abs{y_s}^2 +r_s^2\abs{y'_s}^2\Big) dC_s + d\bracket{n}_s + d\bracket{n'}_s  }^\half 
							\Et{ \int_t^T r_s^2 \abs{\Delta y_s}^2 dC_s + d\bracket{\Delta n}_s  }^\half \\
											&\qquad + \norm{g}_\infty \Et{ \int_t^T d\bracket{\Delta n}_s}^{\half} \Et{ \int_t^T d\bracket{n+(n')}_s }^{\half} \\
%					&\le 	2^{\frac{3}{2}}\lambda \bigg( \Et{ \int_t^T r_s^2\norm{y}_{\S^\infty}^2 +r_s^2\norm{y}_{\S^\infty}^2 \ dC_s } 
%											+ \Et{ \int_t^T d\bracket{n'}_s + d\bracket{n}_s } \bigg)^\half		\\
%								&\hspace{4cm} \times \bigg( \Et{ \int_t^T r_s^2 \norm{\Delta y}_{\S^\infty}^2 \ dC_s } + \Et{ \int_t^T d\bracket{\Delta n} } \bigg)^\half \\
%								&\qquad + \norm{g}_\infty \Et{ \int_t^T d\bracket{\Delta n}_s}^{\half} \Et{ \int_t^T d\bracket{n'+n}_s }^{\half} \\
					&\le 2^{\frac{3}{2}}\lambda \Big( \norm{r}_{\infty,2}^2 \norm{y}_{\S^\infty}^2 + \norm{r}_{\infty,2}^2 \norm{y'}_{\S^\infty}^2+ \norm{n}_{BMO}^2 + \norm{n'}_{BMO}^2  \Big)^\half	
											\Big( \norm{r}_{\infty,2}^2 \norm{\Delta y}_{\S^\infty}^2 + \norm{\Delta n}_{BMO}^2 \Big)^\half		\\
								&\qquad + \norm{g}_\infty \norm{\Delta n}_{BMO} \ \norm{n+n'}_{BMO}	\\
					&\le 2^{\frac{3}{2}}\lambda\  \big( \norm{r}_{\infty,2}^2 + 1 \big) \Big( \norm{y}_{\S^\infty}^2 + \norm{n}_{BMO}^2 + \norm{y'}_{\S^\infty}^2 + \norm{n'}_{BMO}^2 \Big)^\half 
											\Big( \norm{\Delta y}_{\S^\infty}^2 + \norm{\Delta n}_{BMO}^2 \Big)^\half \\
								&\qquad + \norm{g}_\infty \Big( \norm{n}_{BMO} + \norm{n'}_{BMO} \Big) \norm{\Delta n}_{BMO}  .
			\end{align*}
		Now, by definition of the norm on $\S^\infty \times BMO$, $\norm{y}_{\S^\infty}^2+\norm{n}_{BMO}^2=\norm{s}^2$. 
		Again, this implies in particular that $\norm{n}_{BMO}^2 \le \norm{s}^2$. So, recalling that $\norm{g}_\infty \le \lambda$, using $(a^2+b^2)^\half \le a + b$ and majorizing $1 \le 2^{\frac{3}{2}}$ (for the 2nd inequality), we have
			\begin{align*}
				E \bigg( \int_t^T \abs{d\Delta V} \Big| \F_t \bigg) 	
					&\le 2^{\frac{3}{2}}\lambda\  \big( \norm{r}_{\infty,2}^2 + 1 \big) \Big( \norm{s}^2 + \norm{s'}^2  \Big)^\half \Big( \norm{\Delta s}^2 \Big)^\half 
											+ \lambda \Big( \norm{s} + \norm{s'} \Big) \norm{\Delta s} \\
					&\le 	2^{\frac{3}{2}}\lambda\  \big( \norm{r}_{\infty,2}^2 + 1 \big) \big( \norm{s} + \norm{s'} \big) \norm{\Delta s}		
											+ 2^{\frac{3}{2}}\lambda\  \big( \norm{s} + \norm{s'} \big) \norm{\Delta s}  \\
					&\le 2^{\frac{3}{2}}\lambda \big( \norm{r}_{\infty,2}^2 + 1 + 1 \big)\big( \norm{s'} + \norm{s} \big) \norm{\Delta s} \ .
			\end{align*}
		Equation (\ref{equation-small.Ito.fixedpoint.DeltaSol}) then yields, using $2ab \le \frac{1}{4}a^2 + 4b^2$ and $(a+b)^2 \le 2 (a^2+b^2)$,
			\begin{align*}
				(\Delta Y_t)^2 + \Et{ \int_t^T d \langle \Delta N \rangle_s} &\le \frac{1}{4}\norm{\Delta Y}_{\S^\infty}^2 \\
							&\qquad + 4 \times 2^3\lambda^2 \big( \norm{r}_{\infty,2}^2 + 2 \big)^2 \times 2 \big(\norm{s'}^2 + \norm{s}^2\big)\norm{\Delta s}^2 \ ,
			\end{align*}
		and by taking the sup, we finally find, since $\norm{\Delta Y}_{\S^\infty} \le \norm{\Delta S}$, that
			\begin{align} \label{equation-small.fixedpoint.DeltaSol.final.estimate} 
				\norm{\Delta S}^2 \le 2^8 \lambda^2 \big( \norm{r}_{\infty,2}^2 + 2 \big)^2 \big( \norm{s}^2 +\norm{s'}^2 \big) \norm{\Delta s}^2 \ .
			\end{align}			

		\paragraph*{}
		Let us now study the size of $S=Sol(s)$. Following the very same computations and arguments as for $\Delta S$ we have first 
			\begin{align} \label{equation-small.Ito.fixedpoint.SizeSol}
				(Y_t)^2 + E \Big( \int_t^T d\langle N \rangle_s | \F_t \Big)  \le \norm{\xi}_\infty^2 +  2 E \bigg( \int_t^T Y_s d V_s \Big| \F_t \bigg) + 2 E \bigg( \int_t^T Y_s d K_s \Big| \F_t \bigg).
			\end{align}	
		Since $YdK=LdK$ and, importantly, since $K$ is increasing, one can write
			\begin{align*}
				\int_t^T Y_s dK_s = \int_t^T L_s dK_s &\le \norm{L^+}_{\S^\infty} \big( K_T - K_t \big)	 = \norm{L^+}_{\S^\infty} \bigg( Y_t - \xi  -\int_t^T dV + (N_T-N_t) \bigg) \ ,	
			\end{align*}
		so that
			\begin{align*}
				\Et{\int_t^T YdK} \le \norm{L^+}_{\S^\infty} \abs{Y_t} + \norm{L^+}_{\S^\infty} \norm{\xi}_\infty + \norm{L^+}_{\S^\infty}  \Et{\int_t^T \abs{dV}} + 0	\ .
			\end{align*}
		Reinjecting this into (\ref{equation-small.Ito.fixedpoint.SizeSol}), then using the Young inequality, in particular the case $2ab \le 8a^2+\frac{1}{8}b^2$, leads to 
			\begin{align*} 
				(Y_t)^2 + E \bigg( \int_t^T d\langle N \rangle_s | \F_t \bigg)  \le 
										\Big( 2\norm{\xi}_\infty^2 + 10 \norm{L^+}_{\S^\infty}^2 \Big) + \frac{1}{4}\norm{Y}_{\S^\infty}^2 + 9 \Et{\int_t^T \abs{dV}}^2 \ .
			\end{align*}
		Now, by the assumption on $f$, 
			\begin{align*}
				\abs{dV_s} &\le \Big[ f(s,0,0) + \lambda \big( r_s\abs{y_s} + \abs{z_s \sigma_s} \big)^2  \Big]dC_s + \abs{g_s}d\langle n^\perp \rangle_s \\
					&\le \Big[ f(s,0,0) + 2 \lambda \big( r_s^2 \abs{y_s}^2 + \abs{z_s \sigma_s}^2 \big)  \Big]dC_s + \abs{g_s}d\langle n^\perp \rangle_s 
			\end{align*}	
		so, by the same argumentation as for $\Delta V$ above, 
			\begin{align*}
				\Et{\int_t^T \abs{dV}} &\le \norm{f(\cdot,0,0)}_{\infty,1} + 2 \lambda \big( \norm{r}_{\infty,2}^2 + 1 \big) \norm{s}^2 + \lambda \norm{s}^2			\\
				&\le \norm{f(\cdot,0,0)}_{\infty,1} + 2 \lambda \big( \norm{r}_{\infty,2}^2 + 2 \big) \norm{s}^2 .
			\end{align*}
		Consequently, after taking $\sup_t$ and using $\norm{Y}_{\S^\infty} \le \norm{S}$, one has
			\begin{align*}
				\norm{S}^2  \le \Big( 4 \norm{\xi}_\infty^2 + 20 \norm{L^+}_{\S^\infty}^2 \Big) + \frac{1}{2}\norm{S}^2 
										+ 18 \times 2 \times \Big[ \norm{f(\cdot,0,0)}_{\infty,1}^2 + 2^2 \lambda^2 \big( \norm{r}_{\infty,2}^2 + 2 \big)^2 \norm{s}^4 \Big] \ .
			\end{align*}		
		Collecting the terms in $\norm{S}^2$ and majorizing largely one has finally
			\begin{align} \label{equation-small.fixedpoint.SizeSol.final.estimate}
				\norm{S}^2 &\le 2^9 \boldsymbol{D}^2 + 2^9 \lambda^2 \big( \norm{r}_{\infty,2}^2 + 2 \big)^2\norm{s}^4		\ ,
			\end{align}
		where $\boldsymbol{D} = \norm{\xi}_\infty + \norm{L^+}_{\S^\infty} + \norm{f(\cdot,0,0)}_{\infty,1}$ and we used $a^2+b^2+c^2 \le (a+b+c)^2$.
		
		To have $Sol$ be a contraction on a closed (and therefore complete) ball $\overline{B}(0,R)$ of $\mathcal{S}^\infty \times BMO$, 
		we see from (\ref{equation-small.fixedpoint.DeltaSol.final.estimate}) and (\ref{equation-small.fixedpoint.SizeSol.final.estimate}) 
		that we would like the radius $R$ and the size $\boldsymbol{D}$ of the data
		to be sufficiently small so that  
		$2^{9} \lambda^2 \big( \norm{r}_{\infty,2}^2 + 2 \big)^2 R^2 \le \half (<1)$ and $2^9 \boldsymbol{D}^2 + 2^9 \lambda^2 \big( \norm{r}^2 + 2 \big)^2 R^4 \le R^2$. This is the case as soon as
			\begin{align*}
				R &\le R_0(\lambda,r) := \frac{1}{2^5 \lambda \big( \norm{r}_{\infty,2}^2 + 2 \big)} \\
				\boldsymbol{D} &\le \frac{R}{2^5} \le \frac{R_0(\lambda,r)}{2^5} =: \epsilon_0(\lambda,r) \ .
			\end{align*}
	\end{proof}

We now remove the assumption that the linear terms in the drift are null. 

	\begin{proposition}				\label{proposition-smallRBSDE}
		Let $\lambda > 0$. Let $f$ satisfy assumption $(\mathbf{A_{locLipz}})$, with parameters $(\beta, \gamma, \lambda, r)$ and be such that $\alpha = f(\cdot,0,0) \in L^{\infty,1}$ (i.e. : $\int_0^T \abs{f(s,0,0)}dC_s \in L^\infty $). Let $\nu \in BMO$ and $g$ be bounded by $\lambda$. There exists $\epsilon_0=\epsilon_0(\beta,\lambda,r) > 0$ such that if the size of the data
			\begin{align*}
				\boldsymbol{D} =  \norm{\xi}_{\infty} + \norm{f(\cdot,0,0)}_{\infty,1} + \norm{L^+}_\infty \le \epsilon_0 \ ,
			\end{align*}
		then there exists a solution $S=(Y,N,K) \in \mathcal{S}^\infty \times BMO(P) \times \mathcal{A}$ to the reflected BSDE (\ref{equation-RBSDE.of.reference}) with data $(f,\nu,g,\xi,L)$.
		
		More precisely, 
			\begin{align*}
				\epsilon_0(\beta,\lambda,r)  = \frac{e^{-2\norm{\beta}_{\infty,1}}}{2^{10} \lambda \big( \norm{r}_{\infty,2}^2 + 2 \big)} \ .
			\end{align*}
		
		Also, for any $R \le R_0(\widehat{\lambda},r) = \frac{1}{2^{5} \widehat{\lambda} \big(  \norm{r}_{\infty,2}^2 + 2 \big)}$, 
		where $\widehat{\lambda}=\exp\big(\norm{\beta}_{\infty,1}\big) \lambda$, 
		if $\boldsymbol{D} \le \exp(-\norm{\beta}_{\infty,1}) \frac{R}{2^{5}}$, then this solution is known to satisfy
			\begin{align*}
				\norm{\widehat{S}}_Q^2 = \norm{\widehat{Y}}^2_{\S^\infty} + \norm{\widehat{\widetilde{N}}}^2_{BMO(Q)} \le R^2 \ ,
			\end{align*}
		where $\widehat{Y}_t=e^{\int_0^t \beta_u dC_u} Y_t$ and $\widetilde{\widehat{N}}$ is the martingale part of $\widehat{Y}$ under $Q$ : $\frac{dQ}{dP}=\mathcal{E}(\int \gamma \sigma^{-1} dM + \nu)$.
	\end{proposition}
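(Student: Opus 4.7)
The plan is to reduce to Proposition \ref{proposition-smallRBSDE.purelyquadratic} via two classical linearization tricks: an integrating factor $B_t := \exp(\int_0^t \beta_u\, dC_u)$ to absorb the $\beta_s Y_s$ linear drift, and a Girsanov change of measure from $P$ to $Q$ defined by $\frac{dQ}{dP}=\Exp(\int \gamma \sigma^{-1} dM + \nu)_T$ to absorb both the $\gamma_s Z_s\sigma_s\, dC$ term and the orthogonal cross term $d\bracket{\nu, N^\perp}$. Both transformations are admissible: $B$ is bounded above and below by $e^{\pm\norm{\beta}_{\infty,1}}$ since $\beta \in L^{\infty,1}$; and because $\gamma \in L^2_{BMO}$, $\sigma^{-1}$ is bounded, and $\nu \in BMO$ is orthogonal to $M$, the exponent lies in $BMO(P)$, so Kazamaki's criterion gives that $\Exp(\cdot)$ is a uniformly integrable martingale defining an equivalent measure $Q$, under which $\widetilde{M}=M-\int\sigma\gamma^*dC$ and $N^\perp-\bracket{\nu,N^\perp}$ are local martingales.

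First I would write $f(s,y,z) = f(s,0,0)+\beta_s y+\gamma_s z+\widetilde{f}(s,y,z)$, noting that $(\mathbf{A_{locLipz}})$ forces $\widetilde{f}(s,0,0)=0$ and that $\widetilde{f}$ itself satisfies $(\mathbf{A_{locLipz}})$ with \emph{vanishing} linear coefficients and the same constant $\lambda$. Applying It\^o's formula to $\widehat{Y}_t := B_t Y_t$, the drift contribution $B_t \beta_t Y_t\, dC_t$ from $dB\cdot Y$ cancels exactly with the term $-B_t \beta_t Y_t\, dC_t$ coming from the linearized piece of $-B\,dV$. Under $Q$ the remaining $\gamma_s Z_s\sigma_s\,dC$ piece is absorbed when $\int Z\,dM$ is rewritten as $\int Z\,d\widetilde{M}$, and $d\bracket{\nu,N^\perp}$ is similarly absorbed into the $Q$-martingale part of $N^\perp$. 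The outcome is a reflected system for $(\widehat{Y}, \widehat{\widetilde{N}}, \widehat{K}:=\int B\,dK)$ under $Q$, with terminal value $\widehat{\xi} = B_T\xi$, obstacle $\widehat{L}=BL$ (still upper bounded by $e^{\norm{\beta}_{\infty,1}}\norm{L^+}_\infty$), no remaining orthogonal martingale ($\widehat{\nu}=0$), coefficient $\widehat{g}=Bg$ bounded by $e^{\norm{\beta}_{\infty,1}}\lambda$, and a purely quadratic-in-$\widehat{z}$ driver. The ordering $Y\ge L \Leftrightarrow \widehat{Y}\ge\widehat{L}$ and the Skorohod complementarity are preserved because $B>0$ is continuous and of finite variation.

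It then remains to express the new driver as a function $\widehat{f}(s,\widehat{y},\widehat{z})$ of the transformed unknowns relative to $\widetilde{M}$, and to verify that $\widehat{f}$ still satisfies $(\mathbf{A_{locLipz}})$ with zero linear coefficients and an enlarged constant $\widehat{\lambda}=e^{\norm{\beta}_{\infty,1}}\lambda$, so that Proposition \ref{proposition-smallRBSDE.purelyquadratic} applies in the $Q$-framework. The transformed data size obeys $\widehat{\boldsymbol{D}} \le e^{\norm{\beta}_{\infty,1}}\boldsymbol{D}$, so imposing $\widehat{\boldsymbol{D}} \le \epsilon_0(\widehat{\lambda},r)$ translates into the announced smallness condition $\boldsymbol{D} \le e^{-\norm{\beta}_{\infty,1}}\epsilon_0(\widehat{\lambda},r) = \frac{e^{-2\norm{\beta}_{\infty,1}}}{2^{10}\lambda(\norm{r}_{\infty,2}^2+2)}$, and the a priori estimate on $\norm{\widehat{S}}_Q$ is a direct reading of the one in the previous proposition. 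The main obstacle is the careful bookkeeping of how the quadratic-growth and local-Lipschitz estimates behave under the combined exponential rescaling and measure change, in particular checking that the remainder $\widetilde{f}(s, B_s^{-1}\widehat{y}, \widehat{z})$, once multiplied by the bounded factor $B_s$, still enjoys the purely-quadratic bound of $(\mathbf{A_{locLipz}})$ with constant $\widehat{\lambda}$; the reflection structure itself causes no difficulty.
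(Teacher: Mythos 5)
Your proposal follows essentially the same route as the paper: decompose $f$ into its linearization at $(0,0)$ plus a remainder satisfying $(\mathbf{A_{locLipz}})$ with zero linear coefficients, absorb the $\beta$-term with the integrating factor $B=\exp(\int\beta\,dC)$ and the $\gamma$- and $\nu$-terms with the Girsanov change to $Q=\Exp(\int\gamma\sigma^{-1}dM+\nu)\cdot P$, apply the purely quadratic case under $Q$, and undo the transforms; the constants and smallness conditions come out identically. The only slip is that the transformed coefficient of $d\langle \widehat{N}^\perp\rangle$ should be $B^{-1}g$ rather than $Bg$ (since $d\langle\widehat{N}^\perp\rangle=B^2 d\langle N^\perp\rangle$), but this does not affect the bound $\norm{\widehat{g}}_\infty\le e^{\norm{\beta}_{\infty,1}}\lambda$ and hence changes nothing in the argument.
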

	
	\begin{proof}
		Write $f(t,y,z)= \beta_t y + \gamma_t z + h(t,y,z)$, where $\beta_t=f_y(t,0,0)$ and $\gamma_t=f_z(t,0,0)$  (so that $h(t,0,0)=f(t,0,0)=\alpha_t$). Note that $h$ satisfies $(\mathbf{A_{locLipz}})$ with parameters $(\beta=0, \gamma=0, \lambda, r)$.

		The idea is that if $(Y,N,K)$ is a solution to the reflected BSDE (\ref{equation-RBSDE.of.reference}), 
		one can eliminate the linear terms $(\beta_t Y_t + \gamma_t Z_t\sigma_t)dC_t+d\bracket{\nu,N^\perp}_t $ in the drift $dV(Y,N)_t$ by a pair of transforms and obtain a reflected BSDE with purely quadratic drift. Proposition \ref{proposition-smallRBSDE.purelyquadratic} guarantees the existence of a solution to such a RBSDE, so undoing the transforms yields a solution to (\ref{equation-RBSDE.of.reference}).

		In view of this, let us define the measure $Q$ by $\frac{dQ}{dP} = \Exp(L)$ where $L = \int \gamma \sigma^{-1} dM + \nu$. Then $\widetilde{M} := M - \langle L,M \rangle = 	M - \int \gamma \sigma^* dC$ is a $BMO(Q)$-martingale. Define also $B=\exp\Big( \int_0^\cdot \beta_u dC_u \Big)$, which is a bounded process. Define the transformed data
			\begin{align*}
				&\widehat{h}(s,y,z) = B_s h(s,B_s^{-1} y, B_s^{-1} z)							\ ,\\
				&\widehat{g}_s = B_s^{-1} g_s															\ ,\\
				&\widehat{\xi} = B_T\xi																	\ ,\\
				&\widehat{L} = B L \ .				
			\end{align*}
		Note that $\widehat{h}$ satisfies $(\mathbf{A_{locLipz}})$ with parameters $(\beta=0,\gamma=0,\widehat{\lambda},r)$ where $\widehat{\lambda} = \lambda \exp(\norm{\beta}_{\infty,1})$. Proposition \ref{proposition-smallRBSDE.purelyquadratic} ensures the existence of a solution $(\widehat{Y}, \widehat{\widetilde{N}}, \widehat{K} ) \in \mathcal{S}^\infty \times BMO(Q) \times \mathcal{A} $ under $Q$ to the reflected BSDE (\ref{equation-RBSDE.of.reference}) with transformed data $(\widehat{h}, \nu=0, \widehat{g}, \widehat{\xi}, \widehat{L})$.
		Indeed,
			\begin{align*}
				&\norm{\widehat{g}}_\infty \le \exp(\norm{\beta}_{\infty,1}) \norm{g}_\infty \le  \exp(\norm{\beta}_{\infty,1}) \lambda = \widehat{\lambda}	<+\infty		\ ,\\
				&\norm{\widehat{h}(\cdot,0,0)}_{\infty,1} \le \exp(\norm{\beta}_{\infty,1}) \norm{f(\cdot,0,0)}_{\infty,1} < +\infty \ ,\\
				&\norm{\widehat{\xi}}_\infty \le \exp(\norm{\beta}_{\infty,1}) \norm{\xi}_\infty	<+\infty	\ ,\\
				&\norm{\widehat{L^+}}_{\S^\infty} \le \exp(\norm{\beta}_{\infty,1}) \norm{L^+}_{\S^\infty} <+\infty \ ,
			\end{align*}
		so if $\boldsymbol{D} \le \exp(- \norm{\beta}_{\infty,1}) \epsilon_0(\widehat{\lambda},r)=\exp(- 2\norm{\beta}_{\infty,1}) \epsilon_0(\lambda,r)$, proposition \ref{proposition-smallRBSDE.purelyquadratic} applies.
		
		Now, define $Y = B^{-1} \widehat{Y}$, $\widetilde{N}=\int_0^\cdot B^{-1}d\widehat{\widetilde{N}}=\int \widetilde{Z}d\widetilde{M} + \widetilde{N}^\perp$ and $K=\int_0^\cdot B^{-1}d\widehat{K}$. The Girsanov $(Q \rightarrow P)$-transform of $\widetilde{N}$,
			\begin{align*}
				N &= \widetilde{N} + \bracket{ L,\widetilde{N}} \\
				&= \int \widetilde{Z}d\widetilde{M} + \widetilde{N}^\perp + \int \gamma\widetilde{Z}\sigma dC + \bracket{\nu,\widetilde{N}^\perp}	\\
				&= \int \widetilde{Z}dM + N^\perp \ ,
			\end{align*}
		is a $BMO(P)$-martingale. $Y$ is a bounded semimartingale, since $B^{-1}$ is bounded, and differentiating $Y = B^{-1} \widehat{Y}$ shows that $(Y,N,K)$ is a solution to the reflected BSDE (\ref{equation-RBSDE.of.reference}) with data $(f,\nu,g,\xi,L)$, as we wanted.
	\end{proof}

%//////////////////////////////////////////////////////////////////////////////////////////////////////////////////////////////////////////////////////////////////////////////////////////
%//////////////////////////////////////////////////////////////////////////////////////////////////////////////////////////////////////////////////////////////////////////////////////////
\subsection{Perturbation of a reflected BSDE.}

We now deal with existence for perturbation equations like (\ref{equation-perturbation.equation.of.reference.procedure}). We assume we have a solution $S^1=(Y^1,N^1,K^1)$ to a reflected BSDE with data $(f,\nu,g,\xi^1,L^1)$, and want to construct a solution $\overline{S}^2$ to a reflected BSDE with slightly different data $(f+\alpha^2,\nu,g,\xi^1+\xi^2,L^1+L^2)$. The idea is to construct the difference $S^2 = (Y^2,N^2,K^2)=\overline{S}^2-S^1$. The next proposition shows how this can be done despite the fact that $K^2$ is not an increasing process,  so long as one does not change the obstacle ($L^2=0$).

	\begin{proposition}			\label{proposition-smallperturbationtoaRBSDE}
		Let $f$ satisfy $(\mathbf{A_{der}})$ with parameters $(\rho,\rho',\lambda,r,h)$ and be such that $\alpha=f(\cdot,0,0) \in L^{\infty,1}$, let $g \in L^\infty$ be bounded by $\lambda$ and $\nu \in BMO$. Let also $\xi^1 \in L^\infty$ and $L^1$ be upper bounded. Assume that there exists a solution $S^1=(Y^1,N^1,K^1)$ to the RBSDE (\ref{equation-RBSDE.of.reference}) with data $( f,g,\nu, \xi^1,L^1)$. Now let $\xi^2 \in L^\infty$ and $\alpha^2 \in L^{\infty,1}$ (and $L^2=0$). If
			\begin{align*}
				\delta \boldsymbol{D} = \norm{\xi^2}_{\infty} + \norm{\alpha^2}_{\infty,1} \le \epsilon_0(\rho,2\lambda,r)  
										= \frac{e^{-2\rho\norm{r}_{\infty,2}^2}}{2^{10} (2\lambda) \big( \norm{r}_{\infty,2}^2 + 2 \big)},
			\end{align*}
		then there exist $S^2=(Y^2,N^2,K^2)$ where $Y^2 \in \mathcal{S}^\infty$, $N^2 \in BMO(P)$ and $K^2$ has finite variation, solving the perturbation equation 
			\begin{equation} \label{equation-perturbation.equation.of.reference.notationY2}
				\left\{ \begin{aligned}
					dY^2 &= -dV^2(Y^2,N^2) - dK^2 + dN^2	\ ,\\
					Y^2_T &=\xi^2	\ ,\\
					Y^1 &+Y^2 \ge L^1 + L^2 \ ,\\ 
					dK^1 &+ dK^2 \ge 0 \ \text{and}\ 1_{\{Y^1+Y^2>L^1+L^2\}}(dK^1+dK^2)=0 
				\end{aligned} \right.
			\end{equation}
		with drift given by
			\begin{align*}
				dV^2(Y^2,N^2)_s = \big[ f(s,Y^2_s + Y^1_s,Z^2_s \sigma_s + Z^1_s \sigma_s) - f(s,Y^1_s,Z^1_s \sigma_s) + \alpha^2_s \big]dC_s \\
					\qquad + d\Big\langle \nu + \int 2g d(N^1)^\perp,(N^2)^\perp \Big\rangle_s + g_s d\Big\langle (N^2)^\perp \Big\rangle_s.
			\end{align*}
		
		So $\overline{S}^2 :=S^1 + S^2$ is a solution to the RBSDE (\ref{equation-RBSDE.of.reference}) with data $( f+\alpha^2,g,\nu, \xi^1+\xi^2,L^1)$.
		
		We further know that for any $R \le R_0(\widehat{2\lambda},r) = \frac{1}{2^{5} \widehat{2\lambda} \big(  \norm{r}_{\infty,2}^2 + 2 \big)}$, 
		where $\widehat{2\lambda}=2\lambda \exp\big(\rho \norm{r}_{\infty,2}^2\big) $, 		
		if $\delta \boldsymbol{D} \le \exp(-\rho\norm{r}_{\infty,2}^2) \frac{R}{2^{5}}$, then this solution satisfies
			\begin{align*}
				\norm{\widehat{S^2}}_Q^2 = \norm{\widehat{Y^2}}^2_{\S^\infty} + \norm{\widehat{\widetilde{N^2}}}^2_{BMO(Q)} \le R^2 \ .
			\end{align*}
	\end{proposition}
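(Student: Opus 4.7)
The strategy is to recast the perturbation equation (\ref{equation-perturbation.equation.of.reference.notationY2}) as a standard \emph{small} reflected BSDE in the unknowns $(Y^2, N^2, \overline{K}^2)$ where $\overline{K}^2 := K^1 + K^2$, so that Proposition \ref{proposition-smallRBSDE} can be applied. Introduce the recentred driver
\begin{align*}
\tilde f(s,y,z) &= f(s, Y^1_s + y, Z^1_s\sigma_s + z) - f(s, Y^1_s, Z^1_s\sigma_s) + \alpha^2_s,
\end{align*}
the recentred martingale $\tilde\nu := \nu + 2\int g\,d(N^1)^\perp$ (which coincides with $\overline{\nu}^1$ of the procedure), and the transformed obstacle $\tilde L := L^1 + L^2 - Y^1 = L^1 - Y^1 \le 0$, the inequality holding because $Y^1 \ge L^1$ by assumption. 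Then (\ref{equation-perturbation.equation.of.reference.notationY2}) is precisely the reflected BSDE of data $(\tilde f, \tilde\nu, g, \xi^2, \tilde L)$ for the triple $(Y^2, N^2, \overline{K}^2)$, the Skorohod condition $\mathbf{1}_{\{Y^1+Y^2>L^1+L^2\}}(dK^1+dK^2)=0$ translating into the usual $\mathbf{1}_{\{Y^2>\tilde L\}}d\overline{K}^2=0$.

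The heart of the plan is to verify that $(\tilde f, \tilde\nu, g)$ meet the hypotheses of Proposition \ref{proposition-smallRBSDE}. Setting $\tilde\beta_s := f_y(s, Y^1_s, Z^1_s\sigma_s)$ and $\tilde\gamma_s := f_z(s, Y^1_s, Z^1_s\sigma_s)$ (so $\tilde f_y(s,0,0)=\tilde\beta_s$, $\tilde f_z(s,0,0)=\tilde\gamma_s$, $\tilde f(s,0,0)=\alpha^2_s$), a second-order Taylor expansion of $f$ between $(Y^1,Z^1\sigma)$ and $(Y^1+y_i, Z^1\sigma+z_i)$, using the uniform bounds of $(\mathbf{A_{der}})$ on $f_{yy}, f_{yz}, f_{zz}$, produces the required local-Lipschitz control for $\tilde f$ with constant at most $2\lambda$ (the factor $2$ absorbing the $\tfrac{1}{2}$ from the Taylor remainder against the cross terms produced by expanding $(r|y|+|z|)^2$). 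The linearisation coefficients are well placed: from $(\mathbf{A_{der}})$, $|\tilde\beta_s|\le \rho\, r_s^2$ gives $\|\tilde\beta\|_{\infty,1}\le \rho\|r\|_{\infty,2}^2$, while $|\tilde\gamma_s|\le \rho'(h_s+|Z^1_s\sigma_s|)$ together with $h\in L^2_{BMO}$ and $N^1\in BMO$ (hence $\int Z^1 dM\in BMO$) gives $\tilde\gamma\in L^2_{BMO}$. Finally $\tilde\nu\in BMO$ by the orthogonality of $(N^1)^\perp$ and $M$ together with $\|g\|_\infty\le\lambda$ and $N^1\in BMO$.

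Since $\tilde L\le 0$ the contribution $\|\tilde L^+\|_\infty$ vanishes, so the size of the transformed data is exactly $\delta\boldsymbol{D}=\|\xi^2\|_\infty+\|\alpha^2\|_{\infty,1}$. Proposition \ref{proposition-smallRBSDE} applied with Lipschitz constant $2\lambda$ and $\|\tilde\beta\|_{\infty,1}\le\rho\|r\|_{\infty,2}^2$ gives the threshold
\begin{align*}
\epsilon_0(\rho\|r\|_{\infty,2}^2, 2\lambda, r)
 \ge \frac{e^{-2\rho\|r\|_{\infty,2}^2}}{2^{10}(2\lambda)(\|r\|_{\infty,2}^2+2)},
\end{align*}
which is the threshold in the statement, and yields a solution $(Y^2, N^2, \overline{K}^2)\in\mathcal{S}^\infty\times BMO(P)\times\mathcal{A}$ together with the size estimate in the claimed $\widehat{\,\cdot\,}$-transformed norm. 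Setting $K^2:=\overline{K}^2-K^1$ (which has finite variation but no prescribed sign) produces the triple $(Y^2,N^2,K^2)$ solving (\ref{equation-perturbation.equation.of.reference.notationY2}), and adding $S^1$ gives $\overline{S}^2$ solving the targeted RBSDE by linear superposition of the dynamics and obstacle conditions.

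The main obstacle is bookkeeping: checking $(\mathbf{A_{locLipz}})$ for $\tilde f$ with the sharp constants $(2\lambda, \rho\|r\|_{\infty,2}^2)$, rather than larger multiples, requires identifying exactly what is linearised (the shift by $(Y^1,Z^1\sigma)$, absorbed into $\tilde\beta,\tilde\gamma$) and what remains as a genuine quadratic remainder in $(y,z)$. Everything else is either a direct application of Proposition \ref{proposition-smallRBSDE} or algebraic manipulation to recognise (\ref{equation-perturbation.equation.of.reference.notationY2}) as a RBSDE for the cumulative increasing process $\overline{K}^2$.
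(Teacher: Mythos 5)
Your proposal follows exactly the route that the paper explicitly rules out in the remark placed immediately after the statement of this proposition, and the step where it breaks is the identification ``(\ref{equation-perturbation.equation.of.reference.notationY2}) is precisely the reflected BSDE of data $(\tilde f,\tilde\nu,g,\xi^2,\tilde L)$ for the triple $(Y^2,N^2,\overline{K}^2)$''. That identification is false. Rewriting the dynamics of the perturbation equation in terms of $\overline{K}^2=K^1+K^2$ means substituting $dK^2=d\overline{K}^2-dK^1$, which gives
\begin{align*}
dY^2 = -\big(dV^2(Y^2,N^2)-dK^1\big) - d\overline{K}^2 + dN^2 \ ,
\end{align*}
so the drift of the recast RBSDE is $dV^2-dK^1$, not $dV^2$. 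Its residual part (at $(Y^2,N^2)=(0,0)$) is $\alpha^2\,dC-dK^1$, and $K^1$ is the increasing process of the macroscopic solution being perturbed: it has no reason to be small, nor even to be absolutely continuous with respect to $dC$, so Proposition \ref{proposition-smallRBSDE} cannot be invoked on this equation --- this is precisely the obstruction the paper's remark describes. The other possible reading of your construction fails as well: if you instead solve the RBSDE with data $(\tilde f,\tilde\nu,g,\xi^2,\tilde L)$ and call its increasing process $K^2$ (so $dK^2\ge 0$ and $1_{\{Y^2>\tilde L\}}dK^2=0$), the resulting triple does \emph{not} solve the perturbation equation, because the joint Skorohod condition $1_{\{Y^1+Y^2>L^1+L^2\}}(dK^1+dK^2)=0$ additionally requires $1_{\{Y^2>\tilde L\}}dK^1=0$; but $dK^1$ charges $\{Y^1=L^1\}$, which need not be contained in $\{Y^1+Y^2=L^1+L^2\}$. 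Consequently $\overline{S}^2=S^1+S^2$ would have an increasing push acting at times when $\overline{Y}^2$ is strictly above $L^1$, i.e.\ it is only a supersolution of the target RBSDE, not a solution.

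What survives from your write-up is the bookkeeping, which matches the paper's: the recentred driver $\tilde f$ satisfies the local-Lipschitz assumption with parameters $(\overline{\beta},\overline{\gamma},2\lambda,r)$, $\norm{\overline{\beta}}_{\infty,1}\le\rho\norm{r}_{\infty,2}^2$, $\overline{\gamma}\in L^2_{BMO}$, $\tilde\nu\in BMO$, and the threshold arithmetic gives the stated $\epsilon_0$. But the paper does not reduce to Proposition \ref{proposition-smallRBSDE}; it redoes the fixed-point argument directly on the perturbation equation. The solution map is defined through the underlying (frozen-drift) problem, where adding $S^1$ \emph{is} legitimate because Proposition \ref{proposition-underlyingproblem} requires no smallness whatsoever (any drift in $L^1_{BMO}$ works), and then subtracting $S^1$ again. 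In the contraction and size estimates, the terms involving the non-monotone $K^2$ are handled by two sign checks: $\Delta Y^2\,d\Delta K^2\le 0$ (valid for any $L^2$) and $Y^2\,dK^2\le 0$ (this is exactly where $L^2=0$ enters), after which the computations and the value of $\epsilon_0$ are identical to those of Propositions \ref{proposition-smallRBSDE.purelyquadratic} and \ref{proposition-smallRBSDE}. Those two sign checks are the genuinely new content of the proof, and they are absent from your proposal.
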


Note that while $S^2=(Y^2,N^2,K^2)$ is not the solution to a reflected BSDE, $(Y^2,N^2,\overline{K}^2)$ is. However the drift there would be $dV^2(Y^2,N^2)_s-dK^1_s$, whose residual action (when $(Y^2,N^2)=(0,0)$) is $\alpha^2_s dC_s -dK^1_s$, and this has no reason to be small. We can therefore not simply invoke proposition \ref{proposition-smallRBSDE} to construct $(Y^2,N^2,\overline{K}^2)$ and we need to argue further.
	
	\begin{proof}
		The majority of computations that would need to be done here, related to the dynamics of $Y^2$, are very similar to those in the proposition \ref{proposition-smallRBSDE} about small RBSDEs, so we only do the part which is different. 
		
		Define $\overline{f}(s,y,z) = f(s,y+Y^1_s,z+Z^1_s \sigma_s) - f(s,Y^1_s,Z^1_s \sigma_s) + \alpha^2_s$. Note that since $f$ satisfies $(\mathbf{A_{der}})$, $\overline{f}$ satisfies $(\mathbf{A_{locLip}})$ with parameters $(\overline{\beta},\overline{\gamma},2\lambda,r)$, where $\overline{\beta}=f_y(\cdot,Y^1,Z^1\sigma)$ and $\overline{\gamma}=f_z(\cdot,Y^1,Z^1\sigma)$. We have $\norm{\overline{\beta}}_{\infty,1} \le \rho \norm{r}_{\infty,2}^2 < +\infty$ and $\overline{\gamma} \in L^2_{BMO}$.
				
		Following the same approach as for RBSDEs, we first look at the underlying problem of finding $S^2=(Y^2,N^2,K^2)$ solving the perturbation equation (\ref{equation-perturbation.equation.of.reference.notationY2}) when the drift process is
			\begin{align*}
				dV^2_s = dV^2(y^2,n^2)_s= \big[ f(s,y^2_s + Y^1_s,z^2_s \sigma_s + Z^1_s \sigma_s) - f(s,Y^1_s,Z^1_s \sigma_s) + \alpha^2_s \big]dC_s \\
					\qquad + d\Big\langle \nu + \int 2g d(N^1)^\perp,(n^2)^\perp \Big\rangle_s + g_s d\Big\langle (n^2)^\perp \Big\rangle_s \ .
			\end{align*}
		If $S^2$ is a solution, $\overline{S}^2=S^1+S^2$ is then solution to the reflected BSDE (\ref{equation-RBSDE.with.non-circular.driver}) 
		with drift process given by $d\overline{V}^2_s = dV^1(Y^1,N^1)_S+dV^2(y^2,n^2)_s = \big[ f(s,y^2_s + Y^1_s,z^2_s \sigma_s + Z^1_s \sigma_s) + \alpha^2_s \big]dC_s + d\langle \nu, (N^1)^\perp+(n^2)^\perp \rangle_s + g_s d\langle (N^1)^\perp+(n^2)^\perp \rangle_s$. 
		But proposition \ref{proposition-underlyingproblem} guarantees the existence and uniqueness of such an $\overline{S}^2$, hence that of the sought $S^2$. This allows to define a map $Sol'$ from $\mathcal{S}^\infty \times BMO$ to itself. 
		
		Now, to find a solution $S^2$ to the perturbation equation (\ref{equation-perturbation.equation.of.reference.notationY2}), we proceed like in propositions \ref{proposition-smallRBSDE.purelyquadratic} and \ref{proposition-smallRBSDE}, the difference being in dealing with $dK^2$ which is not monotonous anymore here.
		Up to doing the usual transformations (proposition \ref{proposition-smallRBSDE}), let us assume that the drift is purely quadratic as in proposition \ref{proposition-smallRBSDE.purelyquadratic}. Then, Itô's formula first leads to the estimates 
			\begin{align*}
				\abs{\Delta Y^2_t}^2 + \Et{\int_t^T d\bracket{\Delta N^2}_s} \le 2 E \bigg( \int_t^T \Delta Y^2_s d \Delta V^2_s \Big| \F_t \bigg) + 2 E \bigg( \int_t^T \Delta Y^2_s d \Delta K^2_s \Big| \F_t \bigg) , \\
				\abs{Y^2_t}^2 + \Et{\int_t^T d\bracket{N^2}_s} \le \norm{\xi^2}_\infty^2 +  2 E \bigg( \int_t^T Y^2_s d V^2_s \Big| \F_t \bigg) + 2 E \bigg( \int_t^T Y^2_s d K^2_s \Big| \F_t \bigg) \ .
			\end{align*}

		For the term in $\Delta Y^2d\Delta K^2$ one has (even if $L^2 \neq 0$)
			\begin{align*}
				\Delta Y^2d\Delta K^2 &= \big( (Y^{2})' - Y^{2}\big) d(K^{2})' - \big( (Y^{2})' - Y^{2}\big)  dK^{2}				\\
				&= \big((\overline{Y}^{2})' - \overline{Y}^{2}\big) (d(\overline{K}^{2})' - dK^1) - \big((\overline{Y}^{2})' - \overline{Y}^{2}\big) (d\overline{K}^{2} - dK^1)		\\
				&= \big((\overline{Y}^{2})' - \overline{Y}^{2}\big) d(\overline{K}^{2})'  - \big((\overline{Y}^{2})' - \overline{Y}^{2}\big) d\overline{K}^{2} 		\\
				&= \underbrace{(\overline{L}^{2} - \overline{Y}^{2})}_{\le 0} \underbrace{d(\overline{K}^{2})'}_{\ge 0}  - \underbrace{\big( (\overline{Y}^{2})' - \overline{L}^{2} \big)}_{\ge 0}
					\underbrace{d\overline{K}^{2}}_{\ge 0} \le 0 .
			\end{align*}
		For the term in $Y^2dK^2$ one has however, since $L^2=0$, 
			\begin{align*}
				Y^2dK^2 &= (Y^2-L^2)dK^2 + L^2dK^2	\\
				&=  \big((\overline{Y}^2-Y^1)-(\overline{L}^2-L^1)\big)dK^2 + L^2dK^2	\\
				&=  \big((\overline{Y}^2-\overline{L}^2)-(Y^1-L^1)\big)(d\overline{K}^2-dK^1) + L^2dK^2	\\	
				&= \underbrace{(\overline{Y}^2-\overline{L}^2)d\overline{K}^2}_{=0} - \underbrace{(\overline{Y}^2-\overline{L}^2)dK^1}_{\ge 0} 
					- \underbrace{(Y^1-L^1)d\overline{K}^2}_{\ge 0} + \underbrace{(Y^1-L^1)dK^1}_{=0} + L^2dK^2	\\	
				&\le L^2 dK^2 = 0.
			\end{align*}

		Having observed this, the rest is like the analysis of the map $Sol$ and the $\epsilon_0$ is the same.		
		So in the end, provided that that 
			\begin{align*}
				\delta \boldsymbol{D} = \norm{\xi^2}_{\infty} + \norm{\alpha^2}_{\infty,1} \le \epsilon_0(\rho,2\lambda,r)=\frac{e^{-2\rho\abs{r^2}}}{2^{10}(2\lambda)\big( \norm{r}_{\infty,2}^2 + 2 \big)},
			\end{align*}
		there exists a solution $(Y^2,N^2,K^2)$ to the perturbation equation (\ref{equation-perturbation.equation.of.reference.notationY2}).
	\end{proof}

	\begin{remark} \label{remark-uniqueness.for.perturbations.}
		Note that uniqueness holds for the perturbation equations. First, under $(\mathbf{A_{der}})$, $(\mathbf{A_{lLip}})$ holds and so does uniqueness for reflected BSDEs. Then, one can argue that if $Y^2$ and $(Y^{2})'$ are two solutions to (\ref{equation-perturbation.equation.of.reference.notationY2}), then $\overline{Y}^{2}=Y^1+Y^2$ and $(\overline{Y}^{2})'=Y^1+(Y^{2})'$ are two solutions to the same reflected BSDE, so $\overline{Y}^{2}=(\overline{Y}^{2})'$, and therefore $Y^2=(Y^{2})'$. Alternatively we can also argue that if $(Y^2,N^2,K^2)$ is a solution to (\ref{equation-perturbation.equation.of.reference.notationY2}), then $(Y^2,N^2,\overline{K}^2)$ solves a reflected BSDE (\ref{equation-RBSDE.of.reference}) for which uniqueness holds.
	\end{remark}

%//////////////////////////////////////////////////////////////////////////////////////////////////////////////////////////////////////////////////////////////////////////////////////////
%//////////////////////////////////////////////////////////////////////////////////////////////////////////////////////////////////////////////////////////////////////////////////////////
\subsection{Existence theorem.}

We can now prove the existence theorem of this section.

	\begin{theorem}			\label{proposition-existencetheorem}
		Let $f$ satisfy $(\mathbf{A_{der}})$ with parameters $(\rho, \rho',\lambda,r,h)$ and be such that $f(\cdot,0,0) \in L^{\infty,1}$. Let $\nu \in BMO$, $g \in L^\infty$ be bounded by $\lambda$, $\xi \in L^\infty$, and $L$ be upper bounded. 
		There exists a solution $(Y,N,K) \in \S^\infty \times BMO \times \mathcal{A}$ to the RBSDE (\ref{equation-RBSDE.of.reference}) with data $(f,g,\nu,\xi,L)$. 
	\end{theorem}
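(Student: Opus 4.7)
The plan is to carry out the splitting--perturbation scheme described in the ``Principle'' subsection, combining the small-data result of Proposition~\ref{proposition-smallRBSDE} for the very first piece with iterated applications of the perturbation result of Proposition~\ref{proposition-smallperturbationtoaRBSDE} for all subsequent pieces. Before splitting, I first reduce to the case of a non-positive obstacle by translation: set $\ell:=\norm{L^+}_\infty$, $\widetilde L:=L-\ell$, $\widetilde\xi:=\xi-\ell$ and $\widetilde f(s,y,z):=f(s,y+\ell,z)$. Then $\widetilde L\le 0$, and the shifted drift $\widetilde f$ inherits $(\mathbf{A_{der}})$ with the very same parameters $(\rho,\rho',\lambda,r,h)$ since shifts in $y$ leave every derivative bound unchanged; the new residual drift $\widetilde f(\cdot,0,0)=f(\cdot,\ell,0)$ still lies in $L^{\infty,1}$ thanks to the bound $\abs{f_y}\le\rho r_t^2$ and $r\in L^{\infty,2}$. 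Any solution $(\widetilde Y,\widetilde N,\widetilde K)$ of the translated RBSDE then yields the solution $(\widetilde Y+\ell,\widetilde N,\widetilde K)$ of the original problem.

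\textbf{Splitting and iteration.} With $L\le 0$ (so $\norm{L^+}_\infty=0$), set $\alpha:=f(\cdot,0,0)$ and pick an integer $n$ large enough that
\begin{equation*}
\frac{\norm{\xi}_\infty+\norm{\alpha}_{\infty,1}}{n}\;\le\;\epsilon_0(\rho,2\lambda,r),
\end{equation*}
where $\epsilon_0$ is the threshold of Proposition~\ref{proposition-smallperturbationtoaRBSDE}. Define $\xi^i:=\xi/n$ and $\alpha^i:=\alpha/n$ for $i=1,\dots,n$. For the first step, apply Proposition~\ref{proposition-smallRBSDE} to the drift $f-\alpha+\alpha^1$, whose residual value at $(y,z)=(0,0)$ is exactly $\alpha^1$ and which inherits $(\mathbf{A_{locLip}})$ from $(\mathbf{A_{der}})$, producing a solution $S^1=(Y^1,N^1,K^1)$ to the RBSDE with data $(f-\alpha+\alpha^1,\nu,g,\xi^1,L)$. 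For $i=2,\dots,n$, given $\overline S^{i-1}$ solving the RBSDE with data $(f-\alpha+\overline\alpha^{i-1},\nu,g,\overline\xi^{i-1},L)$, apply Proposition~\ref{proposition-smallperturbationtoaRBSDE} with this base and with perturbation $(\alpha^i,\xi^i,L^i=0)$; by construction $\overline S^i:=\overline S^{i-1}+S^i$ solves the RBSDE with data $(f-\alpha+\overline\alpha^i,\nu,g,\overline\xi^i,L)$, with the Skorohod condition and increasing character of $\overline K^i=\overline K^{i-1}+K^i$ built into the constraints of the perturbation equation. At $i=n$, $\overline\alpha^n=\alpha$ and $\overline\xi^n=\xi$, so undoing the obstacle translation concludes the proof.

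\textbf{The key point.} The only thing worth checking carefully is that the threshold $\epsilon_0$ supplied by Proposition~\ref{proposition-smallperturbationtoaRBSDE} does not deteriorate as $i$ increases, so that a single choice of $n$ works uniformly throughout. This is precisely where the globally-Lipschitz-in-$y$ nature of $(\mathbf{A_{der}})$ matters: the successive shifts $-\alpha+\overline\alpha^{i-1}$ are progressively measurable processes depending only on $(\omega,s)$ and not on $(y,z)$, hence the base drifts $f-\alpha+\overline\alpha^{i-1}$ satisfy $(\mathbf{A_{der}})$ with the \emph{same} parameters $(\rho,\rho',\lambda,r,h)$ at every step, and the threshold $\epsilon_0(\rho,2\lambda,r)$ stays constant. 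In particular, it never depends on the (a priori unbounded) size of $\overline S^{i-1}$, which guarantees that the scheme terminates after the prescribed $n$ steps. This is exactly the feature which breaks down in the superlinear and monotone settings treated in Section~\ref{section-generalisation}, where an \emph{a priori} bound for $\overline Y^{i-1}$ in $\S^\infty$ will be needed to keep the iteration finite.
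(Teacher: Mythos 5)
Your proposal is correct and follows essentially the same route as the paper: translation by an upper bound of $L$ to reduce to a non-positive obstacle, uniform splitting of $(\xi,\alpha)$ into $n$ equal pieces, a first application of Proposition~\ref{proposition-smallRBSDE}, and then $n-1$ iterations of Proposition~\ref{proposition-smallperturbationtoaRBSDE}. Your explicit remark that the threshold $\epsilon_0$ is uniform over the iteration because the shifts $-\alpha+\overline\alpha^{i-1}$ do not alter the parameters in $(\mathbf{A_{der}})$ is exactly the point that the paper exploits (and revisits in Section~\ref{section-generalisation}).
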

	
	\begin{proof}
		The proof is done in two steps. First, we show that one can indeed reduce the problem to the case $L \le 0$, by translation. Existence for the RBSDE with $L \le 0$ is then proved by repeatedly perturbing a solution to a similar RBSDE with smaller data.
		
		\paragraph*{} \emph{Step 1.}
		If $(Y,N,K)$ is a solution to the RBSDE, and $U$ is an upper bound for $L$, set $\overrightarrow{Y}=Y-U$. We see that
			\begin{align*}
				\left\{\begin{aligned}
					&d\overrightarrow{Y} = dY - dU = - dV - dK + dN -0 = -d\overrightarrow{V} - dK + dN						\ ,\\
					&\overrightarrow{Y}_T = \xi - U =: \overrightarrow{\xi}																	\ ,\\
					&\overrightarrow{Y} = Y - U \ge L-U =:\overrightarrow{L}																	\ ,\\
					&1_{\{\overrightarrow{Y}>L-U\}}dK = 1_{\{Y>L\}}dK = 0	\ .
				\end{aligned}\right.
			\end{align*}
		Here we defined
			\begin{align*}
				d\overrightarrow{V} &= dV(Y,N)						\\
				&= dV(\overrightarrow{Y}+U,N)						\\
				&= f(s,\overrightarrow{Y}+U,Z\sigma)  dC + d\langle \nu,N^\perp \rangle + gd\langle N^\perp \rangle	\\
				&= \overrightarrow{f}(s,\overrightarrow{Y},Z\sigma) dC + d\langle \nu,N^\perp \rangle + gd\langle N^\perp \rangle \ .
			\end{align*}		
		It is clear that $\overrightarrow{f}$ still satisfies 	$(\mathbf{A_{der}})$ 	 with parameters $(\rho, \rho',\lambda,r,h)$. And from the assumption on $f_y$ one has
			\begin{align*}
				\abs{\overrightarrow{f}(s,0,0)} = \abs{f(s,U,0)} \le \abs{f(s,0,0)} + \rho r_s^2 U \ ,
			\end{align*}
		so $\overrightarrow{\alpha} = \overrightarrow{f}(\cdot,0,0) \in L^{\infty,1}$.
		
		In the end, $(\overrightarrow{Y},N,K) \in \S^\infty \times BMO \times \mathcal{A}$ is a solution to the reflected BSDE of parameters $(\overrightarrow{f},\nu,g,\overrightarrow{\xi},\overrightarrow{L})$ satisfying the same assumptions, but with $\overrightarrow{L}\le 0$.

		\paragraph*{} \emph{Step 2.}
		We now focus on the case $L \le 0$. Consider $\epsilon_0$ given by proposition \ref{proposition-smallperturbationtoaRBSDE}. 
		For $n \in \N^*$, we define $\xi^{(n)} = \frac{1}{n}\xi$ and $\alpha^{(n)}=\frac{1}{n}\alpha$ (where $\alpha=f(\cdot,0,0)$). 
		We split the data uniformly, that is we consider $\xi^i = \xi^{(n)}$ and $\alpha^i=\alpha^{(n)}$ for all $i \in \{ 1, \ldots, n \}$.
		We choose $n$ big enough so that one has $\boldsymbol{D}^{(n)} := \norm{\xi^{(n)}}_\infty+\norm{\alpha^{(n)}}_{\infty,1} = \frac{1}{n} \boldsymbol{D} \le \epsilon_0$.
		
		First, by proposition \ref{proposition-smallRBSDE}, there exists a solution $(Y^1,N^1,K^1)$ to the RBSDE (\ref{equation-RBSDE.of.reference}) with data $(f-\alpha+\alpha^1,\nu,g,\xi^1,L)$.
		
		Next, for $i = 2$ to $n$, having obtained a solution $(\overline{Y}^{i-1},\overline{N}^{i-1},\overline{K}^{i-1})$ to the RBSDE (\ref{equation-RBSDE.of.reference}) with data $(f-\alpha+\overline{\alpha}^{i-1},\nu,g,\overline{\xi}^{i-1},L)$, proposition \ref{proposition-smallperturbationtoaRBSDE} provides a solution $(Y^i,N^i,K^i)$ to the perturbation equation (\ref{equation-perturbation.equation.of.reference.procedure}) and therefore a solution $(\overline{Y}^{i},\overline{N}^{i},\overline{K}^{i})$ to the RBSDE (\ref{equation-RBSDE.of.reference}) with parameters $(f-\alpha+\overline{\alpha}^i,\nu,g,\overline{\xi}^i,L)$. For $i=n$, since $\overline{\xi}^{n}=\xi$ and $\overline{\alpha}^{n}=\alpha$, $(\overline{Y}^n,\overline{Z}^n,\overline{K}^n)$ is a solution to the RBSDE of interest, which ends the proof.				
	\end{proof}

%//////////////////////////////////////////////////////////////////////////////////////////////////////////////////////////////////////////////////////////////////////////////////////////
%//////////////////////////////////////////////////////////////////////////////////////////////////////////////////////////////////////////////////////////////////////////////////////////
  
\subsection{Stability in $\S^\infty \times BMO$.}

Given that uniqueness holds, the a posteriori bounds that come with the construction of a perturbation $\delta S = S'-S$ to a solution $S$ in proposition \ref{proposition-smallperturbationtoaRBSDE} readily shows the continuity of the map $(\xi,\alpha) \mapsto (Y,N)$, from $L^\infty \times L^{\infty,1}$ to $\S^\infty \times BMO$.

We now derive an estimate which shows that it is locally Lipchitz, by a sort of bootstrap argument on the above stability result, as well as a BMO-norm equivalence. 
In the proposition below, we consider a fixed set of data $(f,\nu,g,\xi,L)$ and the associated solution $S=(Y,N,K)$, and we define $\alpha = f(\cdot,0,0)$. 
Now, for close data $(f+\delta\alpha',\nu,g,\xi',L)$ and $(f+\delta\alpha'',\nu,g,\xi'',L)$, we consider the solutions $S'$ and $S''$. Set $\delta \xi' = \xi'-\xi$ and $\delta \xi'' = \xi''-\xi$.
We use the notation $\delta S' = S'-S$, $\delta S'' = S''-S$ for the perturbations around $S$ and $\Delta S = S''-S' = \delta S'' - \delta S'$. 
What we show is that if $(\delta \xi',\delta\alpha')$ and $(\delta \xi'',\delta\alpha'')$ are sufficientily small, 
the distance $\norm{\Delta S}$ is linearly controlled by the distance 
$\Delta \boldsymbol{D} = \norm{\Delta \xi}_\infty + \norm{\Delta\alpha}_{\infty,1} = \norm{\xi''-\xi'}_\infty + \norm{\delta\alpha''-\delta\alpha'}_{\infty,1}$. 
That is, $(\xi',\alpha') \mapsto (Y',N')$ is locally Lipschitz at the point $(\xi,\alpha)$.

	\begin{proposition}			\label{proposition-stability.estimate} 
		Suppose that $f$ satisfies $(\mathbf{A_{der}})$ with parameters $(\rho, \rho',\lambda,r,h)$, that $\alpha=f(\cdot,0,0) \in L^{\infty,1}$, that $\nu \in BMO$, that $g$ is bounded by $\lambda$ and that $L$ is upper bounded. We consider $\xi \in L^\infty$ and the solution $(Y,N,K)$ to the reflected BSDE of parameters $(f,\nu,g,\xi,L)$. 
		
		Now, for any $(\xi',\delta \alpha')$ and $(\xi'',\delta \alpha'') \in L^\infty \times L^{\infty,1}$, let $S'=(Y',N',K')$ and $S''=(Y'',N'',K'')$ be the solutions to the reflected BSDEs of parameters $(f+\delta \alpha',\nu,g,\xi',L)$ and $(f+\delta \alpha'',\nu,g,\xi'',L)$ respectively. 
		
		If $\delta \boldsymbol{D}' = \norm{\delta \xi^{'}}_\infty + \norm{\delta \alpha^{'}}_{\infty,1}$ and $\delta \boldsymbol{D}'' = \norm{\delta \xi^{''}}_\infty + \norm{\delta \alpha^{''}}	_{\infty,1}$
		satisfy
			\begin{align*}
				\delta \boldsymbol{D}'  \text{ and } \delta \boldsymbol{D}'' 
							\le \frac{1}{\sqrt{2}} \epsilon_0(\overline{\beta},2\lambda,r) = \frac{1}{\sqrt{2}} \frac{e^{-2\norm{\overline{\beta}}_{\infty,1} }}{2^{10} (2\lambda) \big( \norm{r}_{\infty,2}^2 + 2 \big)} \ ,
			\end{align*} 
		where $\overline{\beta}=f_y(\cdot,Y,Z\sigma)$, then we have
			\begin{align*}
				&\norm{Y''-Y'}_{\S^\infty} \le 2^5 e^{2\norm{\overline{\beta}}_{\infty,1}}\big( \norm{\xi''-\xi'}_\infty + \norm{\alpha''-\alpha'}_{\infty,1} \big)		\qquad\text{and} \\
				&\norm{N''-N'}_{BMO(P)} \le 2^5 C(Y,N)\, e^{2\norm{\overline{\beta}}_{\infty,1}}\big( \norm{\xi'' - \xi'}_{\infty} + \norm{\alpha''-\alpha'}_{\infty,1} \big) \ ,
			\end{align*}
		where $C(Y,N)$ is a constant depending on $(Y,N)$. 
	\end{proposition}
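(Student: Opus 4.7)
The plan is to study $\Delta S = S'' - S'$ directly as an \emph{almost-linear} perturbation by Taylor-expanding the nonlinearity not at $S'$ but at the reference solution $S$, then bootstrapping Proposition \ref{proposition-smallperturbationtoaRBSDE} to absorb the nonlinear remainder. Subtracting the two RBSDEs gives
\begin{align*}
d\Delta Y = -d\Delta V - d\Delta K + d\Delta N, \qquad \Delta Y_T = \Delta\xi := \xi''-\xi',
\end{align*}
where $d\Delta V$ contains the difference $\bigl[f(s,Y'',Z''\sigma) - f(s,Y',Z'\sigma)\bigr]dC + (\Delta\alpha)dC$ plus the orthogonal terms $d\langle \nu, \Delta N^\perp\rangle + g d\langle (N'')^\perp + (N')^\perp,\Delta N^\perp\rangle$. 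Using $(\mathbf{A_{der}})$, write
\begin{align*}
f(s,Y'',Z''\sigma) - f(s,Y',Z'\sigma) = \overline\beta_s \Delta Y + \overline\gamma_s\,\Delta Z \sigma + R_s,
\end{align*}
where $\overline\beta = f_y(\cdot,Y,Z\sigma)$, $\overline\gamma = f_z(\cdot,Y,Z\sigma)$, and $R$ is a second-order Taylor remainder whose second derivatives are controlled by $\lambda$ and $r$, so that $|R_s|\,dC_s$ is pointwise dominated by a quadratic form in $(\delta S', \delta S'')$ against $\Delta S$.

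The next step is the same pair of transforms used in Proposition \ref{proposition-smallRBSDE}: set $B_t = \exp(\int_0^t \overline\beta_u dC_u)$, which is bounded by $e^{\rho\|r\|_{\infty,2}^2}$, and define $Q$ by $\frac{dQ}{dP} = \mathcal{E}(\int \overline\gamma \sigma^{-1} dM + \nu')$ with $\nu' = \nu + \int g\,d((N')^\perp+(N'')^\perp)$. Under $Q$, the linear drift disappears and $\widehat{\Delta S} := (B\Delta Y,\, \int B^{-1}d\widetilde{\Delta N})$ satisfies a perturbation equation structurally identical to the one treated in Proposition \ref{proposition-smallperturbationtoaRBSDE}, except that the driver now contains: (i) a purely-quadratic piece in $\widehat{\Delta S}$ of magnitude $O(\widehat\lambda(\|\delta S'\|+\|\delta S''\|)\|\widehat{\Delta S}\|)$ coming from $R$; (ii) a residual drift $B\Delta\alpha\,dC$ with $L^{\infty,1}$-norm at most $e^{\rho\|r\|_{\infty,2}^2}\|\Delta\alpha\|_{\infty,1}$; (iii) a terminal $B_T\Delta\xi$ of sup-norm at most $e^{\rho\|r\|_{\infty,2}^2}\|\Delta\xi\|_\infty$. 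Applying It\^o to $(B\Delta Y)^2$ under $Q$ and taking conditional expectation as in Propositions \ref{proposition-smallRBSDE.purelyquadratic} and \ref{proposition-smallperturbationtoaRBSDE}, the reflection contribution is non-positive via
\begin{align*}
\Delta Y\,d\Delta K = (L-Y')dK'' - (Y''-L)dK' \le 0,
\end{align*}
and one arrives at a bound of the form
\begin{align*}
\|\widehat{\Delta S}\|_Q^2 \le c_1\, e^{2\|\overline\beta\|_{\infty,1}}(\Delta\boldsymbol{D})^2 + c_2\,(\|\delta S'\|+\|\delta S''\|)^2 \|\widehat{\Delta S}\|_Q^2.
\end{align*}

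The bootstrap is now visible: by Proposition \ref{proposition-smallperturbationtoaRBSDE} applied to each of $\delta S'$ and $\delta S''$, the smallness condition $\delta\boldsymbol{D}',\delta\boldsymbol{D}''\le \frac{1}{\sqrt2}\epsilon_0(\overline\beta,2\lambda,r)$ forces $\|\delta S'\|+\|\delta S''\|$ to be small enough that $c_2(\|\delta S'\|+\|\delta S''\|)^2 \le \tfrac12$, allowing absorption into the left-hand side and yielding $\|\widehat{\Delta S}\|_Q \le 2^{5}(\Delta\boldsymbol{D})$ after tracking the constants. Undoing the exponential transform gives $\|\Delta Y\|_{\mathcal{S}^\infty}\le e^{\|\overline\beta\|_{\infty,1}}\|\widehat{\Delta Y}\|_{\mathcal{S}^\infty}$, which after absorbing $e^{\|\overline\beta\|_{\infty,1}}\le e^{2\|\overline\beta\|_{\infty,1}}$ yields the stated bound on $\|Y''-Y'\|_{\mathcal{S}^\infty}$. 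The $BMO(P)$ bound on $\Delta N$ requires one further step: convert $\|\widetilde{\Delta N}\|_{BMO(Q)}$ back into $\|\Delta N\|_{BMO(P)}$ using the $BMO$-norm equivalence under the change of measure generated by $\int\overline\gamma\sigma^{-1}dM + \nu'$ (Kazamaki, theorems 2.3 and 3.3), which produces a constant $C(Y,N)$ depending on $\|\overline\gamma\|_{L^2_{BMO}}$, $\|\nu\|_{BMO}$, $\|g\|_\infty$ and $\|N\|_{BMO}$, all controlled by $(Y,N)$ via $(\mathbf{A_{der}})$ and Proposition \ref{proposition-aprioriestimate}. The main obstacle I anticipate is the bookkeeping in Step~3: one must carefully split the remainder $R$ so that each factor appearing is a genuine second-order term in $\Delta S$ multiplied by a small first-order term in $\delta S', \delta S''$ (rather than the other way around), because only then does the bootstrap close and give a linear, as opposed to sublinear, dependence of $\|\Delta S\|$ on $\Delta\boldsymbol{D}$.
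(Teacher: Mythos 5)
Your proposal is correct and follows essentially the same route as the paper: linearize $f$ around the reference solution $S$, apply the exponential transform with $\overline{B}=e^{\int\overline{\beta}\,dC}$ and the Girsanov change of measure to kill the first-order terms, handle the reflection term by the sign argument $\Delta Y\,d\Delta K\le 0$, and then bootstrap the a posteriori bounds on $\norm{\widehat{\delta S'}}_Q$ and $\norm{\widehat{\delta S''}}_Q$ from Proposition \ref{proposition-smallperturbationtoaRBSDE} to absorb the quadratic term $c_2(\norm{\delta S'}+\norm{\delta S''})^2\norm{\widehat{\Delta S}}_Q^2$ into the left-hand side, finishing with Kazamaki's $BMO$-norm equivalence for the $P$-estimate on $\Delta N$. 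The only cosmetic deviation is your choice of $\nu'=\nu+\int g\,d((N')^\perp+(N'')^\perp)$ where the paper uses $\overline{\nu}=\nu+\int 2g\,dN$ (keeping the residual $g\,d\langle(\delta N'')^\perp+(\delta N')^\perp,(\Delta N)^\perp\rangle$ in the quadratic part), which is what makes the constant $C$ depend only on $(Y,N)$; both variants close the argument.
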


	\begin{proof}
		We know that $\overline{f}(s,\delta y, \delta z) := f(s,Y_s+\delta y,Z_s \sigma_s + \delta z) - f(s,Y_s,Z_s \sigma_s)$ satisfies $(\mathbf{A_{locLip}})$ 
		with parameters $(\overline{\beta},\overline{\gamma},2\lambda,r)$, where $\overline{\beta}=f_y(\cdot,Y,Z\sigma)$ and $\overline{\gamma}=f_z(\cdot,Y,Z\sigma)$ satisfy 
		$\norm{\overline{\beta}}_{\infty,1} \le \rho\norm{r}_{\infty,2}^2$ and $\overline{\gamma} \in L^2_{BMO}$ ; and $\overline{\nu}=\nu + \int 2gdN \in BMO$. We linearize $\overline{f}$ like in proposition \ref{proposition-smallRBSDE} : $\overline{f}(s,\delta y,\delta z) = \overline{\beta}_s \delta y + \overline{\gamma}_s \delta z+ \overline{h}(s,\delta y,\delta z)$.

		Since the difference $\Delta Y = Y'' - Y'$ has the dynamics
			\begin{align*}
				d\Delta Y_s &= - \big[ \Delta \alpha_s + \overline{\beta}_s \Delta Y_s + \overline{\gamma}_s \Delta Z_s\sigma_s 
														+ \{ \overline{h}(s,\delta Y''_s,\delta Z''_s\sigma_s) - \overline{h}(s,\delta Y',\delta Z'_s \sigma_s) \} \big] dC_s			\\
					&\qquad - d\bracket{\overline{\nu},(\Delta N)^\perp}_s - g_s d\bracket{ (\delta N'')^{\perp}+(\delta N')^{\perp} , (\Delta N)^\perp }_s - d\Delta K_s + d\Delta N_s	 \ ,			
			\end{align*}
		doing the usual transformations, with $\frac{dQ}{dP} = \Exp(\int \overline{\gamma}\sigma^{-1}dM + \overline{\nu} )$ and $\overline{B} = e^{\int_0^\cdot \overline{\beta}_u dC_u}$, 
		the standard computations give, like in (\ref{equation-small.fixedpoint.DeltaSol.final.estimate}),
			\begin{align*}
				\norm{\widehat{\Delta S}}^2_Q 
					\le 2^9 \widehat{\Delta E} + 2^9 \widehat{2\lambda}^2(\norm{r}_{\infty,2}^2+2)^2 \big( \norm{\widehat{\delta S''}}^2_Q+\norm{\widehat{\delta S'}}^2_Q \big)  \norm{\widehat{\Delta S}}^2_Q
			\end{align*}
			
		But $\delta S''$ and $\delta S'$ are the unique solutions to the perturbation equations (\ref{equation-perturbation.equation.of.reference.notationY2}) 
		with data $(\overline{f}+\delta \alpha'',\overline{\nu},g,\delta \xi'',L)$ and $(\overline{f}+\delta \alpha',\overline{\nu},g,\delta \xi',L)$, and by the way they were constructed in proposition \ref{proposition-smallperturbationtoaRBSDE} 
		(recall that $\delta \boldsymbol{D}', \delta \boldsymbol{D}''  \le \exp(-\norm{\overline{\beta}}_{\infty,1}) \frac{1}{2^5} \frac{R_0(\widehat{2\lambda},r)}{\sqrt{2}}$) we know that they satisfy
			\begin{align*}
				\norm{\widehat{\delta S''}}^2_Q,\ \norm{\widehat{\delta S'}}^2_Q \le \frac{ R_0(\widehat{2\lambda},r)^2 }{2} \ ,			\qquad \text{ so}\\
				\norm{\widehat{\delta S''}}^2_Q + \norm{\widehat{\delta S'}}^2_Q \le R_0(\widehat{2\lambda},r)^2 = \frac{1}{ 2^{10} \widehat{2\lambda}^2 (\norm{r}^2+2)^2 } \ .
			\end{align*}
		Reinjecting this in the previous estimate we have 
			%\begin{align*}
				$\norm{\widehat{\Delta S}}^2_Q \le 2^9 \widehat{\Delta E} + \half \norm{\widehat{\Delta S}}^2_Q$
			%\end{align*}
		and therefore
			\begin{align*}
				\norm{\widehat{\Delta S}}^2_Q = \norm{\widehat{\Delta Y}}^2_{\S^\infty} + \norm{\widehat{\widetilde{\Delta N}}}^2_{BMO(Q)} \le 2^{10}\widehat{\Delta E} \ .
			\end{align*}
		Then this implies that $\norm{\widehat{\Delta Y}}_{\S^\infty} \le 2^{5} \widehat{\Delta \boldsymbol{D}}$ 
		and so $\norm{\Delta Y}_{\S^\infty} \le 2^{5} e^{2\norm{\overline{\beta}}_{\infty,1} } \Delta \boldsymbol{D}$. 
		For the same reason, $\norm{\widetilde{\Delta N}}_{BMO(Q)} \le 2^{5} e^{2\norm{\overline{\beta}}_{\infty,1}} \Delta \boldsymbol{D}$. 
		By theorem 3.6 in Kazamaki, $\norm{\Delta N}_{BMO(P)} \le C(Q) \norm{\widetilde{\Delta N}}_{BMO(Q)}$ where the constant depends only on $Q$, or equivalently on the martingale $\int \overline{\gamma}\sigma^{-1}dM + \overline{\nu}$, and \textit{in fine} on $(Y,N)$.
	\end{proof}

Note that the interesting part of the above result is the martingale estimate. Indeed, the estimate for $Y''-Y'$ in $\S^\infty$ actually holds for any size of data (as can be seen by linearizing the drift, doing a change of measure to get rid of all the terms in $N$ and solving for $Y$). 
As mentionned in the introduction, we know that $(\xi,\alpha) \mapsto N$ is global Lipschitz in $\H^p$, and $\half$-Hölder in $BMO$. The above estimate shows it is in fact locally Lipschitz in $BMO$.

%%%%%%%%%%%%%%%%%%%%%%%%%%%%%%%%%%%%%%%%%%%%%%%%%%%%%%%%%%%%%%%%%%%%%%%%%%%%%%%%%%%%%%%%%%%%%%%%%%%%%%%%%%%%%%%%%%%%%%%%%%%%%%%%%%%%%%%%%%%%%%%%%%%
%%%%%%%%%%%%%%%%%%%%%%%%%%%%%%%%%%%%%%%%%%%%%%%%%%%%%%%%%%%%%%%%%%%%%%%%%%%%%%%%%%%%%%%%%%%%%%%%%%%%%%%%%%%%%%%%%%%%%%%%%%%%%%%%%%%%%%%%%%%%%%%%%%%
%%%%%%%%%%%%%%%%%%%%%%%%%%%%%%%%%%%%%%%%%%%%%%%%%%%%%%%%%%%%%%%%%%%%%%%%%%%%%%%%%%%%%%%%%%%%%%%%%%%%%%%%%%%%%%%%%%%%%%%%%%%%%%%%%%%%%%%%%%%%%%%%%%%
  
\section{Existence under more general assumptions.}					\label{section-generalisation}

In theorem \ref{proposition-existencetheorem}, the existence of a solution was proved under $(\mathbf{A_{der}})$, so in particular under the assumption that $f$ is a Lipschitz function of $y$, and therefore at most linear in $y$. 
In this section, we extend this result to more general assumptions on $f$.

To some extent, we would like to replace $\rho,\rho',\lambda$ which are constants in $(\mathbf{A_{der}})$ by arbitrary growth functions (while of course still assuming that $f$ ends up with a growth in $y$ compatible with existence of solutions). 
Looking back at proposition \ref{proposition-smallperturbationtoaRBSDE}, we see that when $\rho$ is a growth function, the maximal size $\epsilon$ allowed for a perturbation $(\xi^2,\alpha^2)$ of the parameters would depend on the size $\norm{Y^1}_{\S^\infty}$ of the solution. 
It is therefore not clear that one can choose $\epsilon_0$ and the decomposition $\xi=\sum_{i=1}^n \xi^i$, $\alpha = \sum_{i=1}^n \alpha^i$ uniformly for the perturbation procedure in the proof of theorem \ref{proposition-existencetheorem}, or to put things differently, that a series of pertubations could terminate in finitely many steps. 
This however can be guaranteed if one can obtain an \emph{a priori} bound for the solutions to reflected BSDEs with drift $(f,\nu,g)$.

\paragraph{Case of a superlinear growth in $y$.}\ \\
In the following theorem, we extend theorem \ref{proposition-existencetheorem} to the case where $f$ can have slightly-superlinear growth in $y$.

	\begin{theorem}
		Consider a set of data $(f,\nu,g,\xi,L)$ satisfying the assumptions of theorem \ref{proposition-existencetheorem}, but with $\rho, \rho',\lambda$ in $(\mathbf{A_{der}})$ being growth function instead of constants.
		Further assume that $\abs{f(t,y,0)} \le \abs{f(t,0,0)} + \varphi(y)$ for a growth function $\varphi$ such that $\int_1^{+\infty} \frac{1}{\varphi(y)}dy = +\infty$.
		Then there exists a solution $(Y,N,K)$ to the reflected BSDE (\ref{equation-RBSDE.of.reference}) with data $(f,\nu,g,\xi,L)$. 
	\end{theorem}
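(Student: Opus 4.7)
The plan is to combine the perturbation procedure of Theorem \ref{proposition-existencetheorem} with an \emph{a priori} $\S^\infty$ bound for bounded solutions, as anticipated in the introductory paragraph of this section. As in the proof of Theorem \ref{proposition-existencetheorem}, I would begin by translating $Y \to Y-U$ with $U$ an upper bound of $L$ to reduce to the case $L \le 0$; the hypothesis $\abs{f(t,y,0)} \le \abs{f(t,0,0)} + \varphi(y)$ transfers to the translated coefficient $\overrightarrow{f}$ with a shifted growth function $\overrightarrow{\varphi}(y) = \varphi(y+U)$, which still satisfies $\int_1^{+\infty} \frac{dy}{\overrightarrow{\varphi}(y)} = +\infty$.

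The key step is the \emph{a priori} bound: there should exist $M < +\infty$, depending only on $\norm{\xi}_\infty$, $\norm{L^+}_\infty$, $\norm{f(\cdot,0,0)}_{\infty,1}$, $\norm{\nu}_{BMO}$, $\norm{g}_\infty$, $\norm{h}_{L^2_{BMO}}$ and $\varphi$, such that any bounded solution $(Y,N,K)$ to an RBSDE of data $(f-\alpha+\beta,\nu,g,\eta,L)$ with $\norm{\eta}_\infty \le \norm{\xi}_\infty$ and $\norm{\beta}_{\infty,1} \le \norm{\alpha}_{\infty,1}$ (the shapes arising in the perturbation procedure) satisfies $\norm{Y}_{\S^\infty} \le M$. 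To obtain such a bound I would apply It\^o's formula to $e^{\mu Y_s}$, paralleling Proposition \ref{proposition-aprioriestimate}: for $\mu$ large enough the It\^o correction absorbs the quadratic contributions in $Z\sigma$ and in $N^\perp$, the cross term $d\bracket{\nu, N^\perp}$ is controlled by Kunita-Watanabe, and the term involving $dK$ is handled via the Skorokhod condition $1_{\{Y>L\}}dK=0$ together with $L \le 0$. What remains, after taking conditional expectations and supremum, is a scalar inequality of the form
\begin{align*}
e^{\mu \norm{Y}_{\S^\infty}} \le A + B \, \varphi\bigl( \norm{Y}_{\S^\infty} \bigr),
\end{align*}
with $A,B$ depending only on the data. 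The hypothesis $\int_1^{+\infty} \frac{dy}{\varphi(y)} = +\infty$ then yields a global bound via a Bihari-LaSalle type comparison.

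With $M$ in hand, the perturbation procedure of Theorem \ref{proposition-existencetheorem} goes through almost verbatim. Inspection of Proposition \ref{proposition-smallperturbationtoaRBSDE} shows that when $\rho,\rho',\lambda$ are growth functions rather than constants, its conclusion still holds with $\epsilon_0$ replaced by $\epsilon_0\bigl( \rho(\norm{Y^1}_{\S^\infty}), 2\lambda(\norm{Y^1}_{\S^\infty}), r \bigr)$. Since each intermediate solution $\overline{S}^{i-1}$ constructed along the procedure solves an RBSDE in the class controlled by $M$, one has $\norm{\overline{Y}^{i-1}}_{\S^\infty} \le M$, hence the maximal allowed perturbation size is bounded below uniformly in $i$ by the positive constant
\begin{align*}
\epsilon_0^{\star} := \epsilon_0\bigl( \rho(M), 2\lambda(M), r \bigr).
\end{align*}
Choosing $n$ large enough that $\bigl( \norm{\xi}_\infty + \norm{\alpha}_{\infty,1} \bigr)/n \le \epsilon_0^{\star}$, the procedure terminates in $n$ steps and produces the desired solution.

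The main obstacle is the \emph{a priori} estimate itself: one has to check carefully that the reflecting process $K$ and the orthogonal martingale $N^\perp$ do not spoil the Bihari-type reduction, and that the constants $A,B$ above can indeed be taken independent of the specific data $(\eta,\beta)$ arising along the perturbation. The slightly-superlinear assumption $\int_1^{+\infty} \frac{dy}{\varphi(y)} = +\infty$ is precisely the threshold at which the resulting scalar comparison still admits a global bound, and it is what makes the whole argument close.
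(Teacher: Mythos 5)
Your overall architecture is exactly the paper's: reduce to $L\le 0$ by translation, obtain a uniform \emph{a priori} bound $M$ on $\norm{Y}_{\S^\infty}$ for all the intermediate reflected BSDEs, and use it to fix a perturbation step size $\epsilon_0^\star = \epsilon_0(\rho(M),\cdot,r)$ that is uniform along the procedure, so that $n$ steps with $\boldsymbol{D}/n\le\epsilon_0^\star$ terminate. The second half of your argument is essentially the paper's proof. The difference is in how the a priori bound is obtained: the paper does not prove it but imports it directly from Theorem 1 of Kobylanski, Lepeltier, Quenez and Torr\`es, whereas you sketch a derivation, and that sketch has a genuine gap.

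The problem is the claimed endpoint ``$e^{\mu \norm{Y}_{\S^\infty}} \le A + B\,\varphi(\norm{Y}_{\S^\infty})$, then Bihari--LaSalle gives a global bound.'' First, Bihari--LaSalle is a comparison lemma for integral inequalities $u(t)\le a+\int_0^t \psi(u(s))\,ds$; it does not apply to a single static scalar inequality. Second, and more seriously, the hypothesis $\int_1^{+\infty}\varphi(y)^{-1}dy=+\infty$ does \emph{not} force $e^{\mu m}>A+B\varphi(m)$ for all large $m$: an increasing growth function can jump to the value $e^{2\mu y_0}$ at $y_0$ and then stay constant over an interval of length of order $e^{\mu y_0}$, keeping the integral divergent while satisfying $\varphi(y_0)\ge e^{\mu y_0}$ at arbitrarily large points $y_0$. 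So the static inequality does not rule out large values of $\norm{Y}_{\S^\infty}$. The divergence hypothesis is actually exploited differently in the cited estimate: one builds a change of variable $u=\Phi(y)$ out of $\int \varphi^{-1}$ (the Lepeltier--San Martin device) which converts the superlinear $y$-growth into a linear one, and the bound follows from the linear case. Note also that the exponential transform of Proposition \ref{proposition-aprioriestimate} is of no help here: there the constant $\Lambda=\lambda(\norm{Y}_{\S^\infty})$ already presupposes a bound on $Y$, i.e.\ that proposition bounds $N$ in $BMO$ \emph{given} $Y\in\S^\infty$, it does not produce the $\S^\infty$ bound. Either cite the Kobylanski \emph{et al.} estimate as the paper does, or carry out the $\Phi$-transform argument in full; as written, the key step does not close. (A minor further point: since the recentered coefficient in step $i$ is evaluated at $\overline{Y}^{i-1}$ plus a perturbation of size up to $R$, your uniform constant should be $\lambda(M+R)$ rather than $\lambda(M)$; this is cosmetic.)
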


	\begin{proof}
		We will apply the perturbation procedure as was done previously when $\rho,\rho',\lambda$ were constants.
		
		First, by the estimate in theorem 1 in Kobylanski \emph{et al.} \cite{KobylanskiAndLepeltierAndQuenezAndTorres}, we know that there exists a function $F$ increasing (a growth function) such that for any set of data $(f,\nu,g,\xi,L)$ satisfying the assumptions and for any solution $(Y,N,K)$ we have $\norm{Y}_{\S^\infty} \le F(\norm{\xi}_\infty,\norm{\alpha}_{\infty,1})$. 
		Now, for a fixed set of data, we define $\rho_{\text{max}} = \rho\big(F(\norm{\xi}_\infty,\norm{\alpha}_{\infty,1})\big)$. 
		We fix $n$ big enough that
			\begin{align*}
				\boldsymbol{D}^{(n)}=\frac{\boldsymbol{D}}{n} 
										\le \frac{e^{-2 \rho_{\text{max}}\norm{r}_{\infty,2}^2}}{2^{10} \big(2\lambda(1)\big) \big( \norm{r}_{\infty,2}^2+2 \big)} = \epsilon_0(\rho_{\text{max}}, 2\lambda(1),r) \ .
			\end{align*}
		We will construct $n$ solutions $S^i$ of reflected BSDEs or perturbation equations such that for each equation, the size of the data is $\boldsymbol{D^i}=\boldsymbol{D^{(n)}}$ and the size of the solution is such that $\norm{\widehat{Y^i}}_{S^\infty} \le 1$. Note that the $\widehat{\ \ }$ here indicates the multiplication by $B^i=\exp\big(\int_0^\cdot f_y(\overline{S}^{i-1}_u) dC_u \big)$.
		
		We know that we can do a transation to be reduced to the case $L \le 0$ so we assume from now on that $L \le 0$. 
		Define, for $i=1 \ldots n$, $\xi^i := \xi^{(n)} = \frac{1}{n} \xi$ and $\alpha^i := \alpha^{(n)} = \frac{1}{n} \alpha$ (uniform decomposition of $\xi$ and $\alpha$). 
		
		\paragraph*{}
		For $i=1$, we first build a solution $S^1=(Y^1,N^1,K^1)$ to the reflected BSDE (\ref{equation-RBSDE.of.reference}) with parameters $(f-\alpha+\alpha^1,\nu,g,\xi^1,L)$. 
		Proposition \ref{proposition-smallRBSDE} as it is stated doesn't strictly apply, but we can adapt the proof. 
		We define the integrating factor $B=e^{\int \beta dC}$ with $\beta =\overline{\beta}^{0} = f_y(\cdot,0,0) \in L^{\infty,1}$ 
		and the new measure $Q$ by $\frac{dQ}{dP} = \Exp(\int \gamma \sigma^{-1}dM + \nu)$ where $\gamma = \overline{\gamma}^{0} = f_z(\cdot,0,0) \in L^2_{BMO}$. 
		Then, like in proposition \ref{proposition-smallRBSDE.purelyquadratic}, we look for a solution $(\widehat{Y^1},\widehat{\widetilde{N^1}},\widehat{K^1})$ 
		to the reflected BSDE with no linear term, via the fixed point theorem. 
		We look for a solution in a ball of radius $R$ and now further demand that $R\le 1$, so that the conditions to be met are that
			\begin{align*}
				R \le R_0(\widehat{2\lambda(1)},r) = \frac{1}{2^5 \widehat{2\lambda(1)} \big(  \norm{r}_{\infty,2}^2 + 2 \big)}		\text{ and }
				\widehat{\boldsymbol{D}^1} = \frac{\widehat{\boldsymbol{D}}}{n} \le \frac{R_0(\widehat{2\lambda(1)},r)}{2^5} = \epsilon_0( \widehat{2\lambda(1)},r) \ ,
			\end{align*}
		where $\widehat{2\lambda(1)} = e^{\norm{\overline{\beta}^0}_{\infty,1}} (2\lambda(1))$. 
		Now, since we have chosen $n$ such that $\frac{\boldsymbol{D}}{n} \le \frac{e^{-2 \rho_{\text{max}}\norm{r}_{\infty,2}^2}}{2^{10} (2\lambda(1)) \big(\norm{r}_{\infty,2}^2+2 \big)}$, 
			\begin{align*}
				\frac{\widehat{\boldsymbol{D}}}{n} 
					\le e^{\norm{\overline{\beta}^0}_{\infty,1}} \frac{\boldsymbol{D}}{n} 
					\le  \frac{ e^{\norm{\overline{\beta}^0}_{\infty,1}} e^{-2 \rho_{\text{max}}\norm{r}_{\infty,2}^2} }{2^{10}  \big( 2\lambda(1) \big) \big(\norm{r}_{\infty,2}^2 + 2 \big)}
					&=  \frac{ e^{2\norm{\overline{\beta}^0}_{\infty,1}} e^{-2 \rho_{\text{max}}\norm{r}_{\infty,2}^2} }{2^{10} \widehat{2\lambda(1)} \big(\norm{r}_{\infty,2}^2 + 2 \big)} \\
					&\le \frac{1}{2^{10} \widehat{2\lambda(1)} \big(\norm{r}_{\infty,2}^2 + 2 \big)} 
						= \epsilon_0( \widehat{2\lambda(1)},r)
			\end{align*}
		because $\norm{\overline{\beta}_{\infty,1}^0} \le \rho(0)\norm{r}_{\infty,2}^2$ and $\rho(0) \le \rho_{\text{max}}$ by construction. So we indeed get a solution $(\widehat{Y^1},\widehat{\widetilde{N^1}},\widehat{K^1})$ and doing the reverse transforms gives a solution $S^1=(Y^1,N^1,K^1)$ to the reflected BSDE with the linear terms.
		
		\paragraph*{}
		For $i=2 \ldots n$, we have a solution $\overline{S}^{i-1}$ to the reflected BSDE (\ref{equation-RBSDE.of.reference}) with parameters $(f-\alpha+\overline{\alpha}^{i-1},\nu,g,\overline{\xi}^{i-1},L)$ and want to construct the appropriate perturbation $S^i$. We simply do the same computations as in proposition \ref{proposition-smallperturbationtoaRBSDE}, using the integrating factor $\overline{B}^{i-1}=e^{\int \overline{\beta}^{i-1} dC}$ where $\overline{\beta}^{i-1} = f_y(\cdot,\overline{Y}^{i-1},\overline{Z}^{i-1}\sigma)$, and the change of measure $\frac{dQ}{dP}=\Exp( \int \overline{\gamma}^{i-1}\sigma^{-1}dC + \overline{\nu}^{i-1} )$ where $\overline{\gamma}^{i-1}=f_z(\cdot,\overline{Y}^{i-1},\overline{Z}^{i-1}\sigma)$ and $\overline{\nu}^{i-1} = \nu + \int 2 g d(\overline{N}^{i-1})^\perp$. 

		Because we know, by the a priori estimate on solutions of the reflected BSDE, that 
			\begin{align*}
				\norm{\overline{Y}^{i-1}}_{\S^\infty} \le 
									F\left(\Norm{\frac{(i-1)}{n}\,\xi}_\infty,\Norm{\frac{(i-1)}{n}\,\alpha}_{\infty,1}\right) \le F\big(\norm{\xi}_\infty,\norm{\alpha}_{\infty,1}\big),
			\end{align*} 
		we know that $\norm{\overline{\beta}^{i-1}}_{\infty,1} \le \rho(\norm{\overline{Y}^{i-1}}_{\S^\infty}) \norm{r}_{\infty,2}^2 \le \rho_{\text{max}} \norm{r}_{\infty,2}^2$. 
		Therefore, just as above, we find that the size $\widehat{\boldsymbol{D}^i}$ of the data is indeed small enough, and so we can construct the perturbation $S^i$.
	\end{proof}	

As can be seen from the proof above, the key to the generalization is to have an a priori estimate $\norm{Y}_{\S^\infty} \le F(\norm{\xi}_\infty,\norm{\alpha}_{\infty,1})$ for some growth function $F$.

\paragraph{Case of $f$ monotone and with arbitraty growth in $y$.}\ \\
We can also generalize the result of theorem \ref{proposition-existencetheorem} to the case where $f$ is so-called monotonous (or 1-sided Lipschitz) in $y$, with arbitrary growth.

	\begin{theorem}
		Consider a set of parameters $(f,\nu,g,\xi,L)$ satisfying the assumptions of theorem \ref{proposition-existencetheorem}, but with $\rho, \rho',\lambda$ in $(\mathbf{A_{der}})$ being growth function instead of constants.
				
		Further assume that $\abs{f(t,y,0)} \le \abs{f(t,0,0)} + \varphi(y)$ for a growth function $\varphi$ and that there exists a constant $\mu$ such that for all $y,y',z,s,\omega$,
			\begin{align*}
				(y'-y)\big( f(s,y',z) - f(s,y,z) \big) \le \mu \, r_s^2 \, \abs{y'-y}^2
			\end{align*}

		Then there exists a solution $(Y,N,K)$ to the reflected BSDE (\ref{equation-RBSDE.of.reference}) with parameters $(f,\nu,g,\xi,L)$. 
	\end{theorem}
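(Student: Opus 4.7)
The plan is to reduce the problem to the same perturbation procedure that was used in the previous theorem on slightly-superlinear growth. As the discussion at the start of section \ref{section-generalisation} makes clear, the only ingredient needed to run that procedure for a new class of coefficients is an a priori bound of the form $\norm{Y}_{\S^\infty} \le F\big(\norm{\xi}_\infty, \norm{\alpha}_{\infty,1}\big)$ for some growth function $F$, where $\alpha = f(\cdot,0,0)$. Once this bound is in hand, one sets $\rho_{\max} = \rho\big(F(\norm{\xi}_\infty, \norm{\alpha}_{\infty,1})\big)$, chooses $n$ large enough that $\boldsymbol{D}/n \le \epsilon_0(\rho_{\max}, 2\lambda(1), r)$, and produces $S^1$ via proposition \ref{proposition-smallRBSDE} and successively $\overline{S}^2, \ldots, \overline{S}^n$ via proposition \ref{proposition-smallperturbationtoaRBSDE}, exactly as in the proof of the previous theorem. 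Applying the a priori bound to each intermediate $\overline{S}^{i-1}$ gives $\norm{\overline{\beta}^{i-1}}_{\infty,1} \le \rho_{\max} \norm{r}_{\infty,2}^2$, so the admissible perturbation size stays bounded below by a fixed constant and the procedure terminates in $n$ steps. The difference with the previous case is that we can no longer import the Kobylanski-type bound coming from the Osgood-growth assumption on $\varphi$, and the a priori bound must be extracted directly from the monotonicity.

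To produce such a bound, I would proceed by an exponential transform in the $y$ variable. After the translation reducing to $L \le 0$ (Step 1 of the proof of theorem \ref{proposition-existencetheorem}), bound $Y^+$ and $Y^-$ separately. For $Y^+$, apply It\^o--Tanaka to $\Psi(Y_t) = A^{-2}(e^{A Y_t^+} - 1 - A Y_t^+)$ with a constant $A$ large enough (in terms of $\lambda(0)$, $\norm{g}_\infty$ and $\norm{\nu}_{BMO}$) that the second-order term $\tfrac12 \Psi''(Y_s) d\bracket{N}_s = \tfrac12 e^{A Y_s^+} d\bracket{N}_s$ strictly dominates all the quadratic-in-$Z\sigma$ contributions arising from $\Psi'(Y_s)\, f(s,0,Z_s\sigma_s)\, dC_s$ (controlled by $(\mathbf{A_{qg}})$ at $y=0$) and from $\Psi'(Y_s)\, g_s\, d\bracket{N^\perp}_s$. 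On $\{Y_s > 0\}$ the reflection term $dK_s$ vanishes because $L_s \le 0$. The remaining drift contribution $\Psi'(Y_s)\,[f(s,Y,Z\sigma) - f(s,0,Z\sigma)]$ is dominated using the monotonicity hypothesis, which, since $\Psi' \ge 0$ and $Y > 0$, yields a term bounded by a multiple of $r_s^2 \Psi(Y_s)$ after the usual convexity comparison between $Y\Psi'(Y)$ and $\Psi(Y)$. Taking conditional expectation at a suitable stopping time and using $\int_0^T r_s^2 dC_s \le \norm{r}_{\infty,2}^2$, a Gronwall-type closure yields a deterministic bound on $\norm{Y^+}_\infty$ depending only on $\norm{\xi^+}_\infty$, $\norm{\alpha}_{\infty,1}$ and the fixed framework data. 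The bound on $Y^-$ follows by applying the same argument to $-Y$; there the sign of $dK$ is favourable and causes no difficulty.

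The main obstacle is clearly this a priori bound. Quadratic-in-$Z\sigma$ growth is precisely what forced the fixed-point approach to require small data, and the only reason one can close a global estimate here is the interplay between the convexity of the exponential transform and the monotonicity: the $\Psi''$ correction has to absorb all the quadratic-in-$Z\sigma$ terms, while $\Psi'$ acting on the monotonicity error must stay comparable to $\Psi$ itself, so that Gronwall can be applied. Carefully matching these scales amounts to a reformulation of the Briand--Lepeltier--San Martin estimate \cite{BriandAndLepeltierAndSanMartin} and its reflected analogue by Xu \cite{Xu}, adapted to the general continuous-martingale setting of this paper. Once $F$ is produced, the perturbation procedure itself is a mechanical reprise of the proof of the previous theorem with $\rho_{\max}$ updated accordingly, and introduces no new difficulty.
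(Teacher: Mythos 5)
Your overall architecture is exactly the paper's: everything reduces to producing an a priori bound $\norm{Y}_{\S^\infty}\le F\big(\norm{\xi}_\infty,\norm{\alpha}_{\infty,1}\big)$, after which one sets $\rho_{\max}=\rho\big(F(\norm{\xi}_\infty,\norm{\alpha}_{\infty,1})\big)$ and reruns the perturbation procedure verbatim. The paper gets $F$ by first linearizing the drift in $z$ and in $N^\perp$ through a change of measure with density $\Exp\big(\int F_z\sigma^{-1}dM+\nu'\big)$ (legitimate since $F_z\in L^2_{BMO}$ once $N\in BMO$), so that no quadratic term survives; It\^o applied to $\abs{Y}^2$, the inequality $Y\big(f(s,Y,0)-f(s,0,0)\big)\le\mu r_s^2 Y^2$, the sign $Y\,dK=L\,dK\le 0$ after the translation to $L\le 0$, and Gronwall then give $\norm{Y}_{\S^\infty}^2\le 2e^{4\mu\norm{r}_{\infty,2}^2}\big(\norm{\xi}_\infty^2+2\norm{\alpha}_{\infty,1}^2\big)$.

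Your alternative derivation of the bound has a genuine gap at the closing step. With the constant-exponent test function $\Psi(y)=A^{-2}(e^{Ay^+}-1-Ay^+)$, monotonicity gives $\Psi'(Y_s)\big[f(s,Y_s,Z_s\sigma_s)-f(s,0,Z_s\sigma_s)\big]\le\mu r_s^2\,Y_s\Psi'(Y_s)$ on $\{Y_s>0\}$, and you propose to dominate $Y\Psi'(Y)$ by a multiple of $\Psi(Y)$ and conclude by Gronwall. That comparison is false: $y\Psi'(y)=A^{-1}y(e^{Ay}-1)$ while $\Psi(y)\sim A^{-2}e^{Ay}$, so the ratio behaves like $Ay$ and is unbounded; no inequality of the form $\Psi(Y_t)\le a+C\int\Psi(Y_s)\,r_s^2\,dC_s$ is available, which is precisely why exponential-transform estimates absorb linear growth in $y$ through a time-dependent exponent rather than through Gronwall. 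Two repairs work: multiply $Y$ by the integrating factor $e^{\mu\int_0^\cdot r_u^2dC_u}$ first, which reduces the monotonicity constant to $0$ so the offending term becomes nonpositive and can be discarded (equivalently, take $A_s=Ae^{\mu\int_s^Tr_u^2dC_u}$); or linearize in $z$ first as the paper does, after which the plain quadratic test function suffices. A secondary issue in your sketch is the term $\Psi'(Y_s)\,d\langle\nu,N^\perp\rangle_s$: Kunita--Watanabe leaves a residual $\Psi'(Y_s)\,d\langle\nu\rangle_s$ that cannot be controlled without already bounding $\sup_s\Psi'(Y_s)$, i.e.\ the quantity you are estimating; putting $\nu$ into the Girsanov density removes it cleanly.
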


As remarked above, it is enough to have an a priori estimate for $\norm{Y}_{\S^\infty}$. One can use the one obtained in the proof of theorem 3.1 in \cite{Xu}. Alternatively, having argued that it is enough to study the case where the obstacle is negative, one can linearize the driver in the $N$ variable, and do a measure change. Then, using Itô with $\abs{\,\cdot\, }^2$ to take advantage of the monotonicity condition, one could conclude via standard estimations that 
	\begin{align*}
		\norm{Y}_{\S^\infty}^2 \le 2 e^{4\mu \norm{r}_{\infty,2}^2} \big( \norm{\xi}_\infty^2 + 2 \norm{\alpha}_{\infty,1}^2) =: F\big( \norm{\xi}_\infty,\norm{\alpha}_{\infty,1} \big)^2
	\end{align*}

%%%%%%%%%%%%%%%%%%%%%%%%%%%%%%%%%%%%%%%%%%%%%%%%%%%%%%%%%%%%%%%%%%%%%%%%%%%%%%%%%%%%%%%%%%%%%%%%%%%%%%%%%%%%%%%%%%%%%%%%%%%%%%%%%%%%%%%%%%%%%%%%%%%
%%%%%%%%%%%%%%%%%%%%%%%%%%%%%%%%%%%%%%%%%%%%%%%%%%%%%%%%%%%%%%%%%%%%%%%%%%%%%%%%%%%%%%%%%%%%%%%%%%%%%%%%%%%%%%%%%%%%%%%%%%%%%%%%%%%%%%%%%%%%%%%%%%%
%%%%%%%%%%%%%%%%%%%%%%%%%%%%%%%%%%%%%%%%%%%%%%%%%%%%%%%%%%%%%%%%%%%%%%%%%%%%%%%%%%%%%%%%%%%%%%%%%%%%%%%%%%%%%%%%%%%%%%%%%%%%%%%%%%%%%%%%%%%%%%%%%%%
\section{Conclusion.}

We obtained the standard well-posedness results (comparison, uniqueness, existence) for a general class of quadratic reflected BSDEs driven by a continous martingale. 

We also proved under minimal assumptions the special comparison theorem (which allows to compare the increasing processes of two solutions), using a new proof which does not rely on comparison for BSDEs (which requires regularity assumptions on $f$) but is more in the spirit of the Snell-envelopes view of reflected BSDEs.

Finally, we also showed a local Lipschitz estimate in $BMO$ for the martingale part of the solution, which improves on the previously known regularity. The idea is somehow to bootstrap an existing, weaker regularity result.

\paragraph*{}
For the existence of a solution, we first worked under the standard assumption that $f$ is Lipschitz in $y$ and adapted the technique introducted by Tevzadze in \cite{Tevzadze}. 
The perturbation procedure required a special attention here since perturbations to a reflected BSDE don't satisfy a reflected BSDE. This is linked to the fact that the underlying problem for reflected BSDEs is a Snell envelope problem, and that if $L$ and $L'$ are obstacles, the Snell envelope of $L+L'$ is not the sum of that of $L$ and that of $L'$.
The problems with the difference of increasing processes not being increasing can be avoided if one does not perturb the obstacle during the procedure. We therefore applied first a transformation to the reflected BSDE to be led to study only the case where $L \le 0$. We expect that this approach would also work for doubly reflected BSDEs if they can be reduced to studying the case where the lower obstacle $L$ is negative and the upper obstacle $U$ is positive.

However, this technique would probably not work in the case of an unbounded terminal condition. In that setting indeed, the martingale part of the solution is not in $BMO$, a property which is well put to contribution here. 

We also showed the existence of solutions in the case where $f$ has slightly-superlinear growth in $y$ and in the case where $f$ is monotone with arbitrary growth in $y$. For this, and unlike in the case where $f$ is a Lipschitz function of $y$, we need to use an a priori bound to guarantee that the pertubation procedures ends in finitely many perturbations (and can be carried uniformly). Therefore, the same technique can be used to construct solutions in these three cases.

\paragraph{Acknowledgements.} We would like to thank the reviewers for their helpful remarks and comments which help improve the first version of this manuscript. We also thank Sam Cohen for helpful comments and discussions. \\
This research was supported by the Engineering and Physical Sciences Research Council (UK) via grant EP/P505216/1 and by the Oxford-Man Institute.

%%%%%%%%%%%%%%%%%%%%%%%%%%%%%%%%%%%%%%%%%%%%%%%%%%%%%%%%%%%%%%%%%%%%%%%%%%%%%%%%%%%%%%%%%%%%%%%%%%%%%%%%%%%%%%%%%%%%%%%%%%%%%%%%%%%%%%%%%%%%%%%%%%%
%%%%%%%%%%%%%%%%%%%%%%%%%%%%%%%%%%%%%%%%%%%%%%%%%%%%%%%%%%%%%%%%%%%%%%%%%%%%%%%%%%%%%%%%%%%%%%%%%%%%%%%%%%%%%%%%%%%%%%%%%%%%%%%%%%%%%%%%%%%%%%%%%%%
%%%%%%%%%%%%%%%%%%%%%%%%%%%%%%%%%%%%%%%%%%%%%%%%%%%%%%%%%%%%%%%%%%%%%%%%%%%%%%%%%%%%%%%%%%%%%%%%%%%%%%%%%%%%%%%%%%%%%%%%%%%%%%%%%%%%%%%%%%%%%%%%%%%
\paragraph*{}
\ \\ 
{\small Arnaud Lionnet, \\ 18th October 2013 \ (version 2). \\ Oxford-Man Institute, Eagle House, OX2 6ED, Oxford, United Kingdom. } \\

%%%%%%%%%%%%%%%%%%%%%%%%%%%%%%%%%%%%%%%%%%%%%%%%%%%%%%%%%%%%%%%%%%%%%%%%%%%%%%%%%%%%%%%%%%%%%%%%%%%%%%%%%%%%%%%%%%%%%%%%%%%%%%%%%%%%%%%%%%%%%%%%%%%
%%%%%%%%%%%%%%%%%%%%%%%%%%%%%%%%%%%%%%%%%%%%%%%%%%%%%%%%%%%%%%%%%%%%%%%%%%%%%%%%%%%%%%%%%%%%%%%%%%%%%%%%%%%%%%%%%%%%%%%%%%%%%%%%%%%%%%%%%%%%%%%%%%%
%%%%%%%%%%%%%%%%%%%%%%%%%%%%%%%%%%%%%%%%%%%%%%%%%%%%%%%%%%%%%%%%%%%%%%%%%%%%%%%%%%%%%%%%%%%%%%%%%%%%%%%%%%%%%%%%%%%%%%%%%%%%%%%%%%%%%%%%%%%%%%%%%%%
%%%%%Bibliography
\bibliographystyle{plain}
%\bibliography{Bibliography.v2}
%\nocite{*}		%This ensures that every item in Bibliography.bib that is not cited in the text is nevertheless included in the bibligraphy generated by Bibtex.

%%%%%%%%%%%%%%%%%%%%%%%%%%%%%%%%%%%%%%%%%%%%%%%%%%%%%%%%%%%%%%%%%%%%%%%%%%%%%%%%%%%%%%%%%%%%%%%%%%%%%%%%%%%%%%%%%%%%%%%%%%%%%%%%%%%%%%%%%%%%%%%%%%%
%%%%%%%%%%%%%%%%%%%%%%%%%%%%%%%%%%%%%%%%%%%%%%%%%%%%%%%%%%%%%%%%%%%%%%%%%%%%%%%%%%%%%%%%%%%%%%%%%%%%%%%%%%%%%%%%%%%%%%%%%%%%%%%%%%%%%%%%%%%%%%%%%%%
%%%%%%%%%%%%%%%%%%%%%%%%%%%%%%%%%%%%%%%%%%%%%%%%%%%%%%%%%%%%%%%%%%%%%%%%%%%%%%%%%%%%%%%%%%%%%%%%%%%%%%%%%%%%%%%%%%%%%%%%%%%%%%%%%%%%%%%%%%%%%%%%%%%
%%%%%%%%%%%%%%%%%%%%%%%%%%%%%%%%%%%%%%%%%%%%%%%%%%%%%%%%%%%%%%%%%%%%%%%%%%%%%%%%%%%%%%%%%%%%%%%%%%%%%%%%%%%%%%%%%%%%%%%%%%%%%%%%%%%%%%%%%%%%%%%%%%%
%%%%%%%%%%%%%%%%%%%%%%%%%%%%%%%%%%%%%%%%%%%%%%%%%%%%%%%%%%%%%%%%%%%%%%%%%%%%%%%%%%%%%%%%%%%%%%%%%%%%%%%%%%%%%%%%%%%%%%%%%%%%%%%%%%%%%%%%%%%%%%%%%%%
\end{document}